\newcolumntype{L}[1]{>{\raggedright\let\newline\\\arraybackslash\hspace{0pt}}m{#1}}
\newcolumntype{C}[1]{>{\centering\let\newline\\\arraybackslash\hspace{0pt}}m{#1}}
\theoremstyle{definition}
\newtheorem{definition}{Definition}[section]
\theoremstyle{plain}
\newtheorem{lemma}{Lemma}[section]
\newtheorem{theorem}{Theorem}
\theoremstyle{remark}
\newtheorem{remark}{Remark}
\newcommand{\oset}[3][0ex]{%
  \mathrel{\mathop{#3}\limits^{
    \vbox to#1{\kern-2\ex@
    \hbox{$\scriptstyle#2$}\vss}}}}
\title{A qualitative study of an anaerobic food-web reveals the importance of hydrogen for microbial stability}
\author{
Szymon Sobieszek \\
  Department of Mathematics \& Statistics\\
  McMaster University\\
  Hamilton, ON Canada \\
  \texttt{sobieszs@mcmaster.ca} \\
\And
 Gail S. K. Wolkowicz \\
  Department of Mathematics \& Statistics\\
  McMaster University\\
  Hamilton, ON Canada \\
  \texttt{wolkowic@mcmaster.ca} \\
  \And
    Matthew J. Wade \\
    School of Engineering\\
    Newcastle University, Newcastle-upon-Tyne \\
    United Kingdom \\
  \texttt{matthew.wade@ncl.ac.uk} \\
}
\begin{document}
\maketitle

\begin{abstract}
The mathematical analysis of a three-tiered food-web describing anaerobic chlorophenol mineralisation has suggested the emergence of interesting dynamical behaviour through its specific ecological interactions, which include competition, syntrophy and product inhibition. Previous numerical analyses have revealed the possibility for a Hopf bifurcation occurring through the interior equilibrium and the role of extraneous substrate inputs in both mitigating the emergence of periodic solutions and expanding the desired stable positive steady-state, where full mineralisation occurs. Here we show that, for a generalised model, the inflow of multiple substrates results in greater dynamical complexity and prove the occurrence of a supercritical Hopf bifurcation resulting from variations in these operating parameters. Further, using numerical estimation, we also show that variations in the dilution rate can lead to Bogdanov-Takens and Bautin bifurcations. Finally, we are able to show apply persistence theory for a range of parameter sets to demonstrate unique persistence in the cases where chlorophenol and hydrogen are extraneously added to the system, mirroring recent applied studies highlighting the role of hydrogen in maintaining stable anaerobic microbial communities.
\end{abstract}

\keywords{Chlorophenol mineralisation; Bifurcation analysis; Hopf bifurcation; Hydrogen}

\section{Introduction}
A mathematical model of the complete anaerobic mineralisation of a generic monochlorophenol isomer (C$_6$H$_4$ClOH) by a canonical food-web has recently been described \cite{wade2016}. The system comprises three microbial species (chlorophenol degrader, phenol degrader, hydrogenotrophic methanogen) whose interactions are summarised as follows:

\begin{itemize}
    \item Reductive dehalogenation of chlorophenol in the presence of hydrogen by the chlorophenol degrader producing phenol \cite{mazur2003};
    \item Phenol is mineralised to acetate and hydrogen via the benzoyl-CoA pathway or a caproate intermediary \cite{leven2012};
    \item Anaerobic acetogenic (acetate producing) processes are known to be endergonic  (the reaction results in a net loss of energy to the system). The production of hydrogen can lead to thermodynamic constraints, or inhibition, if its partial pressure is high enough. In other words, the reaction becomes decreasingly exergonic as more hydrogen is produced until it ceases to be thermodynamically favourable \cite{schink97,grosskopf16}. Hydrogen scavengers such as the methanogen form a syntrophic partnership with the phenol degrader by maintaining the hydrogen partial pressure  at concentrations low enough for the mineralisation reaction to proceed;
    \item Given that the chlorophenol degrader may also act as a syntrophic partner with the phenol degrader, a competitive interaction between the two hydrogen utilisers occurs. This positive and negative feedback loops reframes the ecological network from being a simple food-chain to a more complex food-web that allows for the possibility of periodicity. This additionally leads to the possibility of the system reducing to a self-sustaining two-species network in which the chlorophenol degrader acts as the syntrophic partner to the phenol degrader. 
\end{itemize}

For reference, the food-web is presented diagrammatically in \cite{wade2016} (Figure 8) and \cite{sari2017} (Figure 1). The model is a simplified representation of the system at the population level, ignoring metabolic intermediates and dead-end products such as methane, which does not contribute to the process dynamics. Acetoclastic methanogenesis, the conversion of acetate to methane, is also omitted from the model. 

Hydrogen, however, has been shown to play an important role in stability of anaerobic microbial communities through the effects of inhibition and competition \cite{bassani2015,chen2019,smith2019}. Given that external hydrogen addition will maintain the methanogen population (no washout when the methanogen growth rate is greater than the combined dilution and decay/maintenance rates) under a wider operating parameter regime (chlorophenol inflow and dilution rate) \cite{wade2016}, a global analysis of the model can provide deeper insights into the ecological role of hydrogen through its association with community stability and criticality of the Hopf bifurcation.

Here, we focus on the mathematical analysis of the model, extending the work reported in the literature. For example, an analytical approach was taken to characterise the existence and stability of the system equilibria with and without inclusion of a microbial decay term using a general representation of the species growth functions \cite{sari2017}. With no decay, local stability and the conditions giving rise to asymptotic coexistence of all three species have been shown analytically, where the possibility of periodic orbits are also not excluded \cite{elhajji2017}. However, numerical analysis has suggested the presence of a Hopf bifurcation emerging through the positive steady-state, with the concentration of influent chlorophenol as the bifurcating parameter \cite{sari2017}.

In this work, we extend the analysis of the model providing a proof of the existence, uniqueness and stability of six identified equilibria with the addition of only chlorophenol, and the more general case where all three substrates as external inputs to the system. The procedure we use allows to identify the sufficient conditions for the emergence of a Hopf bifurcation in those inflow concentration parameters. We are also able to prove that the dimensionless model is uniformly persistent, a new result for the system.

\section{The model revisited}

We first present concisely the original chemostat model using identical scaling to that given by \cite{wade2016}

\begin{align}\label{eq:system_scaled}
\begin{split}
\begin{cases}
    x_0' &= -\alpha x_0 + \mu_0\left(s_0,s_2 \right)x_0 -k_{_A}x_0, \\
    x_1' &= -\alpha x_1 + \mu_1\left(s_1,s_2 \right)x_1 -k_{_B}x_1,\\
    x_2' &= -\alpha x_2 + \mu_2\left(s_2 \right)x_2 - k_{_C}x_2,\\
    s_0' &= \alpha\left(u_f - s_0\right) - \mu_0\left(s_0,s_2\right)x_0,\\
    s_1' &= \alpha (u_g-s_1) +\omega_0\mu_0\left(s_0,s_2\right)x_0 -\mu_1 \left(s_1,s_2\right)x_1,\\
    s_2' &= \alpha (u_h-s_2) -\omega_2\mu_0\left(s_0,s_2\right)x_0 + \omega_1\mu_1\left(s_1,s_2\right)x_1 -\mu_2\left(s_2\right)x_2,
\end{cases}
\end{split}
\end{align}

with

\begin{align}
    \alpha &= \frac{D}{k_\text{m,ch}Y_\text{ch}},\\
    u_f &= \frac{S_\text{ch,in}}{K_\text{S,ch}}, \quad u_g = \frac{S_\text{ph,in}}{K_\text{S,ph}}, \quad u_h = \frac{S_\text{H$_2$,in}}{K_\text{S,H$_2$}},\\
    \omega_0 &= \frac{K_\text{S,ch}}{K_\text{S,ph}}\frac{224}{208}\left(1-Y_\text{ch}\right), \quad \omega_1 = \frac{K_\text{S,ph}}{K_\text{S,H$_2$}}\frac{32}{224}\left(1-Y_\text{ph}\right), \quad \omega_2 = \frac{16}{208}\frac{K_\text{S,ch}}{K_\text{S,H$_2$}},\\
    \phi_1 &= \frac{k_\text{m,ph}Y_\text{ph}}{k_\text{m,ch}Y_\text{ch}}, \quad \phi_2 = \frac{k_\text{m,H$_2$}Y_\text{H$_2$}}{k_\text{m,ch}Y_\text{ch}},\\
    K_P &= \frac{K_\text{S,H$_2$,c}}{K_\text{S,H$_2$}}, \quad K_I = \frac{K_\text{S,H$_2$}}{K_{I,\text{H$_2$}}},\\
    k_{_A} &= \frac{k_\text{dec,ch}}{k_\text{m,ch}Y_\text{ch}}, \quad k_{_B} = \frac{k_\text{dec,ph}}{k_\text{m,ch}Y_\text{ch}}, \quad k_{_C} = \frac{k_\text{dec,H$_2$}}{k_\text{m,ch}Y_\text{ch}}.
\end{align}

\begin{align}
\mu_0\left(s_0,s_2\right) &= \frac{s_0}{1+s_0}\frac{s_2}{K_P+s_2}, \quad \mu_1\left(s_1,s_2\right)=\frac{\phi_1 s_1}{1+s_1}\frac{1}{1+K_I s_2}, \quad \mu_2\left(s_2\right)=\frac{\phi_2 s_2}{1+s_2},\label{eq:growth_fcns}
\end{align}

where $\alpha$ is the dilution rate, $u_f, u_g, u_h$ are the chlorophenol, phenol and hydrogen inflow concentrations, respectively, and $k_A, k_B, k_C$ are the decay (or maintenance) terms. These are scaled to be dimensionless, as are the other parameters using the scaling provided by \cite{wade2016}. Briefly, $k_\mathrm{m,\cdot}$ are the specific growth rates, $K_\mathrm{S,\cdot}$ are the half-saturation coefficients, $Y_\mathrm{\cdot}$ are the substrate yield coefficients, $K_\mathrm{I,H_2}$ is the kinetic inhibition constant of hydrogen on the phenol degrader, and $k_\mathrm{dec,\cdot}$ are the unscaled decay terms. The numeric values indicate the stoichiometric coefficients given in terms of units of Chemical Oxygen Demand rather than molarity, as is common for environmental engineering models. The $\mu_n(\cdot), n = {0,1,2}$ are the species growth functions described by double Monod, Monod with product inhibition, and Monod kinetics, respectively. Subscripts $_\mathrm{ch}, _\mathrm{ph}, _\mathrm{H_2}$ relate to the chlorophenol degrader, phenol degrader, and methanogen, respectively.

For numerical bifurcation analysis given in Section \ref{subsec:bif_diag}, we consider the same parameter values as provided in the original work, as shown in Table \ref{table0}.

\begin{table}[ht]
\centering
\begin{tabular}{lll}
\hline
Parameters & Value                 \\ \hline
$\omega_0$          & 0.1854            \\
$\omega_1$          & 1656.69         \\
$\omega_2$            & 163.08         \\
$\phi_1$          & 1.8875            \\
$\phi_2$          & 3.8113       \\
$K_P$            & 0.04          \\
$K_I$    & 7.1429               \\ \hline
\end{tabular}
\caption{Parameter regimes for the system \eqref{eq:system_scaled}.}
\label{table0}
\end{table}

\section{Reduction of the model}\label{sec:reduction}
We are able to obtain many theoretical results assuming general forms of the  growth functions provided we assume the death rates of the microbial populations are insignificant compared to the dilution rate. We thus consider the following system that is identical to system \eqref{eq:system_scaled}, except that we assume $k_{i}=0$, 
$i\in\{A,B,C\}$:
\begin{align}\label{eq:system}
\begin{split}
\begin{cases}
    x_0' &= -\alpha x_0 + \mu_0\left(s_0,s_2 \right)x_0, \\
    x_1' &= -\alpha x_1 + \mu_1\left(s_1,s_2 \right)x_1,\\
    x_2' &= -\alpha x_2 + \mu_2\left(s_2 \right)x_2,\\
    s_0' &= \alpha\left(u_f - s_0\right) - \mu_0\left(s_0,s_2\right)x_0,\\
    s_1' &= \alpha (u_g-s_1) +\omega_0\mu_0\left(s_0,s_2\right)x_0 -\mu_1 \left(s_1,s_2\right)x_1,\\
    s_2' &= \alpha (u_h-s_2) -\omega_2\mu_0\left(s_0,s_2\right)x_0 + \omega_1\mu_1\left(s_1,s_2\right)x_1 -\mu_2\left(s_2\right)x_2,
\end{cases}
\end{split}
\end{align}
\[
x_i\left(0\right)\geq 0, \quad s_i\left(0\right)\geq 0, \quad i\in\left\{1,2,3\right\}.
\]
We assume that $\mu_0\left(s_0,s_2\right)$, $\mu_1\left(s_1,s_2\right)$, $\mu_2\left(s_2\right)$ are $\mathcal{C}^1$ functions that satisfy the following general conditions:
\begin{itemize}
    \item For all $s_0\geq 0$ and $s_2\geq 0$, $\mu_0\left(0,s_2\right)=0$, $\mu_0\left(s_0,0\right)=0$. As a consequence, $\partial_{s_0}\mu_0\left(s_0,0\right)=0$, $\partial_{s_2}\mu_0\left(0,s_2\right)=0$. Thus we assume that the chlorophenol degrader cannot grow in the absence of either chlorophenol or hydrogen;
    \item For all $s_0 >0$ and $s_2 >0$, $\partial_{s_0} \mu_0\left(s_0,s_2\right)>0$, $\partial_{s_2} \mu_0\left(s_0,s_2\right)>0$. Thus we assume that the chlorophenol degrader grows on both chlorophenol and hydrogen;
    \item For all $s_2\geq 0$ and $s_1\geq 0$, $\mu_1\left(0,s_2\right)=0$, $\partial_{s_2}\mu_1\left(0,s_2\right)=0$. Thus we assume that the phenol degrader cannot grow in the absence of phenol;
    \item For all $s_1>0$ and $s_2>0$, $\partial_{s_1}\mu_1\left(s_1,s_2\right)>0$, $\partial_{s_2}\mu_1\left(s_1,s_2\right)<0$. Thus we assume that the supply of phenol results in growth of the phenol degrader, and that hydrogen inhibits its growth;
    \item $\mu_2\left(0\right)=0$ and $\mu_2'\left(s_2\right)>0$ for all $s_2>0$. Thus we assume that the mathanogen cannot grow without the presence of hydrogen, and that increasing the supply of hydrogen results in faster growth of the methanogen.
\end{itemize}
We use the prototypes $\mu_0$, $\mu_1$, and $\mu_2$ defined in \eqref{eq:growth_fcns}, which satisfy these conditions, when we are able to prove results in general, and when providing numerical simulations or bifurcation diagrams.

We now prove a lemma that we will use to show global well-posedness of system \eqref{eq:system}.
\begin{lemma}\label{lemma:pos_bound}
All solutions of system \eqref{eq:system} with positive initial conditions remain positive and bounded for all positive times.

If $x_i\left(0\right)=0$, $i\in\left\{1,2,3\right\}$, then $x_i\left(t\right)=0$ for all $t\geq 0$.
\end{lemma}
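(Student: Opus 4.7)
I break the claim into three steps: (i) the invariance identity $x_i(0)=0\Rightarrow x_i\equiv 0$ together with strict positivity of each biomass from strictly positive data; (ii) forward-invariance of the closed non-negative orthant for the $s_j$; (iii) a linear a-priori estimate that gives boundedness and, consequently, global existence on $[0,\infty)$.

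For (i) I read each biomass equation as a scalar linear ODE $x_i'=x_i\bigl[-\alpha+\mu_i(\cdots)\bigr]$ along any local solution, so that
\[
x_i(t)=x_i(0)\exp\!\left(\int_0^t\bigl[-\alpha+\mu_i(\cdots)\bigr]\,d\tau\right)
\]
on the maximal interval of existence. This preserves the sign of $x_i$, giving both $x_i(0)=0\Rightarrow x_i(t)\equiv 0$ and $x_i(0)>0\Rightarrow x_i(t)>0$. For (ii) I invoke the standard tangent-cone (Nagumo) criterion: it suffices to verify $s_j'\ge 0$ on each boundary face $\{s_j=0\}$ of $\mathbb{R}^6_{\ge 0}$. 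Using the assumed boundary behaviour $\mu_0(0,s_2)=\mu_0(s_0,0)=0$, $\mu_1(0,s_2)=0$, $\mu_2(0)=0$, together with the non-negativity of every other component, one obtains $s_0'=\alpha u_f\ge 0$, $s_1'=\alpha u_g+\omega_0\mu_0 x_0\ge 0$, and $s_2'=\alpha u_h+\omega_1\mu_1(s_1,0)x_1\ge 0$ on the respective faces, with $\mu_1(s_1,0)\ge 0$ following from continuity and monotonicity in $s_1$.

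For (iii) I seek positive weights $a_i,b_j$ so that $M:=a_0x_0+a_1x_1+a_2x_2+b_0s_0+b_1s_1+b_2s_2$ satisfies $M'=\alpha c-\alpha M$ for some constant $c$. Collecting the coefficients of $\mu_0x_0$, $\mu_1x_1$, $\mu_2x_2$ in $M'$ imposes
\[
a_2=b_2,\qquad a_1=b_1-\omega_1 b_2,\qquad a_0=b_0-\omega_0 b_1+\omega_2 b_2.
\]
Choosing $b_2=1$, then $b_1>\omega_1$, and finally $b_0$ large enough that $a_0>0$ makes every weight strictly positive. The remaining linear contributions give $c=b_0u_f+b_1u_g+b_2u_h$, so $M(t)=c+\bigl(M(0)-c\bigr)e^{-\alpha t}$; hence $M$ is uniformly bounded, and positivity of the weights transfers this to a uniform bound on every individual component. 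Boundedness precludes finite-time blow-up, so the maximal interval is $[0,\infty)$.

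The only subtle point is making all six weights positive in the presence of the large cross-coupling $\omega_1$; this is why I order the choice as ``$b_2\to b_1\to b_0$'' (i.e., working up the trophic chain). No further obstacle arises.
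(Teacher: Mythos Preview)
Your argument is correct, but it diverges from the paper's proof in two places worth noting. For the substrates, the paper uses a first-hitting-time argument (assume a first $\bar t$ with $s_j(\bar t)=0$ and derive $s_j'(\bar t)>0$, or invoke uniqueness when the right-hand side vanishes), which yields \emph{strict} positivity $s_j(t)>0$; your Nagumo tangent-cone criterion gives only forward invariance of the closed orthant, i.e.\ $s_j(t)\ge 0$. This is enough for the boundedness step, but is slightly weaker than the lemma as literally stated; the upgrade to strict positivity is routine but should be mentioned. For boundedness, the paper does not build a single weighted combination with all-positive coefficients. Instead it exhibits three \emph{exact} linear first integrals with non-negative (but some zero) coefficients, namely $x_0+s_0$, $x_1+\omega_0 s_0+s_1$, and $\omega_2 x_0+x_2+\omega_0\omega_1 s_0+\omega_1 s_1+s_2$, each satisfying $L'=-\alpha(L-\text{const})$, and reads off bounds component by component. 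Your single Lyapunov-type sum $M$ is tidier for the lemma in isolation, but the paper's three identities are not incidental: they are precisely the conservation principles used immediately afterward to collapse the six-dimensional system to the three-dimensional one on $\Omega$. So your route is more economical as a stand-alone proof, while the paper's route does double duty by simultaneously setting up the model reduction.
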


\begin{proof}
Consider any solution $\Vec{\varphi}\left(t\right)$ with positive initial conditions. By existence and uniqueness theory, there cannot be a time $\Bar{t}>0$, such that $x_i\left(\Bar{t}\right)=0$ for some $i\in\left\{1,2,3\right\}$, since then $x_i\left(t\right)\equiv 0$ for all $t\in\mathbb{R}$, contradicting $x_i\left(0\right)>0$. Hence $x_i\left(t\right)>0$ for all $t\geq 0$. Also, if $x_i\left(0\right)=0$ for some $i\in\left\{1,2,3\right\}$, then there is a solution of system \eqref{eq:system} with $x_i\left(t\right)\equiv 0$ for all $t\in\mathbb{R}$. By existence and uniqueness theory, this is the only solution.

Now, consider $\Vec{\varphi}\left(t\right)$, and suppose that there is some $\Bar{t}>0$, such that $s_0\left(t\right)>0$ for $t\in\left[0,\Bar{t}\right)$, $s_0\left(\Bar{t}\right)=0$, and $s_1\left(t\right),s_2\left(t\right)\geq 0$ for $t\in\left[0,\Bar{t}\right]$. Then $s_0'\left(\Bar{t}\right)\leq 0$. However, from system \eqref{eq:system}, $s_0'\left(\Bar{t}\right)= \alpha u_f$. If $u_f>0$, then $s_0'\left(\Bar{t}\right)>0$, a contradiction. If $u_f=0$, then there is a solution of system \eqref{eq:system} with $s_0\left(t\right)\equiv 0$, which contradicts uniqueness of solutions. It follows that $s_0\left(t\right)>0$ for all $t\geq 0$.

Next, consider $\Vec{\varphi}\left(t\right)$, and suppose that there is some $\Bar{t}>0$, such that $s_2\left(t\right)>0$ for $t\in\left[0,\Bar{t}\right)$, $s_2\left(\Bar{t}\right)=0$, and $s_1\left(t\right)\geq 0$ for $t\in\left[0,\Bar{t}\right]$. Then $s_2'\left(\Bar{t}\right)\leq 0$. However, from system \eqref{eq:system}, $s_2'\left(\Bar{t}\right)=\alpha u_h + \omega_1\mu_1\left(s_1\left(\Bar{t}\right),0\right)x_1$. If $u_h>0$, or $s_1\left(\Bar{t}\right)>0$, then $s_2'\left(\Bar{t}\right)>0$, a contradiction. If both $u_h=0$, and $s_1\left(\Bar{t}\right)=0$, then $s_2'\left(\Bar{t}\right)=0$, and there is another solution with $s_2\left(t\right)\equiv 0$, which contradicts  uniqueness of solutions to initial value problems. It follows that $s_2\left(t\right)>0$ for all $t\geq 0$.

Finally, consider $\Vec{\varphi}\left(t\right)$, and suppose that there is some $\Bar{t}>0$, such that $s_1\left(t\right)>0$ for $t\in\left[0,\Bar{t}\right)$, $s_1\left(\Bar{t}\right)=0$. Then $s_1'\left(\Bar{t}\right)\leq 0$. However, from system \eqref{eq:system}, $s_1'\left(\Bar{t}\right)=\alpha u_g + \omega_0\mu_0\left(s_0\left(\Bar{t}\right),s_2\left(\Bar{t}\right)\right)x_0$, so $s_1'\left(\Bar{t}\right)>0$, a contradiction. It follows that $s_1\left(t\right)>0$ for all $t\geq 0$.

We have thus proved the positivity of solutions and move on to showing the boundedness of solutions.

By adding the first and the fourth equations of (\ref{eq:system}), we obtain
\[
x_0'+s_0' = -\alpha\left(x_0+s_0-u_f\right),
\]
hence
\[
\left(x_0 + s_0 -u_f\right)' = -\alpha\left(x_0+s_0-u_f\right),
\]
which implies that
\begin{equation}\label{eq:cons_law_1}
    x_0(t)+s_0(t) = u_f + \left(x_0(0)+s_0(0)-u_f\right)e^{-\alpha t}.
\end{equation}
Similarly, we obtain
\begin{equation}\label{eq:cons_law_2}
    x_1(t)+\omega_0 s_0(t) + s_1(t) = \omega_0 u_f +u_g + \left(x_1(0)+\omega_0 s_0(0)+s_1(0)-\omega_0 u_f - u_g\right)e^{-\alpha t},
\end{equation}
and
\begin{multline}\label{eq:cons_law_3}
    \omega_2x_0(t) + x_2(t) +\omega_0\omega_1 s_0(t) + \omega_1 s_1(t) + s_2(t) = \omega_0\omega_1u_f + \omega_1u_g + u_h+ \\ \left(\omega_0x_0(0)+x_2(0)+\omega_0\omega_1s_0(0) + \omega_1 s_1(0) + s_2(0) -\omega_0\omega_1u_f-\omega_1u_g-u_h\right)e^{-\alpha t} .
\end{multline}
Since all terms of the sums in (\ref{eq:cons_law_1}), (\ref{eq:cons_law_2}) and (\ref{eq:cons_law_3}) are positive for all positive initial conditions, the solutions of (\ref{eq:system}) are bounded. Also, taking the limit as $t \to \infty$ in equations \eqref{eq:cons_law_1}, \eqref{eq:cons_law_2}, and \eqref{eq:cons_law_3} we obtain that 
\begin{align}
    \lim_{t\to\infty}\left(x_0\left(t\right)+s_0\left(t\right)\right) &= u_f,\\
    \lim_{t\to\infty}\left(x_1\left(t\right)+\omega_0 s_0\left(t\right) + s_1\left(t\right)\right) &= \omega_0 u_f +u_g,\\
    \lim_{t\to\infty}\left(\omega_2x_0\left(t\right) + x_2\left(t\right) +\omega_0\omega_1 s_0\left(t\right) + \omega_1 s_1\left(t\right) + s_2\left(t\right)\right) &= \omega_0\omega_1u_f + \omega_1u_g + u_h.
\end{align}
Starting with any positive initial conditions, the solutions of system \eqref{eq:system} eventually satisfy
\begin{align}
    x_0 + s_0 &= u_f,\label{eq:cons_princ_1} \\
    x_1 + \omega_0s_0+s_1 &= \omega_0 u_f+u_g,\label{eq:cons_princ_2} \\
    \omega_2x_0 +x_2 + \omega_0\omega_1s_0 + \omega_1s_1 + s_2 &= \omega_0\omega_1u_f +\omega_1u_g + u_h.\label{eq:cons_princ_3}
\end{align}
\end{proof}
We call relations \eqref{eq:cons_princ_1}-\eqref{eq:cons_princ_3} "conservation principles". In other words, system (\ref{eq:system}) admits a positively invariant attracting set $\Omega \subset \mathbb{R}^6$, such that
\begin{align}\label{eq:inv_set}
\begin{split}
    \Omega = \{\left(x_0,x_1,x_2,s_0,s_1,s_2\right)\in \mathbb{R}^6: x_i,s_i &\geq 0, i=0,1,2; \\
    x_0 + s_0 &= u_f, \\
    x_1 + \omega_0s_0+s_1 &= \omega_0 u_f+u_g, \\
    \omega_2x_0 +x_2 + \omega_0\omega_1s_0 + \omega_1s_1 + s_2 &= \omega_0\omega_1u_f +\omega_1u_g + u_h\}.
\end{split}
\end{align}
Using the conservation principles we can compute $s_0$, $s_1$, and $s_2$ as functions of $x_0$, $x_1$, $x_2$
\begin{align}\label{eq:cons_laws}
\begin{split}
    s_0 &= -x_0 + u_f,\\
    s_1 &= \omega_0 x_0 -x_1 + u_g,\\
    s_2 &= -\omega_2 x_0 + \omega_1 x_1 - x_2 + u_h.
\end{split}    
\end{align}
Now, we can reduce the analysis of the original system (\ref{eq:system}) to the analysis of the following equivalent three-dimensional system on the invariant set $\Omega$
\begin{align}\label{eq:system_reduced}
    \begin{cases}
        \begin{split}
            x_0' &= -\alpha x_0 +\mu_0\left(-x_0 + u_f,-\omega_2 x_0 +\omega_1x_1 -x_2 +u_h\right)x_0,\\
            x_1' &= -\alpha x_1 +\mu_1\left(\omega_0x_0-x_1 +u_g,-\omega_2x_0+\omega_1x_1-x_2 +u_h\right)x_1,\\
            x_2' &= -\alpha x_2 + \mu_2\left(-\omega_2x_0 +\omega_1x_1 -x_2 +u_h\right)x_2.
        \end{split}
    \end{cases}
\end{align}

From now on, we will study the reduced system (\ref{eq:system_reduced}). We begin by analyzing all the possible equilibria.

\subsection{Equilibria of the reduced system and their local stability}
The equilibria are found by setting the right hand sides of  equations in (\ref{eq:system_reduced}) equal to zero. Below, we list all the possibilities obtained this way. Since equations \eqref{eq:cons_laws} give a one-to-one correspondence of the equilibria of system \eqref{eq:system_reduced} with the equilibria of system \eqref{eq:system}, we also list the corresponding steady states $\left(x_0,x_1,x_2,s_0,s_1,s_2\right)$ of the six-dimensional system in each case.

Types of equilibria of system (\ref{eq:system_reduced}):
\begin{itemize}
    \item Zero equilibrium $_{\left(000\right)}\mathcal{E}=\left(0,0,0\right)$. The corresponding equilibrium $\prescript{}{\left(000\right)}{\mathbb{E}}$ in the six-dimensional system:
    \begin{equation}
        \prescript{}{\left(000\right)}{\mathbb{E}} = \left(0,0,0,u_f,u_g,u_h\right).
    \end{equation}
    In this case, all the populations die, hence the only source for the substrates comes from the inflow rates $u_f$, $u_g$, and $u_h$.
    \item Boundary equilibria:
    \begin{itemize}
        \item $_{\left(100\right)}\mathcal{E}=\left(\prescript{}{\left(100\right)}{x_0},0,0\right)$, where $x_0=\prescript{}{\left(100\right)}{x_0}>0$ is a solution (if it exists) of
        \begin{equation}
            \mu_0(-x_0+u_f,-\omega_2 x_0+u_h) = \alpha.
        \end{equation}
        The corresponding equilibrium $\prescript{}{\left(100\right)}{\mathbb{E}}$ in the six-dimensional system:
        \begin{equation}
            \prescript{}{\left(100\right)}{\mathbb{E}} = \left(\prescript{}{\left(100\right)}{x_0},0,0,-\prescript{}{\left(100\right)}{x_0}+u_f,\omega_0 \prescript{}{\left(100\right)}{x_0}+u_g,-\omega_2 \prescript{}{\left(100\right)}{x_0}+u_h\right).
        \end{equation}
        In this case, the only microorganism surviving is the chlorophenol degrader. It consumes the chlorophenol, hence the value of $s_0$ is given as the balance between this consumption, and the supply inflow $u_f$. Since $x_0$ produces phenol this value is added to $u_g$ in the total phenol amount $s_1$. Since $x_0$ consumes hydrogen as well, the value $\omega_2 \prescript{}{\left(100\right)}{x_0}$ is subtracted from $s_2$ as well. This steady state is not desirable because of the phenol build-up in the system.
        \item $_{\left(010\right)}\mathcal{E}=\left(0,\prescript{}{\left(010\right)}{x_1},0\right)$, where $x_1=\prescript{}{\left(010\right)}{x_1}>0$ is a solution (if it exists) of
        \begin{equation}
            \mu_1(-x_1+u_g,\omega_1x_1+u_h) = \alpha.
        \end{equation}
        The corresponding equilibrium $\prescript{}{\left(010\right)}{\mathbb{E}}$ in the six-dimensional system:
        \begin{equation}
            \prescript{}{\left(010\right)}{\mathbb{E}} = \left(0,\prescript{}{\left(010\right)}{x_1},0,u_f,-\prescript{}{\left(010\right)}{x_1}+u_g,\omega_1 \prescript{}{\left(010\right)}{x_1}+u_h\right).
        \end{equation}
        In this case, only the phenol degrader survives, and hence the value of $s_1$ at the equilibrium is equal to the balance between its consumption and inflow $u_g$. Chlorophenol is not being consumed, hence its total amount equals the inflow concentration $u_f$. Hydrogen is being produced by the phenol degrader, and also its value is increased by the inflow $u_h$.
        
        \item $_{\left(001\right)}\mathcal{E}=\left(0,0,\prescript{}{\left(001\right)}{x_2}\right)$, where $x_2=\prescript{}{\left(001\right)}{x_2}>0$ is a solution (if it exists) of
        \begin{equation}
            \mu_2(-x_2+u_h) = \alpha.
        \end{equation}
        The corresponding equilibrium $\prescript{}{\left(001\right)}{\mathbb{E}}$ in the six-dimensional system:
        \begin{equation}
            \prescript{}{\left(001\right)}{\mathbb{E}} = \left(0,0,\prescript{}{\left(001\right)}{x_2},u_f,u_g,-\prescript{}{\left(001\right)}{x_2}+u_h\right).
        \end{equation}
        Here, only the methanogen is present, hence the values of chlotophenol and phenol are equal the inflow concentrations $u_f$ and $u_g$, respectively. 
        \item $_{\left(101\right)}\mathcal{E}=\left(\prescript{}{\left(101\right)}{x_0},0,-\omega_2\prescript{}{\left(101\right)}{x_0}+u_h-\mu_2^{-1}\left(\alpha\right)\right)$, 
        where $x_0=\prescript{}{\left(101\right)}{x_0}>0$ is a solution (if it exists) of
        \begin{equation}
            \mu_0(-x_0+u_f,\mu_2^{-1}(\alpha)) = \alpha.
        \end{equation}
        The corresponding equilibrium $\prescript{}{\left(101\right)}{\mathbb{E}}$ in the six-dimensional system:
        \begin{multline}
            \prescript{}{\left(101\right)}{\mathbb{E}} =\Big( \prescript{}{\left(101\right)}{x_0},0,-\omega_2\prescript{}{\left(101\right)}{x_0}+u_h-\mu_2^{-1}\left(\alpha\right),
            -\prescript{}{\left(101\right)}{x_0}+u_f,\\
            \omega_0 \prescript{}{\left(101\right)}{x_0}+u_g,\mu_2^{\left(-1\right)}\left(\alpha\right)\Big).
        \end{multline}
        In this case, both chlorophenol degrader and methanogen are present. The lack of phenol degrader results in phenol build-up. We can also observe competition for hydrogen between the phenol degrader and methanogen.
        \item $_{\left(011\right)}\mathcal{E}=\left(0,\prescript{}{\left(011\right)}{x_1},\omega_1 \prescript{}{\left(011\right)}{x_1} + u_h -\mu_2^{-1}\left(\alpha\right)\right)$, 
        where $x_1=\prescript{}{\left(011\right)}{x_1}>0$ is a solution (if it exists) of
        \begin{equation}
            \mu_1\left(-x_1+u_g,\mu_2^{-1}\left(\alpha\right)\right) = \alpha.
        \end{equation}
        The corresponding equilibrium $\prescript{}{\left(011\right)}{\mathbb{E}}$ in the six-dimensional system:
        \begin{equation}
            \prescript{}{\left(011\right)}{\mathbb{E}} = \left(0,\prescript{}{\left(011\right)}{x_1},\omega_1 \prescript{}{\left(011\right)}{x_1} + u_h -\mu_2^{-1}\left(\alpha\right),u_f,-\prescript{}{\left(011\right)}{x_1}+u_g,\mu_2^{-1}\left(\alpha\right)\right).
        \end{equation}
        This steady state represents a two-tiered food chain, with the phenol degrader and methanogen present. Hydrogen has an inhibiting effect on the phenol degrader.
        \item $_{\left(110\right)}\mathcal{E}=\left(\prescript{}{\left(110\right)}{x_0},\prescript{}{\left(110\right)}{x_1},0\right)$, where $x_0=\prescript{}{\left(110\right)}{x_0}>0$ and $x_1=\prescript{}{\left(110\right)}{x_1}>0$ are solutions of
        \begin{align}
            \begin{split}
                \mu_0\left(-x_0 + u_f,-\omega_2 x_0 +\omega_1x_1 +u_h\right) &= \alpha,\\
                \mu_1\left(\omega_0x_0-x_1 +u_g,-\omega_2x_0+\omega_1x_1 +u_h\right) &= \alpha.
            \end{split}
        \end{align}
        The corresponding equilibrium $\prescript{}{\left(110\right)}{\mathbb{E}}$ in the six-dimensional system:
        \begin{multline}
            \prescript{}{\left(110\right)}{\mathbb{E}} = \Big(\prescript{}{\left(110\right)}{x_0},\prescript{}{\left(110\right)}{x_1},0,-\prescript{}{\left(110\right)}{x_0}+u_f,
            \omega_0 \prescript{}{\left(110\right)}{x_0} -\prescript{}{\left(110\right)}{x_1} + u_g,\\ -\omega_2 \prescript{}{\left(110\right)}{x_0} + \omega_1 \prescript{}{\left(110\right)}{x_1} + u_h \Big).
        \end{multline}
        In this case, both the chlorophenol and phenol degraders are present, however the methanogen is washed out. Thus, full mineralisation to methane is not possible and, hence, the hydrogen accumulates to some theoretical maximum, balanced such that the the inhibitory effect on the phenol degrader  does not induce washout, whilst providing enough hydrogen for chlorophenol degrader activity. 
        \end{itemize}
    \item Positive (interior) equilibrium $_{\left(111\right)}\mathcal{E}=\left(\overset{*}{x}_0,\overset{*}{x}_1,\overset{*}{x}_2\right)$, where $x_0=\overset{*}{x}_0>0$, $x_1=\overset{*}{x}_1>0$, and $x_2=\overset{*}{x}_2>0$ are solutions of
    \begin{align}\label{eq:pos_eq_implicit}
        \begin{split}
            \mu_0\left(-x_0 + u_f,-\omega_2 x_0 +\omega_1x_1 -x_2 +u_h\right)&=\alpha,\\
            \mu_1\left(\omega_0x_0-x_1 +u_g,-\omega_2x_0+\omega_1x_1-x_2 +u_h\right)&=\alpha,\\
            \mu_2\left(-\omega_2x_0 +\omega_1x_1 -x_2 +u_h\right)&=\alpha.
        \end{split}
    \end{align}
    The corresponding equilibrium $\prescript{}{\left(111\right)}{\mathbb{E}}$ in the six-dimensional system:
    \begin{equation}
        \prescript{}{\left(111\right)}{\mathbb{E}} = \left(\overset{*}{x}_0,\overset{*}{x}_1,\overset{*}{x}_2,-\overset{*}{x}_0+u_f,\omega_0 \overset{*}{x}_0 -\overset{*}{x}_1 +u_g, \mu_2^{\left(-1\right)}\left(\alpha\right) \right).
    \end{equation}
    Here, all species are present, and thus we observe full chlorophenol mineralisation. For this reason, asymptotic stability of this equilibrium is the desired operational state.
\end{itemize}

As it is not clear whether the listed equations have solutions, and if the solutions are unique, we now derive conditions on the parameters that address these questions.

\subsubsection{Existence and uniqueness}\label{subsec:exist_uniq}
Since there are many parameters in system \eqref{eq:system_reduced}, it was not possible to obtain explicit expressions for some of the equilibria. We did however simplify the computation by only looking for equilibria in the invariant set $\Omega$. This assumption is reasonable, since the dynamics of the original system reduces to the dynamics on the set $\Omega$.
\begin{itemize}
    \item $_{\left(000\right)}\mathcal{E}=\left(0,0,0\right)$ equilibrium always exists.
    \item $_{\left(100\right)}\mathcal{E}=\left(\prescript{}{\left(100\right)}{x_0},0,0\right)$. As mentioned at the beginning of subsection \ref{subsec:exist_uniq}, we are looking for the equilibria in the feasible set $\Omega$, i.e., where all of the components in corresponding six-dimensional equilibria are nonnegative. Thus, we want $x_0=\prescript{}{\left(100\right)}{x_0}$ to satisfy $\prescript{}{\left(100\right)}{x_0}\in\left(0,u_f\right]$, and $\prescript{}{\left(100\right)}{x_0}\leq \frac{u_h}{\omega_2}$; hence we consider only $x_0\in\left(0,\min{\left(u_f,\frac{u_h}{\omega_2}\right)}\right]$. For such $x_0$ the differentiable mapping $x_0 \mapsto \mu_0\left(u_f-x_0,-\omega_2x_0+u_h\right)$ is decreasing, and thus $_{\left(100\right)}\mathcal{E}$ exists in $\Omega$ if and only if $\alpha\in\left[0,\mu_0(u_f,u_h)\right)$, and when it exists, it is unique.
    \item $_{\left(010\right)}\mathcal{E}=\left(0,\prescript{}{\left(010\right)}{x_1},0\right)$. By a similar argument, we consider $x_1\in \left(0,u_g\right]$. The differentiable mapping $x_1\mapsto \mu_1(u_g-x_1,\omega_1x_1+u_h)$ is decreasing, so $_{\left(010\right)}\mathcal{E}$ exists in $\Omega$ if and only if $\alpha\in\left[0,\mu_1(u_g,u_h)\right)$, and when it exists, it is unique.
    \item $_{\left(001\right)}\mathcal{E}=\left(0,0,\prescript{}{\left(001\right)}{x_2}\right)$. Once again, we consider only $x_2\in(0,u_h]$, for which the differentiable mapping $x_2\mapsto \mu_2(-x_2+u_h)$ is decreasing, so $_{\left(001\right)}\mathcal{E}$ exists in $\Omega$ if and only if $\alpha\in[0,\mu_2(u_h))$, and when it exists, it is unique (notice that if $\mu_2(s_2)=\frac{\phi_2s_2}{1+s_2}$, we have $\prescript{}{\left(001\right)}{x_2}=u_h-\frac{\alpha}{\phi_2-\alpha}$).
    \item $_{\left(101\right)}\mathcal{E}=\left(\prescript{}{\left(101\right)}{x_0},0,-\omega_2\prescript{}{\left(101\right)}{x_0}+u_h-\mu_2^{-1}\left(\alpha\right)\right)$. For $\prescript{}{\left(101\right)}{x_2}=-\omega_2\prescript{}{\left(101\right)}{x_0}+u_h-\mu_2^{-1}\left(\alpha\right)$, the restriction $\prescript{}{\left(101\right)}{x_2}>0$ gives us the condition $\prescript{}{\left(101\right)}{x_0}< \frac{u_h-\mu_2^{-1}(\alpha)}{\omega_2}$ (notice that this already implies that $\prescript{}{\left(101\right)}{x_0}\leq \frac{u_h}{\omega_2}$ is satisfied), and thus the requirement $\prescript{}{\left(101\right)}{x_0}>0$ results in the first condition on $\alpha$, i.e., $\alpha<\mu_2\left(u_h\right)$. We also require that $\prescript{}{\left(101\right)}{x_0}\leq u_f$. We therefore only consider the mapping $x_0\mapsto \mu_0(-x_0+u_f,\mu_2^{-1}(\alpha))$ for $x_0\in\left(0, \min{\left(u_f,\frac{u_h-\mu_2^{-1}\left(\alpha\right)}{\omega_2}\right)} \right)$. This differentiable mapping is decreasing, so $_{\left(101\right)}\mathcal{E}$ exists in $\Omega$ if and only if 
    \begin{equation}
    \alpha\in\left(\mu_0\left(u_f -\min{\left(u_f,\frac{u_h-\mu_2^{-1}\left(\alpha\right)}{\omega_2}\right)},\mu_2^{-1}\left(\alpha\right) \right),\mu_0\left(u_f,\mu_2^{-1}\left(\alpha\right)\right)\right),
    \end{equation}
    and $\alpha<\mu_2\left(u_h\right)$, and when it exists, it is unique.

By solving the equation
\begin{equation}
\mu_0(-x_0+u_f,\mu_2^{-1}(\alpha)) = \alpha,
\end{equation}
we obtain the following explicit formulas for $\prescript{}{\left(101\right)}{x_0}$ and $\prescript{}{\left(101\right)}{x_2}$ with our test prototypes $\mu_0$, $\mu_1$, and $\mu_2$:
\begin{align}
\prescript{}{\left(101\right)}{x_0}&=\frac{\alpha\left(1+u_f\right)\left(K_P+\mu_2^{-1}\left(\alpha\right)\right)-u_f\mu_2^{-1}\left(\alpha\right)}{\alpha\left(K_P+\mu_2^{-1}\left(\alpha\right)\right)-\mu_2^{-1}\left(\alpha\right)},\\
\prescript{}{\left(101\right)}{x_2}&=\omega_2\frac{u_f\mu_2^{-1}\left(\alpha\right)-\alpha\left(1+u_f\right)\left(K_P+\mu_2^{-1}\left(\alpha\right)\right)}{\alpha\left(K_P+\mu_2^{-1}\left(\alpha\right)\right)-\mu_2^{-1}\left(\alpha\right)}+u_h-\mu_2^{-1}\left(\alpha\right).
\end{align}
	\item $_{\left(011\right)}\mathcal{E}=\left(0,\prescript{}{\left(011\right)}{x_1},\omega_1 \prescript{}{\left(011\right)}{x_1} + u_h -\mu_2^{-1}\left(\alpha\right)\right)$. For $x_2=\omega_1\prescript{}{\left(011\right)}{x_1} + u_h -\mu_2^{-1}(\alpha)$ the restriction $\prescript{}{\left(011\right)}{x_2}> 0$ gives the condition $x_1> \frac{\mu_2^{-1}\left(\alpha\right)-u_h}{\omega_1}$, so the requirement $\prescript{}{\left(011\right)}{x_1}\leq u_g$ results in the first condition on $\alpha$, i.e., $\alpha<\mu_2\left(\omega_1u_g+u_h\right)$ (notice that this already implies that $\omega_1\prescript{}{\left(011\right)}{x_1}+u_h\geq 0 $ is satisfied). We therefore only consider the mapping $x_1\mapsto \mu_1\left(-x_1+u_g,\mu_2^{-1}(\alpha)\right)$ for $\left(\max\left(0,\frac{\mu_2^{-1}\left(\alpha\right)-u_h}{\omega_1}\right),u_g\right]$. For such $x_1$, this differentiable mapping is decreasing, so $_{\left(011\right)}\mathcal{E}$ exists in $\Omega$ if and only if $\alpha\in \left[0,\mu_1\left(u_g-\max{\left(0,\frac{\mu_2^{-1}\left(\alpha\right)-u_h}{\omega_1}\right)},\mu_2^{-1}\left(\alpha\right)\right)\right)$ and $\alpha<\mu_2\left(\omega_1u_g+u_h\right)$, and when it exists, it is unique.
By solving the equation
\begin{equation}
\mu_1\left(-x_1+u_g,\mu_2^{-1}\left(\alpha\right)\right) = \alpha,
\end{equation}
for the prototypes given by \eqref{eq:growth_fcns}, we obtain the following explicit formulas for $\prescript{}{\left(011\right)}{x_1}$ and $\prescript{}{\left(011\right)}{x_2}$ 
\begin{align}
 \prescript{}{\left(011\right)}{x_1}&=\frac{\alpha \left(1+u_g\right) \left(1+K_I\mu_2^{-1}\left(\alpha\right)\right)-u_g\phi_1}{\alpha\left(1+K_I\mu_2^{-1}\left(\alpha\right)\right)-\phi_1},\\
 \prescript{}{\left(011\right)}{x_2}&=\omega_1\frac{\alpha \left(1+u_g\right) \left(1+K_I\mu_2^{-1}\left(\alpha\right)\right)-u_g\phi_1}{\alpha\left(1+K_I\mu_2^{-1}\left(\alpha\right)\right)-\phi_1}+u_h-\mu_2^{-1}\left(\alpha\right).
\end{align}

\item $_{\left(110\right)}\mathcal{E}=\left(\prescript{}{\left(110\right)}{x_0},\prescript{}{\left(110\right)}{x_1},0\right)$. This case is much more complicated since we cannot explicitly compute $x_0$ as a function of $x_1$, or $x_1$ as a function of $x_0$ in the same way as in the previous cases. In this case more than one equilibrium of the form $_{\left(110\right)}\mathcal{E}$ can exist. In the case of the  growth functions defined in (\ref{eq:growth_fcns}), it was proved in \cite{sari2017} that if $u_g=u_h=0$, then there exist at most two equilibria of this form. By using the specific growth functions (\ref{eq:growth_fcns}), the equilibria, given as positive solutions of the following system of equations

\begin{equation}\label{eq:x0_x1_eq_sys}
    \begin{split}
        \frac{-x_0 + u_f}{1-x_0+u_f}\frac{-\omega_2x_0+\omega_1x_1+u_h}{K_P -\omega_2x_0+\omega_1x_1+u_h} &= \alpha,\\
        \phi_1\frac{\omega_0x_0-x_1+u_g}{1+\omega_0x_0-x_1+u_g}\frac{1}{1+K_I\omega_0x_0-x_1+u_g} &= \alpha,
    \end{split}
\end{equation}

must also satisfy $x_0<u_f$ and $\max\left(0,\frac{\omega_2x_0-u_h}{\omega_1}\right)<x_1<u_g+\omega_0x_0$. Notice that the first equation in (\ref{eq:x0_x1_eq_sys}) is linear in $x_1$, hence we can compute it as a function of $x_0$, and substitute this expression into the second equation of (\ref{eq:x0_x1_eq_sys}), obtaining a fourth order polynomial in $x_0$. Each zero of this polynomial, together with the corresponding value of $x_1$, which satisfies the aforementioned conditions, will constitute an equilibrium $\prescript{}{\left(110\right)}{\mathcal{E}}$ of system \eqref{eq:system_reduced}. Since the polynomial in $x_0$ is of order four, and $x_1$ is given as a function of $x_0$, we can have at most four equilibria of the form $\prescript{}{\left(110\right)}{\mathcal{E}}$.

\item $\prescript{}{\left(111\right)}{\mathcal{E}}=\left(\overset{*}{x}_0,\overset{*}{x}_1,\overset{*}{x}_2\right)$. For the interior equilibrium, we have to consider two cases, depending on the sign of $\omega_2u_f-u_h$, since the bounds on the values of $x_1$ are different in each case. We are looking for solutions of system (\ref{eq:pos_eq_implicit}), for which $x_0$, $x_1$, and $x_2$ satisfy

\begin{align}
    \begin{split}
        x_0 &\in\left(0,u_f\right],\\
        x_1 &\in\left(\max\left(0,\frac{\omega_2x_0-u_h}{\omega_1}\right),\omega_0x_0+u_g\right], \\
        x_2 &\in\left(0,-\omega_2x_0+\omega_1x_1+u_h\right],
    \end{split}
\end{align}
if $\omega_2u_f-u_h>0$, and
\begin{align}
    \begin{split}
        x_0 &\in\left(0,u_f\right],\\
        x_1 &\in\left(0,\omega_0x_0+u_g\right], \\
        x_2 &\in\left(0,-\omega_2x_0+\omega_1x_1+u_h\right],
    \end{split}
\end{align}
if $\omega_2u_f-u_h<0$. In both cases, if we let $x_2=-\omega_2x_0+\omega_1x_1+u_h-\mu_2^{-1}\left(\alpha\right)$ (which immediately gives us a necessary condition $\alpha<\sup\limits_{s_2\geq 0}\mu_2\left(\alpha\right)$), we obtain the following system for $x_0$ and $x_1$
\begin{align}
    \begin{split}
        \mu_0\left(-x_0+u_f,\mu_2^{-1}\left(\alpha\right)\right)&=\alpha,\\
        \mu_1\left(\omega_0x_0-x_1+u_g,\mu_2^{-1}\left(\alpha\right)\right)&=\alpha.
    \end{split}
\end{align}
For $x_0\in\left(0,u_f\right]$ the differentiable mapping $x_0\mapsto\mu_0\left(-x_0+u_f,\mu_2^{-1}\left(\alpha\right)\right)$ is decreasing, so $x_0=\overset{*}{x}_0$ exists if and only if $\alpha\in\left[0,\mu_0\left(u_f,\mu_2^{-1}\left(\alpha\right)\right)\right)$ and when this value exists, it is unique. Now consider
\begin{equation}
    \mu_1\left(\omega_0\overset{*}{x}_0-x_1+u_g,\mu_2^{-1}\left(\alpha\right)\right)=\alpha.
\end{equation}
We have two cases
\begin{itemize}
    \item $\omega_2u_f-u_h>0$.
    For $x_1\in\left(\max\left(0,\frac{\omega_2\overset{*}{x}_0-u_h}{\omega_1}\right),\omega_0\overset{*}{x}_0+u_g\right]$ the differentiable mapping $x_1\mapsto\mu_1\left(\omega_0\overset{*}{x}_0-x_1+u_g,\mu_2^{-1}\left(\alpha\right)\right)$ is decreasing, so $x_1=\overset{*}{x}_1$ exists if and only if $\alpha\in\left[0,\mu_1\left(\omega_0\overset{*}{x}_0-\max\left(0,\frac{\omega_2\overset{*}{x}_0-u_h}{\omega_1}\right) + u_g, \mu_2^{-1}\left(\alpha\right)\right)\right)$ and when this value exists, it is unique.
    
    \item $\omega_2u_f-u_h<0$.
    Similarly, by considering $x_1\in\left(0,\omega_0\overset{*}{x}_0+u_g\right]$ it follows that $x_1=\overset{*}{x}_1$ exists if and only if $\alpha\in\left[0,\mu_1\left(\omega_0\overset{*}{x}_0+u_g,\mu_2^{-1}\left(\alpha\right)\right)\right)$.
\end{itemize}

Having $\overset{*}{x}_0$ and $\overset{*}{x}_1$ defined, we let $\overset{*}{x}_2=-\omega_2\overset{*}{x}_0+\omega_1\overset{*}{x}_1+u_h-\mu_2^{-1}\left(\alpha\right)$. In order to have $\overset{*}{x}_2>0$ we need $\alpha<\mu_2\left(-\omega_2\overset{*}{x}_0+\omega_1\overset{*}{x}_1+u_h\right)$; hence $\prescript{}{\left(111\right)}{\mathcal{E}}$ exists in $\Omega$ if and only if

\begin{multline}
    \alpha\in \Bigg[ 0, \min\Bigg( \mu_0\left(u_f,\mu_2^{-1}\left(\alpha\right)\right),\\ \mu_1\left(\omega_0\overset{*}{x}_0-\max\left(0,\frac{\omega_2\overset{*}{x}_0-u_h}{\omega_1}\right) + u_g, \mu_2^{-1}\left(\alpha\right)\right),\\ \mu_2\left(-\omega_2\overset{*}{x}_0+\omega_1\overset{*}{x}_1+u_h\right)\Bigg) \Bigg),
\end{multline}
in the $\omega_2u_f-u_h>0$ case, and
\begin{equation}
    \alpha\in\Big[0, \min\Big( \mu_0\left(u_f,\mu_2^{-1}\left(\alpha\right)\right),\mu_1\left(\omega_0\overset{*}{x}_0+u_g,\mu_2^{-1}\left(\alpha\right)\right), \mu_2\left(-\omega_2\overset{*}{x}_0+\omega_1\overset{*}{x}_1+u_h\right)\Big)\Big),
\end{equation}
in the $\omega_2u_f-u_h<0$ case. Although the conditions on $\alpha$ are implicit and very complicated, we now know that if $\prescript{}{\left(111\right)}{\mathcal{E}}$ exists, it is unique. With the growth functions defined in (\ref{eq:growth_fcns}), we can solve the equations (\ref{eq:pos_eq_implicit}) explicitly and obtain the following formulas for the interior equilibrium
    \begin{align}
        \overset{*}{x}_0 &= 1 + u_f + \frac{1}{K_P \left(\phi_2-\alpha\right)+\alpha -1},\\
        \overset{*}{x}_1 &= \omega_0 \overset{*}{x}_0 + u_g + 1 + \frac{\phi_1}{\alpha\left(1+K_I\frac{\alpha}{\phi_2-\alpha}\right)-\phi_1},\\
        \overset{*}{x}_2 &= -\omega_2\overset{*}{x}_0+\omega_1\overset{*}{x}_1+u_h-\frac{\alpha}{\phi_2-\alpha}.
    \end{align}
\end{itemize} 

\subsubsection{Local stability results}
We now study the local stability of the equilibria by considering the eigenvalues of the Jacobian evaluated at each equilibrium. 

The Jacobian $J$ for system (\ref{eq:system_reduced}) evaluated at $\left(x_0,x_1,x_2\right)$ has the following form
\begin{equation}
J = 
\begin{bmatrix}
\mu_0-\alpha + x_0 \left( -\frac{\partial \mu_0}{\partial s_0} -\omega_2 \frac{\partial \mu_0}{\partial s_2}\right) & \omega_1 x_0 \frac{\partial \mu_0}{\partial s_2} & -x_0 \frac{\partial \mu_0}{\partial s_2} \\
x_1 \left( \omega_0 \frac{\partial \mu_1}{\partial s_1} -\omega_2 \frac{\partial \mu_1}{\partial s_2} \right) & \mu_1 - \alpha + x_1\left(-\frac{\partial \mu_1}{\partial s_1} + \omega_1 \frac{\partial\mu_1}{\partial s_2}\right) & -x_1\frac{\partial \mu_1}{\partial s_2} \\ -\omega_2 x_2 \mu_2' & \omega_1 x_2 \mu_2' & \mu_2 -\alpha -x_2 \mu_2' 
\end{bmatrix}.
\end{equation}
\begin{itemize}
\item For the zero equilibrium $\prescript{}{\left(000\right)}{\mathcal{E}}$, the corresponding Jacobian $\prescript{}{\left(000\right)}{J}$ has the following form
\begin{equation}
\prescript{}{\left(000\right)}{J} =
\begin{bmatrix}
\mu_0 - \alpha & 0 & 0\\
0 & \mu_1-\alpha & 0\\
0 & 0 & \mu_2-\alpha
\end{bmatrix},
\end{equation}
and its eigenvalues are $\lambda_1=\mu_0\left(u_f,u_h\right) - \alpha$, $\lambda_2=\mu_1\left(u_g,u_h\right)-\alpha$, $\lambda_3=\mu_2\left(u_h\right)-\alpha$. This implies that if
\begin{itemize}
\item $\alpha>\max\left(\mu_0\left(u_f,u_h\right),\mu_1\left(u_g,u_h\right),\mu_2\left(u_h\right)\right)$, then $\prescript{}{\left(000\right)}{\mathcal{E}}$ is a stable node,
\item $\min\left(\mu_0\left(u_f,u_h\right),\mu_1\left(u_g,u_h\right),\mu_2\left(u_h\right)\right)<\alpha<\max\left(\mu_0\left(u_f,u_h\right),\mu_1\left(u_g,u_h\right),\mu_2\left(u_h\right)\right)$, then $\prescript{}{\left(000\right)}{\mathcal{E}}$ is a saddle point,
\item $\alpha<\min\left(\mu_0\left(u_f,u_h\right),\mu_1\left(u_g,u_h\right),\mu_2\left(u_h\right)\right)$, then $\prescript{}{\left(000\right)}{\mathcal{E}}$ is an unstable node.
\end{itemize}
\item For the boundary equilibrium $\prescript{}{\left(100\right)}{\mathcal{E}}$, the corresponding Jacobian $\prescript{}{\left(100\right)}{J}$ has the following form
\begin{equation}
\prescript{}{\left(100\right)}{J} = 
\begin{bmatrix}
\prescript{}{\left(100\right)}{x_0} \left( -\frac{\partial \mu_0}{\partial s_0} -\omega_2 \frac{\partial \mu_0}{\partial s_2}\right) & \omega_1 \prescript{}{\left(100\right)}{x_0} \frac{\partial \mu_0}{\partial s_2} & -\prescript{}{\left(100\right)}{x_0} \frac{\partial \mu_0}{\partial s_2} \\
0 & \mu_1 - \alpha & 0 \\
0 & 0 & \mu_2-\alpha
\end{bmatrix},
\end{equation}
and its eigenvalues are $\lambda_1=\prescript{}{\left(100\right)}{x_0} \left( -\frac{\partial \mu_0}{\partial s_0} -\omega_2 \frac{\partial \mu_0}{\partial s_2}\right)<0$,\\ $ \lambda_2=\mu_1\left(\omega_0\prescript{}{\left(100\right)}{x_0}+u_g,-\omega_2\prescript{}{\left(100\right)}{x_0}+u_h\right)-\alpha$, and $\lambda_3=\mu_2\left(-\omega_2\prescript{}{\left(100\right)}{x_0}+u_h\right)-\alpha$. Hence if
\begin{itemize}
\item $\alpha>\max\left(\mu_1\left(\omega_0\prescript{}{\left(100\right)}{x_0}+u_g,-\omega_2\prescript{}{\left(100\right)}{x_0}+u_h\right),\mu_2\left(-\omega_2\prescript{}{\left(100\right)}{x_0}+u_h\right)\right)$, then $\prescript{}{\left(100\right)}{\mathcal{E}}$ is a stable node,
\item $\alpha<\max\left(\mu_1\left(\omega_0\prescript{}{\left(100\right)}{x_0}+u_g,-\omega_2\prescript{}{\left(100\right)}{x_0}+u_h\right),\mu_2\left(-\omega_2\prescript{}{\left(100\right)}{x_0}+u_h\right)\right)$, then $\prescript{}{\left(100\right)}{\mathcal{E}}$ is a saddle point.
\end{itemize}
\item For the boundary equilibrium $\prescript{}{\left(010\right)}{\mathcal{E}}$, the corresponding Jacobian $\prescript{}{\left(010\right)}{J}$ has the following form
\begin{equation}
\prescript{}{\left(010\right)}{J} =
\begin{bmatrix}
\mu_0-\alpha & 0 & 0\\
\prescript{}{\left(010\right)}{x_1}\left( \omega_0 \frac{\partial \mu_1}{\partial s_1} -\omega_2 \frac{\partial \mu_1}{\partial s_2} \right) & \prescript{}{\left(010\right)}{x_1}\left(-\frac{\partial \mu_1}{\partial s_1} + \omega_1 \frac{\partial\mu_1}{\partial s_2}\right) & -\prescript{}{\left(010\right)}{x_1}\frac{\partial \mu_1}{\partial s_2} \\
0 & 0 & \mu_2-\alpha
\end{bmatrix},
\end{equation}
and its eigenvalues are $\lambda_1=\mu_0\left(u_f,\omega_1\prescript{}{\left(010\right)}{x_1}+u_h\right)-\alpha$, $\lambda_2=\prescript{}{\left(010\right)}{x_1}\left(-\frac{\partial \mu_1}{\partial s_1} + \omega_1 \frac{\partial\mu_1}{\partial s_2}\right)<0$, and $\lambda_3=\mu_2\left(\omega_1\prescript{}{\left(010\right)}{x_1}+u_h\right)-\alpha$. Hence if
\begin{itemize}
\item $\alpha>\max\left(\mu_0\left(u_f,\omega_1\prescript{}{\left(010\right)}{x_1}+u_h\right),\mu_2\left(\omega_1\prescript{}{\left(010\right)}{x_1}+u_h\right)\right)$, then $\prescript{}{\left(010\right)}{\mathcal{E}}$ is a stable node,
\item $\alpha<\max\left(\mu_0\left(u_f,\omega_1\prescript{}{\left(010\right)}{x_1}+u_h\right),\mu_2\left(\omega_1\prescript{}{\left(010\right)}{x_1}+u_h\right)\right)$, then $\prescript{}{\left(010\right)}{\mathcal{E}}$ is a saddle point.
\end{itemize}
\item For the boundary equilibrium $\prescript{}{\left(001\right)}{\mathcal{E}}$, the corresponding Jacobian $\prescript{}{\left(001\right)}{J}$ has the following form
\begin{equation}
\prescript{}{\left(001\right)}{J} =
\begin{bmatrix}
\mu_0-\alpha & 0 & 0\\
0 & \mu_1-\alpha & 0 \\
-\omega_2 \prescript{}{\left(001\right)}{x_2} \mu_2' & \omega_1 \prescript{}{\left(001\right)}{x_2} \mu_2' & \mu_2 -\alpha -\prescript{}{\left(001\right)}{x_2} \mu_2' 
\end{bmatrix},
\end{equation}
and its eigenvalues are $\lambda_1=\mu_0\left(u_f,-\prescript{}{\left(001\right)}{x_2}+u_h\right)-\alpha$, $\lambda_2=\mu_1\left(u_g,-\prescript{}{\left(001\right)}{x_2}+u_h\right)-\alpha$, and $\lambda_3=-\prescript{}{\left(001\right)}{x_2}\mu_2'\left(-\prescript{}{\left(001\right)}{x_2}+u_h\right)<0$. Hence if
\begin{itemize}
\item $\alpha>\max\left(\mu_0\left(u_f,-\prescript{}{\left(001\right)}{x_2}+u_h\right),\mu_1\left(u_g,-\prescript{}{\left(001\right)}{x_2}+u_h\right)\right)$, then $\prescript{}{\left(001\right)}{\mathcal{E}}$ is a stable node,
\item $\alpha<\max\left(\mu_0\left(u_f,-\prescript{}{\left(001\right)}{x_2}+u_h\right),\mu_1\left(u_g,-\prescript{}{\left(001\right)}{x_2}+u_h\right)\right)$, then $\prescript{}{\left(001\right)}{\mathcal{E}}$ is a saddle point.
\end{itemize}
\item For the boundary equilibrium $\prescript{}{\left(101\right)}{\mathcal{E}}$, the corresponding Jacobian $\prescript{}{\left(101\right)}{J}$ has the following form
\begin{equation}
\prescript{}{\left(101\right)}{J} =
\begin{bmatrix}
\prescript{}{\left(101\right)}{x_0} \left( -\frac{\partial \mu_0}{\partial s_0} -\omega_2 \frac{\partial \mu_0}{\partial s_2}\right) & \omega_1 \prescript{}{\left(101\right)}{x_0} \frac{\partial \mu_0}{\partial s_2} & -\prescript{}{\left(101\right)}{x_0} \frac{\partial \mu_0}{\partial s_2} \\
0 & \mu_1 - \alpha & 0 \\ 
-\omega_2 \prescript{}{\left(101\right)}{x_2} \mu_2' & \omega_1 \prescript{}{\left(101\right)}{x_2} \mu_2' & -\prescript{}{\left(101\right)}{x_2} \mu_2' 
\end{bmatrix},
\end{equation}
where $\prescript{}{\left(101\right)}{x_2}=-\omega_2\prescript{}{\left(101\right)}{x_0}+u_h-\mu_2^{-1}\left(\alpha\right)$. We immediately obtain one of the eigenvalues $\lambda_1=\mu_1\left(\omega_0\prescript{}{\left(101\right)}{x_0}+u_g,\mu_2^{-1}\left(\alpha\right)\right)-\alpha$. The other two eigenvalues are given as the solutions of the following quadratic equation
\begin{equation}\label{eq:char_E4}
\lambda^2 + a_1\lambda + a_0=0,
\end{equation}
where
\begin{align}
a_1 =& \prescript{}{\left(101\right)}{x_0}\left(\frac{\partial \mu_0}{\partial s_0}+\omega_2\frac{\partial \mu_0}{\partial s_2}\right) + \prescript{}{\left(101\right)}{x_2}\mu_2'\\
a_0 =& \prescript{}{\left(101\right)}{x_0}\prescript{}{\left(101\right)}{x_2}\frac{\partial \mu_0}{\partial s_0}\mu_2'.
\end{align}
Since both $a_1>0$, and $a_0>0$, by the Routh-Hurwitz criterion, all roots of the equation (\ref{eq:char_E4}) have negative real parts. Hence if
\begin{itemize}
\item $\alpha > \mu_1\left(\omega_0 \prescript{}{\left(101\right)}{x_0}+u_g,\mu_2^{-1}\left(\alpha\right)\right)$, then $\prescript{}{\left(101\right)}{\mathcal{E}}$ is asymptotically stable,
\item $\alpha < \mu_1\left(\omega_0 \prescript{}{\left(101\right)}{x_0}+u_g,\mu_2^{-1}\left(\alpha\right)\right)$, then $\prescript{}{\left(101\right)}{\mathcal{E}}$ is a saddle point.
\end{itemize}

\item For the boundary equilibrium $\prescript{}{\left(011\right)}{\mathcal{E}}$, the corresponding Jacobian $\prescript{}{\left(011\right)}{J}$ has the following form
\begin{equation}
\prescript{}{\left(011\right)}{J} = 
\begin{bmatrix}
\mu_0-\alpha & 0 & 0 \\
\prescript{}{\left(011\right)}{x_1} \left( \omega_0 \frac{\partial \mu_1}{\partial s_1} -\omega_2 \frac{\partial \mu_1}{\partial s_2} \right) &  \prescript{}{\left(011\right)}{x_1}\left(-\frac{\partial \mu_1}{\partial s_1} + \omega_1 \frac{\partial\mu_1}{\partial s_2}\right) & -\prescript{}{\left(011\right)}{x_1}\frac{\partial \mu_1}{\partial s_2} \\ -\omega_2 \prescript{}{\left(011\right)}{x_2} \mu_2' & \omega_1 \prescript{}{\left(011\right)}{x_2} \mu_2' & -\prescript{}{\left(011\right)}{x_2} \mu_2' 
\end{bmatrix},
\end{equation}
where $\prescript{}{\left(011\right)}{x_2}=\omega_1\prescript{}{\left(011\right)}{x_1}+u_h-\mu_2^{-1}(\alpha)$. We immediately obtain one of the eigenvalues $\lambda_1=\mu_0\left(u_f,\mu_2^{-1}\left(\alpha\right)\right)-\alpha$. The other two eigenvalues are given as the solutions of the following quadratic equation
\begin{equation}\label{eq:char_E5}
\lambda^2+a_1\lambda+a_0=0,
\end{equation}
where
\begin{align}
a_1 =& \prescript{}{\left(011\right)}{x_1}\left( \frac{\partial \mu_1}{\partial s_1}-\omega_1\frac{\partial \mu_1}{\partial s_2} \right)+\prescript{}{\left(011\right)}{x_2}\mu_2',\\
a_0 =& \prescript{}{\left(011\right)}{x_1}\prescript{}{\left(011\right)}{x_2}\frac{\partial \mu_1}{\partial s_1}\mu_2'.
\end{align}
Since both $a_1>0$, and $a_0>0$, by the Routh-Hurwitz criterion, all roots of the equation \eqref{eq:char_E5} have negative real parts. Hence if
\begin{itemize}
\item $\alpha > \mu_0\left(u_f,\mu_2^{-1}\left(\alpha\right)\right)$, then $\prescript{}{\left(011\right)}{\mathcal{E}}$ is asymptotically stable,
\item $\alpha < \mu_0\left(u_f,\mu_2^{-1}\left(\alpha\right)\right)$, then $\prescript{}{\left(011\right)}{\mathcal{E}}$ is a saddle point.
\end{itemize}

\item For the boundary equilibrium $\prescript{}{\left(110\right)}{\mathcal{E}}$, the corresponding Jacobian $\prescript{}{\left(110\right)}{J}$ has the following form
\begin{equation}
\prescript{}{\left(110\right)}{J} =
\begin{bmatrix}
\prescript{}{\left(110\right)}{x_0} \left( -\frac{\partial \mu_0}{\partial s_0} -\omega_2 \frac{\partial \mu_0}{\partial s_2}\right) & \omega_1 \prescript{}{\left(110\right)}{x_0} \frac{\partial \mu_0}{\partial s_2} & -\prescript{}{\left(110\right)}{x_0} \frac{\partial \mu_0}{\partial s_2} \\
\prescript{}{\left(110\right)}{x_1} \left( \omega_0 \frac{\partial \mu_1}{\partial s_1} -\omega_2 \frac{\partial \mu_1}{\partial s_2} \right) &  \prescript{}{\left(110\right)}{x_1}\left(-\frac{\partial \mu_1}{\partial s_1} + \omega_1 \frac{\partial\mu_1}{\partial s_2}\right) & -\prescript{}{\left(110\right)}{x_1}\frac{\partial \mu_1}{\partial s_2} \\ 0 & 0 & \mu_2 -\alpha
\end{bmatrix}.
\end{equation}
We immediately obtain one eigenvalue $\lambda_1=\mu_2-\alpha$. The other two eigenvalues are solutions of the following quadratic equation
\begin{equation}
\lambda^2 + a_1\lambda + a_0 = 0,
\end{equation}
where
\begin{align}
a_1 &= \prescript{}{\left(110\right)}{x_0} \left( \frac{\partial \mu_0}{\partial s_0} +\omega_2 \frac{\partial \mu_0}{\partial s_2}\right) + \prescript{}{\left(110\right)}{x_1}\left(\frac{\partial \mu_1}{\partial s_1} - \omega_1 \frac{\partial\mu_1}{\partial s_2}\right),\\
a_0 &= \prescript{}{\left(110\right)}{x_0}\prescript{}{\left(110\right)}{x_1} \left( \frac{\partial \mu_0}{\partial s_0} +\omega_2 \frac{\partial \mu_0}{\partial s_2}\right)\left(\frac{\partial \mu_1}{\partial s_1} - \omega_1 \frac{\partial\mu_1}{\partial s_2}\right) \\
&\phantom{{}=} - \omega_1 \prescript{}{\left(110\right)}{x_0}\prescript{}{\left(110\right)}{x_1}\frac{\partial \mu_0}{\partial s_2}\left(\omega_0\frac{\partial \mu_1}{\partial s_1}-\omega_2\frac{\partial \mu_1}{\partial s_2}\right). \notag
\end{align}
We have $a_1>0$, and 
\begin{equation}
a_0>0 \iff \frac{\partial \mu_0}{\partial s_0}\frac{\partial \mu_1}{\partial s_1}-\omega_1\frac{\partial \mu_0}{\partial s_0}\frac{\partial \mu_1}{\partial s_2} + \left(\omega_2 -\omega_0\omega_1\right)\frac{\partial \mu_0}{\partial s_2}\frac{\partial \mu_1}{\partial s_1} > 0.
\end{equation}
Hence if
\begin{itemize}
    \item $\alpha>\mu_2$ and $\frac{\partial \mu_0}{\partial s_0}\frac{\partial \mu_1}{\partial s_1}-\omega_1\frac{\partial \mu_0}{\partial s_0}\frac{\partial \mu_1}{\partial s_2} + \left(\omega_2 -\omega_0\omega_1\right)\frac{\partial \mu_0}{\partial s_2}\frac{\partial \mu_1}{\partial s_1}>0$ (where all the functions are evaluated at the steady state), then $\prescript{}{\left(110\right)}{\mathcal{E}}$ is asymptotically stable,
    \item $\alpha<\mu_2$ or $\frac{\partial \mu_0}{\partial s_0}\frac{\partial \mu_1}{\partial s_1}-\omega_1\frac{\partial \mu_0}{\partial s_0}\frac{\partial \mu_1}{\partial s_2} + \left(\omega_2 -\omega_0\omega_1\right)\frac{\partial \mu_0}{\partial s_2}\frac{\partial \mu_1}{\partial s_1}<0$, then $\prescript{}{\left(110\right)}{\mathcal{E}}$ is unstable.
\end{itemize}

\item For the interior equilibrium $\prescript{}{\left(111\right)}{\mathcal{E}}$, the corresponding Jacobian $\prescript{}{\left(111\right)}{J}$ has the following form
\begin{equation}
\prescript{}{\left(111\right)}{J} =
\begin{bmatrix}
 \overset{*}{x}_0 \left( -\frac{\partial \mu_0}{\partial s_0} -\omega_2 \frac{\partial \mu_0}{\partial s_2}\right) & \omega_1 \overset{*}{x}_0 \frac{\partial \mu_0}{\partial s_2} & -\overset{*}{x}_0 \frac{\partial \mu_0}{\partial s_2} \\
\overset{*}{x}_1 \left( \omega_0 \frac{\partial \mu_1}{\partial s_1} -\omega_2 \frac{\partial \mu_1}{\partial s_2} \right) & \overset{*}{x}_1\left(-\frac{\partial \mu_1}{\partial s_1} + \omega_1 \frac{\partial\mu_1}{\partial s_2}\right) & -\overset{*}{x}_1\frac{\partial \mu_1}{\partial s_2} \\ -\omega_2 \overset{*}{x}_2 \mu_2' & \omega_1 \overset{*}{x}_2 \mu_2' & -\overset{*}{x}_2 \mu_2' 
\end{bmatrix},
\end{equation}
\end{itemize}
and its eigenvalues are solutions of the following cubic equation
\begin{equation}
    \lambda^3 + a_2\lambda^2 + a_1\lambda + a_0 = 0,
\end{equation}
where
\begin{align}
    a_2 &= -\overset{*}{x}_0 \left( -\frac{\partial \mu_0}{\partial s_0} -\omega_2 \frac{\partial \mu_0}{\partial s_2}\right) - \overset{*}{x}_1\left(-\frac{\partial \mu_1}{\partial s_1} + \omega_1 \frac{\partial\mu_1}{\partial s_2}\right) +\overset{*}{x}_2 \mu_2',\\
    a_1 &= \overset{*}{x}_1 \frac{\partial \mu_1}{\partial s_1}\left(\overset{*}{x}_0\frac{\partial\mu_0}{\partial s_0}-\left(\omega_0\omega_1-\omega_2\right)\overset{*}{x}_0\frac{\partial \mu_0}{\partial s_2}+\overset{*}{x}_2\mu_2' \right)+\overset{*}{x}_0\frac{\partial \mu_0}{\partial s_0}\left(-\omega_1\overset{*}{x}_1\frac{\partial\mu_1}{\partial s_2}+\overset{*}{x}_2\mu_2'\right) ,\\
    a_0 &= \overset{*}{x}_0\overset{*}{x}_1\overset{*}{x}_2\frac{\partial \mu_0}{\partial s_0}\frac{\partial \mu_1}{\partial s_1}\mu_2'.
\end{align}
By the Routh-Hurwitz criterion, all eigenvalues have negative real parts if and only if $a_2>0$, $a_0>0$, and $a_2a_1>a_0$. We have $a_2>0$, and $a_0>0$ always, and
\begin{equation}\label{eq:int_eq_stability}
\begin{split}
    a_2a_1>a_0 \iff \left[-\overset{*}{x}_0 \left( -\frac{\partial \mu_0}{\partial s_0} -\omega_2 \frac{\partial \mu_0}{\partial s_2}\right) - \overset{*}{x}_1\left(-\frac{\partial \mu_1}{\partial s_1} + \omega_1 \frac{\partial\mu_1}{\partial s_2}\right) +\overset{*}{x}_2 \mu_2'\right]\cdot \\
    \left[\overset{*}{x}_1 \frac{\partial \mu_1}{\partial s_1}\left(\overset{*}{x}_0\frac{\partial\mu_0}{\partial s_0}-\left(\omega_0\omega_1-\omega_2\right)\overset{*}{x}_0\frac{\partial \mu_0}{\partial s_2}+\overset{*}{x}_2\mu_2' \right)+\overset{*}{x}_0\frac{\partial \mu_0}{\partial s_0}\left(-\omega_1\overset{*}{x}_1\frac{\partial\mu_1}{\partial s_2}+\overset{*}{x}_2\mu_2'\right)\right] \\ -\overset{*}{x}_0\overset{*}{x}_1\overset{*}{x}_2\frac{\partial \mu_0}{\partial s_0}\frac{\partial \mu_1}{\partial s_1}\mu_2'>0.
\end{split}
\end{equation}

Thus $\prescript{}{\left(111\right)}{\mathcal{E}}$ is asymptotically stable if $a_2a_1>a_0$, and is unstable if $a_2a_1<a_0$.

We gather the results concerning the existence and local stability of the equilibria of \eqref{eq:system_reduced} in table \ref{table:equilibria}. All the functions in the "Local stability" column are evaluated at the corresponding steady states, and all the symbols are given by equations \eqref{eq:101_star_table}-\eqref{eq:111_a0_table}. The conditions for existence of the $\prescript{}{\left(110\right)}{\mathcal{E}}$ equilibrium are discussed in subsection \ref{subsec:exist_uniq}.

\begin{table}[H]
\centering
    \begin{tabular}{C{3cm} C{5cm} C{5cm}}
        \hline
        Equilibrium  & Existence & Local stability \\
        \hline
        $_{\left(000\right)}\mathcal{E}$ & Always & $\alpha>\max(\mu_0,\mu_1,\mu_2)$ \\
        
        $ $ & $ $ & $ $\\
        
        $_{\left(100\right)}\mathcal{E}$ & $\alpha < \mu_0\left(u_f,u_h\right)$ & $\alpha>\max\left(\mu_1,\mu_2\right)$ \\
        
        $ $ & $ $ & $ $ \\
        
        $_{\left(010\right)}\mathcal{E}$ & $\alpha < \mu_1\left(u_g,u_h\right)$ & $\alpha>\max\left(\mu_0,\mu_2\right)$\\
        
        $ $ & $ $ & $ $ \\
        
        $\prescript{}{\left(001\right)}{\mathcal{E}}$ & $\alpha<\mu_2\left(u_h\right)$ & $\alpha>\max\left(\mu_0,\mu_1\right)$\\
        
        $ $ & $ $ & $ $ \\
        
        $\prescript{}{\left(101\right)}{\mathcal{E}}$ & $\prescript{}{\left(101\right)}{\star}$ & $\alpha > \mu_1$\\
        
        $ $ & $ $ & $ $ \\
        
        $\prescript{}{\left(011\right)}{\mathcal{E}}$ &
        $\prescript{}{\left(011\right)}{\star}$ & $\alpha > \mu_0$\\
        
        $ $ & $ $ & $ $ \\
        
        $\prescript{}{\left(110\right)}{\mathcal{E}}$ & - & $\alpha>\mu_2$ \text{ and } $\prescript{}{\left(110\right)}{a_0}>0$\\
        
        $ $ & $ $ & $ $ \\
        
        $\prescript{}{\left(111\right)}{\mathcal{E}}$ & $\prescript{}{\left(111\right)}{\star}$ & $\prescript{}{\left(111\right)}{a_2}\prescript{}{\left(111\right)}{a_1}-\prescript{}{\left(111\right)}{a_0}>0$,\\
        $ $ & $ $ & $ $ \\
    \end{tabular}
    \caption{Equilibria of system \eqref{eq:system_reduced} together with their local stability.}
    \label{table:equilibria}
\end{table}

\begin{align}
    \prescript{}{\left(101\right)}{\star} &= \alpha\in\left(\mu_0\left(u_f -\min{\left(u_f,\frac{u_h-\mu_2^{-1}\left(\alpha\right)}{\omega_2}\right)},\mu_2^{-1}\left(\alpha\right) \right),\mu_0\left(u_f,\mu_2^{-1}\left(\alpha\right)\right)\right),\label{eq:101_star_table} \\
    \prescript{}{\left(011\right)}{\star} &= \alpha\in \left[0,\mu_1\left(u_g-\max{\left(0,\frac{\mu_2^{-1}\left(\alpha\right)-u_h}{\omega_1}\right)},\mu_2^{-1}\left(\alpha\right)\right)\right),\\
    &\phantom{{}=} \text{ and } \alpha<\mu_2\left(\omega_1u_g+u_h\right), \notag \\
    \prescript{}{\left(110\right)}{a_0}&= \frac{\partial \mu_0}{\partial s_0}\frac{\partial \mu_1}{\partial s_1}-\omega_1\frac{\partial \mu_0}{\partial s_0}\frac{\partial \mu_1}{\partial s_2} + \left(\omega_2 -\omega_0\omega_1\right)\frac{\partial \mu_0}{\partial s_2}\frac{\partial \mu_1}{\partial s_1},\\
    \prescript{}{\left(111\right)}{\star} &= \alpha\in \Bigg[ 0, \min\Bigg( \mu_0\left(u_f,\mu_2^{-1}\left(\alpha\right)\right), \mu_1\Bigg(\omega_0\overset{*}{x}_0-\max\left(0,\frac{\omega_2\overset{*}{x}_0-u_h}{\omega_1}\right) \\ &\phantom{{}=} + u_g,\mu_2^{-1}\left(\alpha\right)\Bigg), \mu_2\left(-\omega_2\overset{*}{x}_0+\omega_1\overset{*}{x}_1+u_h\right)\Bigg) \Bigg), \notag \\
    \prescript{}{\left(111\right)}{a_2} &= -\overset{*}{x}_0 \left( -\frac{\partial \mu_0}{\partial s_0} -\omega_2 \frac{\partial \mu_0}{\partial s_2}\right) - \overset{*}{x}_1\left(-\frac{\partial \mu_1}{\partial s_1} + \omega_1 \frac{\partial\mu_1}{\partial s_2}\right) +\overset{*}{x}_2 \mu_2',\\
    \prescript{}{\left(111\right)}{a_1} &= \overset{*}{x}_1 \frac{\partial \mu_1}{\partial s_1}\left(\overset{*}{x}_0\frac{\partial\mu_0}{\partial s_0}-\left(\omega_0\omega_1-\omega_2\right)\overset{*}{x}_0\frac{\partial \mu_0}{\partial s_2}+\overset{*}{x}_2\mu_2' \right)\\
    &\phantom{{}=}+\overset{*}{x}_0\frac{\partial \mu_0}{\partial s_0}\left(-\omega_1\overset{*}{x}_1\frac{\partial\mu_1}{\partial s_2}+\overset{*}{x}_2\mu_2'\right),\notag\\
    \prescript{}{\left(111\right)}{a_0} &= \overset{*}{x}_0\overset{*}{x}_1\overset{*}{x}_2\frac{\partial \mu_0}{\partial s_0}\frac{\partial \mu_1}{\partial s_1}\mu_2'.\label{eq:111_a0_table}
\end{align}

\section{Analysis of the full system}

\subsection{Periodic orbits on the faces}\label{subsec:faces}
We begin by ruling out possibility of having a periodic orbit in one of the invariant faces of $\Omega$. This can be done by using general forms of prototypes $\mu_0$, $\mu_1$ and $\mu_2$.

First, consider system \eqref{eq:system_reduced} on the part of $\Omega$ with $x_0=0$, i.e.,

\begin{align}\label{x1_x2_system}
    \begin{cases}
        \begin{split}
            x_1' &= \left(-\alpha +\mu_1\left(-x_1 +u_g,\omega_1x_1-x_2 +u_h\right)\right)x_1,\\
            x_2' &= \left(-\alpha + \mu_2\left(\omega_1x_1 -x_2 +u_h\right)\right)x_2.
        \end{split}
    \end{cases}
\end{align}

The domain for system \eqref{x1_x2_system} is given by the following set $\Omega_{12}$:
\begin{equation}
    \Omega_{12}=\left\{ \left(x_1,x_2\right)\in \mathbb{R}^2: 0\leq x_1\leq u_g, 0\leq x_2\leq u_h+\omega_1x_1 \right\}.
\end{equation}
Notice that no periodic orbit can intersect the axes $x_1=0$ or $x_2=0$ since they are invariant. Now, let us define an auxiliary function
\begin{equation}
    \varphi_0 \left(x_1,x_2\right) = \frac{1}{x_1x_2}, \quad \left(x_1,x_2\right)\in \Omega_{12}\setminus \left( \left\{ x_1=0 \right\}\cup\left\{x_2=0\right\} \right).
\end{equation}
Then
\begin{equation}
    \nabla \cdot \left(\varphi_0\left(-\alpha +\mu_1\right)x_1 , \varphi_0\left(-\alpha +\mu_2\right)x_2 \right) = \frac{-\partial_{s_1}\mu_1 +\omega_1\partial_{s_2} \mu_1}{x_2} - \frac{\mu'_2}{x_1} <0
\end{equation}
for all $\left(x_1,x_2\right)$ in the domain of $\varphi_0$. Thus, by the Dulac's Criterion \cite{kuzn}, there are no periodic orbits in the $x_1x_2$ face.

Now, consider system \eqref{eq:system_reduced} on the part of $\Omega$ with $x_1=0$, i.e.,

\begin{align}\label{x0_x2_system}
    \begin{cases}
        \begin{split}
            x_0' &= \left(-\alpha +\mu_0\left(-x_0 +u_f,-\omega_2x_0-x_2 +u_h\right)\right)x_0,\\
            x_2' &= \left(-\alpha + \mu_2\left(-\omega_2x_0 -x_2 +u_h\right)\right)x_2,
        \end{split}
    \end{cases}
\end{align}

defined on $\Omega_{02}$ given by

\begin{equation}\label{x0_x2_domain}
    \Omega_{02}=\left\{ \left(x_0,x_2\right)\in \mathbb{R}^2:  0\leq x_0 \leq \min\left\{u_f,\frac{u_h-x_2}{\omega_2}\right\}, 0\leq x_2\leq -\omega_2x_0+u_h \right\}.
\end{equation}

For auxiliary function
\begin{equation}
    \varphi_1 \left(x_0,x_2\right) = \frac{1}{x_0x_2}, \quad \left(x_0,x_2\right)\in \Omega_{02}\setminus \left( \left\{ x_0=0 \right\}\cup\left\{x_2=0\right\} \right)
\end{equation}

we have

\begin{equation}
    \nabla \cdot \left(\varphi_1\left(-\alpha +\mu_1\right)x_0 , \varphi_1\left(-\alpha +\mu_2\right)x_2 \right) = \frac{-\partial_{s_0}\mu_0 -\omega_2\partial_{s_2} \mu_0}{x_2} - \frac{\mu'_2}{x_0} <0.
\end{equation}

This, together with the fact that the axes $x_0=0$, $x_2=0$ are invariant, shows that there are no periodic orbits in the $x_0x_2$ face.

Finally, consider system \eqref{eq:system_reduced} on the part of $\Omega$ with $x_2=0$, i.e.,

\begin{align}\label{x0_x1_system}
    \begin{cases}
        \begin{split}
            x_0' &= \left(-\alpha +\mu_0\left(-x_0 +u_f,-\omega_2x_0+\omega_1x_1 +u_h\right)\right)x_0,\\
            x_1' &= \left(-\alpha +\mu_1\left(\omega_0x_0-x_1 +u_g,-\omega_2x_0+\omega_1x_1 +u_h\right)\right)x_1.
        \end{split}
    \end{cases}
\end{align}

defined on $\Omega_{01}$ given by

\begin{equation}
    \Omega_{01} =\left\{ \left(x_0,x_1\right)\in\mathbb{R}^2: 0\leq x_0 \leq u_f, \max\left\{0,\frac{\omega_2x_0-u_h}{\omega_1}\right\} \leq x_1 \leq \omega_0x_0 + u_g \right\}.
\end{equation}

Analogously to the previous cases, we define

\begin{equation}
    \varphi_2 \left(x_0,x_1\right) = \frac{1}{x_0x_1}, \quad \left(x_0,x_1\right)\in \Omega_{01}\setminus \left( \left\{ x_0=0 \right\}\cup\left\{x_1=0\right\} \right)
\end{equation}

and compute

\begin{equation}
    \nabla \cdot \left(\varphi\left(-\alpha +\mu_0\right)x_0 , \varphi\left(-\alpha +\mu_1\right)x_1 \right) = \frac{-\partial_{s_0}\mu_0 -\omega_2\partial_{s_2} \mu_0}{x_1} + \frac{-\partial_{s_1}\mu_1+\omega_1\partial_{s_2}\mu_1}{x_0} <0.
\end{equation}

Since the axes $x_0=0$, $x_1=0$ are invariant, by Dulac's Criterion, there are no periodic orbits in the $x_0x_1$ face.

\subsection{Hopf Bifurcation}\label{Hopf_bifurcation}
In this work we are especially interested in developing a more systematic approach to studying the Hopf bifurcation of the interior equilibrium. That Hopf bifurcation that occurs in this model was previously observed numerically \cite{sari2017}. The occurrence of a stable periodic orbit in system \eqref{eq:system_reduced} represents a situation in which all three populations of microorganisms oscillate indefinitely, and as a consequence, the substrate concentrations fluctuate. The characteristic polynomial of the Jacobian $\prescript{}{\left(111\right)}{J}$ corresponding to the interior equilibrium is given by

\begin{equation}
    \lambda^3 + a_2\lambda^2 + a_1\lambda + a_0 = 0,
\end{equation}
where
\begin{align}
    a_2 &= -\overset{*}{x}_0 \left( -\frac{\partial \mu_0}{\partial s_0} -\omega_2 \frac{\partial \mu_0}{\partial s_2}\right) - \overset{*}{x}_1\left(-\frac{\partial \mu_1}{\partial s_1} + \omega_1 \frac{\partial\mu_1}{\partial s_2}\right) +\overset{*}{x}_2 \mu_2',\\
    a_1 &= \overset{*}{x}_1 \frac{\partial \mu_1}{\partial s_1}\left(\overset{*}{x}_0\frac{\partial\mu_0}{\partial s_0}-\left(\omega_0\omega_1-\omega_2\right)\overset{*}{x}_0\frac{\partial \mu_0}{\partial s_2}+\overset{*}{x}_2\mu_2' \right)+\overset{*}{x}_0\frac{\partial \mu_0}{\partial s_0}\left(-\omega_1\overset{*}{x}_1\frac{\partial\mu_1}{\partial s_2}+\overset{*}{x}_2\mu_2'\right) ,\\
    a_0 &= \overset{*}{x}_0\overset{*}{x}_1\overset{*}{x}_2\frac{\partial \mu_0}{\partial s_0}\frac{\partial \mu_1}{\partial s_1}\mu_2',
\end{align}
and the coefficients $a_2$, $a_1$, and $a_0$ depend on the parameters $u_f$, $u_g$, $u_h$, and $\alpha$. The coefficients $a_2$ and $a_0$ are sign-definite (they are both positive), and $a_1$ might possibly change sign. Let us first notice that since the polynomial has order three, a real eigenvalue always exists. By the Routh-Hurwitz criterion, the above polynomial has a pair of purely imaginary eigenvalues if and only if \begin{equation}
    a_2a_1=a_0 \quad \text{(which implies $a_1>0$).}
\end{equation}
In that case we also have
\begin{equation}
    \lambda^3 + a_2\lambda^2 + a_1\lambda + a_0 = \lambda^3 + a_2\lambda^2 + a_1\lambda + a_2a_1 = \left(\lambda+a_2\right)\left(\lambda^2+a_1\right).
\end{equation}
Hence the eigenvalues are $\lambda_1=-a_2$ and $\lambda_{2,3}=\pm\sqrt{a_1}i$. Since eigenvalues are continuous functions of the parameters, we can see that if there is some $\left(u_f,u_g,u_h,\alpha\right)=\left(u_f^\ast,u_g^\ast,u_h^\ast,\alpha^\ast\right)$ such that $a_2a_1=a_0$, then if we denote by $\lambda_1$ the always present real eigenvalue, there is some $\delta>0$ such that for $\left\|\left(u_f,u_g,u_h,\alpha\right)-\left(u_f^\ast,u_g^\ast,u_h^\ast,\alpha^\ast\right)\right\|<\delta$ we always have $\lambda_1<0$. By lemma $5.1$, section $5.2$ in \cite{kuzn}, this implies the existence of a parameter-dependent, smooth, attracting, two dimensional, center manifold $W^c_{\left(u_f,u_g,u_h,\alpha\right)}$. In the following analysis, we consider only parameters that are in the $\delta$-neighborhood of $\left(u_f,u_g,u_h,\alpha\right)$ in order to ensure that the real eigenvalue $\lambda_1$ is negative.

By the Routh-Hurwitz criterion (since we just shown that when $\prescript{}{\left(111\right)}{\mathcal{E}}$ exists that
$a_0$ and $a_2$ are always positive), a Hopf bifurcation occurs when the expression $a_2a_1-a_0$ changes sign as a parameter varies. This ensures that  the real part of a pair of  complex eigenvalues with nonzero imaginary part passes through $0$ and hence changes sign.  This is related to the transversality condition:  the derivative of the real part of the eigenvalue with respect to the bifurcation parameter evaluated at the critical value when the real parts are zero is non-zero. We check this condition for specific forms of the functions $\mu_0$, $\mu_1$, and $\mu_2$. For the prototypes proposed in (\ref{eq:growth_fcns}), and with the values of parameters from Table \ref{table0} fixed, the function
\begin{equation}\label{eq:hopf_bif_fcn}
f\left(u_f,u_g,u_h,\alpha\right) = a_2a_1-a_0
\end{equation}
is an algebraic function in $u_f$, $u_g$, $u_h$, and $\alpha$. Hence fixing all the parameters except one makes the function $f$ a polynomial, the order of which depends on the choice of the free parameter. Specifically if we choose:
\begin{itemize}
    \item $\alpha$ - free parameter $\implies$ $f$ has order $27$,
    \item $u_f$ - free parameter $\implies$ $f$ has order $3$,
    \item $u_g$ - free parameter $\implies$ $f$ has order $3$,
    \item $u_h$ - free parameter $\implies$ $f$ has order $2$.
\end{itemize}
We construct bifurcation diagrams to explore the possibility of Hopf bifurcations as $\alpha$ varies in subsection \ref{subsec:bif_diag}, and begin our theoretical analysis by choosing $u_f$ as the free parameter. We have
\begin{equation}\label{eq:f_uf_fcn}
    f_{u_f}\left(u_f\right) = a_2a_1-a_0 = b_3u_f^3 + b_2u_f^2 + b_1u_f + b_0,
\end{equation}
and we assume that there is a value $u_f=u_f^\ast$ such that $f_{u_f}(u_f^\ast)=0$. We want to find  conditions on the coefficients of $f_{u_f}$ that guarantee that the derivative of $a_2 a_1-a_0$ with respect to $u_f$ is not equal to zero when $a_2 a_1-a_0=0$, i.e., that $u_f^\ast$ is not a local extremum of $f_{u_f}$. We have
\begin{equation}
    f'_{u_f}\left(u_f\right) = 3b_3u_f^2 + 2b_2u_f + b_1.
\end{equation}
The necessary condition for $u_f^\ast$ to be a local extremum for $f_{u_f}$ is $f'_{u_f}(u_f^\ast)=0$, that is
\begin{equation}\label{eq:u_f_loc_extr}
    u_f^\ast = \frac{-b_2 \pm \sqrt{b_2^2-3b_1b_3}}{3b_3}. 
\end{equation}
We can derive sufficient conditions for $u_f^\ast$ to be an extremum (for example by computing the second derivative of $f_{u_f}$), but the condition (\ref{eq:u_f_loc_extr}) is already very restrictive and will be sufficient for our work. We have thus obtained a sufficient condition for a Hopf bifurcation.

If we choose $u_g$ as the free parameter, we have
\begin{equation}\label{eq:f_ug_fcn}
    f_{u_g}\left(u_h\right) = c_3u_g^3 + c_2u_g^2 + c_1u_g + c_0,
\end{equation}
and with the assumption that $f_{u_g}\left(u_g^\ast\right)=0$, by a similar analysis as in the previous case, we obtain an analogous sufficient condition for a Hopf bifurcation in $u_g$.

Finally, if we choose $u_h$ as the free parameter, we have
\begin{equation}\label{eq:f_uh_fcn}
    f_{u_h}\left(u_h\right) = d_2u_h^2 + d_1u_h + d_0.
\end{equation}
Once again, we assume that $f_{u_h}\left(u_h^\ast\right)=0$, that is
\begin{equation}
    u_h^\ast = \frac{-d_1 \pm \sqrt{d_1^2-4d_2d_0}}{2d_2}.
\end{equation}
Here, $u_h^\ast$ is the local extremum if and only if the discriminant of equation (\ref{eq:f_uh_fcn}) is zero, i.e., if $d_1^2-4d_2d_0=0$.

We summarize our results in the following theorem.
\begin{theorem}\label{thm:Hopf}
Consider system (\ref{eq:system_reduced}) with the prototypes given by (\ref{eq:growth_fcns}) and with the values of parameters from Table \ref{table0} fixed. Assume that there exists a point $\left(u_f,u_g,u_h,\alpha\right)=\left(u_f^\ast,u_g^\ast,u_h^\ast,\alpha^\ast\right)$ such that $f\left(u_f^\ast,u_g^\ast,u_h^\ast,\alpha^\ast\right)=0$ for $f$ defined in (\ref{eq:hopf_bif_fcn}). Then $f_{u_f}\left(u_f\right)=f\left(u_f,u_g^\ast,u_h^\ast,\alpha^\ast\right)$, $f_{u_g}\left(u_g\right)=f\left(u_f^\ast,u_g,u_h^\ast,\alpha^\ast\right)$, and $f\left(u_h\right)=f\left(u_f^\ast,u_g^\ast,u_h,\alpha^\ast\right)$ are given by the equations (\ref{eq:f_uf_fcn}), (\ref{eq:f_ug_fcn}), and (\ref{eq:f_uh_fcn}), respectively. Also, there exists $\delta >0$ such that if  $\left\|\left(u_f,u_g,u_h,\alpha\right)-\left(u_f^\ast,u_g^\ast,u_h^\ast,\alpha^\ast\right)\right\|<\delta$, then
\begin{enumerate}[I.]
    \item if
    \begin{equation}
        u_f^\ast \neq \frac{-b_2 \pm \sqrt{b_2^2-3b_1b_3}}{3b_3},
    \end{equation}
    then there is a Hopf bifurcation in $u_f$ at $u_f=u_f^\ast$,
    \item if
    \begin{equation}
        u_g^\ast \neq \frac{-c_2 \pm \sqrt{c_2^2-3c_1c_3}}{3c_3},
    \end{equation}
    then there is a Hopf bifurcation in $u_g$ at $u_g=u_g^\ast$,
    \item if
    \begin{equation}
    d_1^2-4d_2d_0\neq0,
    \end{equation}
    then there is a Hopf bifurcation in $u_h$ at $u_h=u_h^\ast$.
\end{enumerate}
\end{theorem}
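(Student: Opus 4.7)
My plan is to derive each item from a standard transversality-plus-factorization argument, using $f = a_2 a_1 - a_0$ as a computable proxy for the real parts of the pair of complex eigenvalues. First I would record what was already noted just before the theorem: at any parameter point with $a_2 a_1 = a_0$ (and $a_2, a_1, a_0 > 0$), the characteristic polynomial factors as $(\lambda + a_2)(\lambda^2 + a_1)$, producing the real eigenvalue $\lambda_1 = -a_2 < 0$ and a purely imaginary pair $\lambda_{2,3} = \pm i \sqrt{a_1}$. Since eigenvalues depend continuously on parameters, in a $\delta$-neighborhood of $(u_f^\ast, u_g^\ast, u_h^\ast, \alpha^\ast)$ the real eigenvalue stays negative, so lemma 5.1, section 5.2 of \cite{kuzn} supplies a smooth, attracting, two-dimensional center manifold on which the Hopf bifurcation of system \eqref{eq:system_reduced} reduces to a planar one.

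Next I would fix all parameters except $u_f$ and write $\lambda_{2,3}(u_f) = \beta(u_f) \pm i \omega(u_f)$ for the continuous branch of complex eigenvalues with $\beta(u_f^\ast) = 0$ and $\omega(u_f^\ast) = \sqrt{a_1(u_f^\ast)} > 0$. Using Vieta's identities one obtains $(a_2 + 2\beta)(\beta^2 + \omega^2) = a_0$ and $(\beta^2 + \omega^2) - 2\beta(a_2 + 2\beta) = a_1$. Differentiating both with respect to $u_f$ at the critical value, and specializing to $\beta = 0$, $\omega^2 = a_1$, $a_2 a_1 = a_0$, I expect to arrive at
\begin{equation}
2(a_2^2 + a_1)\, \beta'(u_f^\ast) = -\frac{d}{du_f}(a_2 a_1 - a_0)\bigg|_{u_f = u_f^\ast} = -f'_{u_f}(u_f^\ast).
\end{equation}
Because $a_2^2 + a_1 > 0$, transversality $\beta'(u_f^\ast) \neq 0$ is then equivalent to $f'_{u_f}(u_f^\ast) \neq 0$, and the same computation covers the $u_g$ and $u_h$ directions.

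With the transversality reduction in hand, the theorem becomes elementary algebra. Differentiating the cubic \eqref{eq:f_uf_fcn} gives $f'_{u_f}(u_f) = 3 b_3 u_f^2 + 2 b_2 u_f + b_1$, whose roots are $(-b_2 \pm \sqrt{b_2^2 - 3 b_1 b_3})/(3 b_3)$, so excluding these values at $u_f^\ast$ yields the transversality condition of item I; item II is identical with $(c_3, c_2, c_1, c_0)$ replacing $(b_3, b_2, b_1, b_0)$. For item III the polynomial $f_{u_h}$ is only quadratic, so $f'_{u_h}$ is linear with unique root $-d_1/(2 d_2)$, and that root coincides with the prescribed $u_h^\ast = (-d_1 \pm \sqrt{d_1^2 - 4 d_2 d_0})/(2 d_2)$ precisely when $d_1^2 - 4 d_2 d_0 = 0$, which is the assumed avoidance condition. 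The classical Hopf bifurcation theorem applied on the center manifold then delivers a periodic branch bifurcating from $\prescript{}{(111)}{\mathcal{E}}$.

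The main obstacle I anticipate is the algebraic identity tying $\beta'$ to $f'$: it requires differentiating the implicit eigenvalue relations at a degenerate point and carefully using the factorization $(\lambda + a_2)(\lambda^2 + a_1)$ to isolate the real part. Everything else is bookkeeping, in particular reading off the polynomial coefficients $b_i, c_i, d_i$ from the explicit rational formulas for $a_0, a_1, a_2$ at $\prescript{}{(111)}{\mathcal{E}}$ as functions of the chosen free parameter. Note that the theorem asserts only \emph{existence} of a Hopf bifurcation and not its criticality, so no first Lyapunov coefficient is computed at this stage; sub- or supercriticality is deferred to a subsequent analysis.
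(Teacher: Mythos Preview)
Your proposal is correct and follows essentially the same route as the paper: factor the characteristic cubic at the critical point as $(\lambda+a_2)(\lambda^2+a_1)$, invoke the parameter-dependent center manifold from Kuznetsov to isolate the complex pair, reduce the transversality condition to $f'\neq 0$ at the root of $f=a_2a_1-a_0$, and then read off the polynomial-coefficient conditions in each of $u_f$, $u_g$, $u_h$. The only place you go beyond the paper is the explicit Vieta/implicit-differentiation identity $2(a_2^2+a_1)\beta' = -f'$, which the paper asserts only informally (``This is related to the transversality condition''); your derivation is correct and makes that step rigorous.
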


We now illustrate the theoretical results with the numerical simulations. To approximate values of the equilibria we used Maple software \cite{Maple:2018}, rounding all the values to $6$ significant digits. For the choice of parameters given in Table \ref{table0} and
\begin{equation}
    \alpha=0.01, \quad u_f=0.5, \quad u_g=0.0006,
\end{equation}
it follows that the only zero of \eqref{eq:f_uh_fcn} occurs for $u_h=0.102520$. In Figure~\ref{fig:Hopf_uh_before} we plot the phase space for $u_h=0.05$ (just before the Hopf bifurcation) using the ode15s solver from \cite{MATLAB:2018}. For this set of parameters, we have the following approximate values of the equilibria

\begin{equation}
\begin{split}
\prescript{}{\left(000\right)}{\mathcal{E}}&=\left(0,0,0\right) \quad \text{(unstable)},\\
\prescript{}{\left(100\right)}{\mathcal{E}}&=\left(0.000299015,0,0\right) \quad \text{(stable)},\\ \prescript{}{\left(001\right)}{\mathcal{E}}&=\left(0,0,0.0473693\right) \quad \text{(unstable)},\\ 
\prescript{}{\left(110\right)}{\mathcal{E}}^{\left(1\right)}&=\left(0.0545964, 0.00534486,0\right) \quad \text{(unstable)},\\ \prescript{}{\left(110\right)}{\mathcal{E}}^{\left(2\right)}&=\left(0.489465, 0.0487183,0\right) \quad \text{(unstable)},\\
\prescript{}{\left(111\right)}{\mathcal{E}}&=\left(0.306611,0.0520205,36.2277\right) \quad \text{(unstable)},
\end{split}
\end{equation}

\begin{figure}[H]
    \centering
    {\includegraphics[width=0.8\textwidth,height=0.6\textwidth]{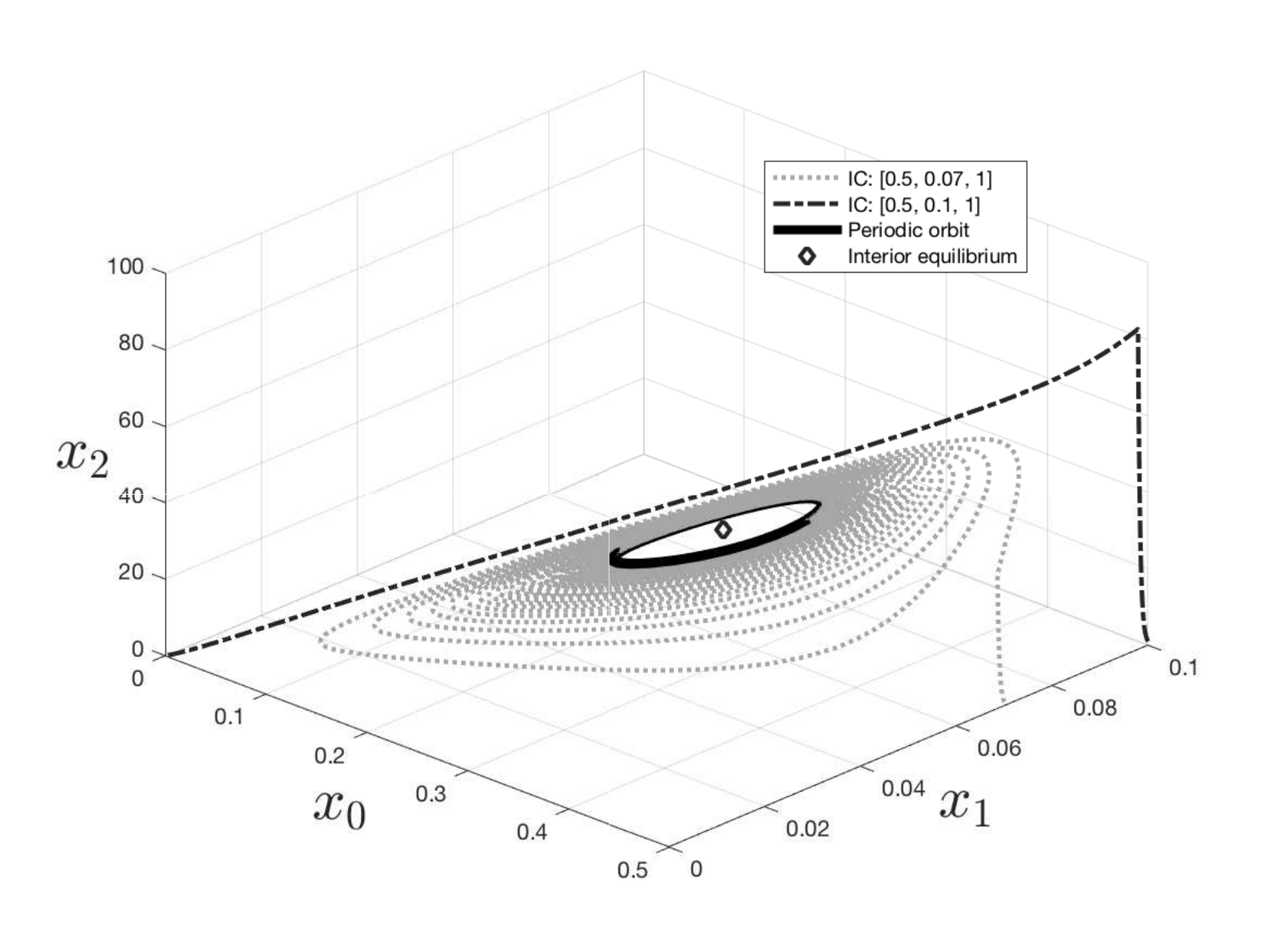}}
    \caption{Phase space of system \eqref{eq:system_reduced} with $\alpha=0.01$, $u_f=0.5$, $u_g=0.0006$, and $u_h=0.05$.}
    \label{fig:Hopf_uh_before}
\end{figure}

We can see that there exists a stable periodic orbit in the system, but depending on the initial conditions, the solution might also converge to the boundary equilibrium $\prescript{}{\left(100\right)}{\mathcal{E}}$. Thus in this case we observe bistability.

We now repeat the simulations for $u_h=0.3$ (after the predicted Hopf bifurcation), presented in Figure \ref{fig:Hopf_uh_after}. With all the other parameters set to the same values as in the previous case, we have the following equilibria

\begin{equation}
\begin{split}
\prescript{}{\left(000\right)}{\mathcal{E}}&=\left(0,0,0\right) \quad \text{(unstable)},\\
\prescript{}{\left(100\right)}{\mathcal{E}}&=\left(0.00183202,0,0\right) \quad \text{(stable)},\\ \prescript{}{\left(001\right)}{\mathcal{E}}&=\left(0,0,0.297369\right) \quad \text{(unstable)},\\ 
\prescript{}{\left(110\right)}{\mathcal{E}}^{\left(1\right)}&=\left(0.0528596,0.00502299,0\right) \quad \text{(unstable)},\\ \prescript{}{\left(110\right)}{\mathcal{E}}^{\left(2\right)}&=\left(0.489467,0.0485697,0\right) \quad \text{(unstable)},\\
\prescript{}{\left(111\right)}{\mathcal{E}}&=\left(0.306611,0.0520205,36.4777\right) \quad \text{(stable)}.
\end{split}
\end{equation}

\begin{figure}[H]
    \centering
    {\includegraphics[width=0.8\textwidth,height=0.6\textwidth]{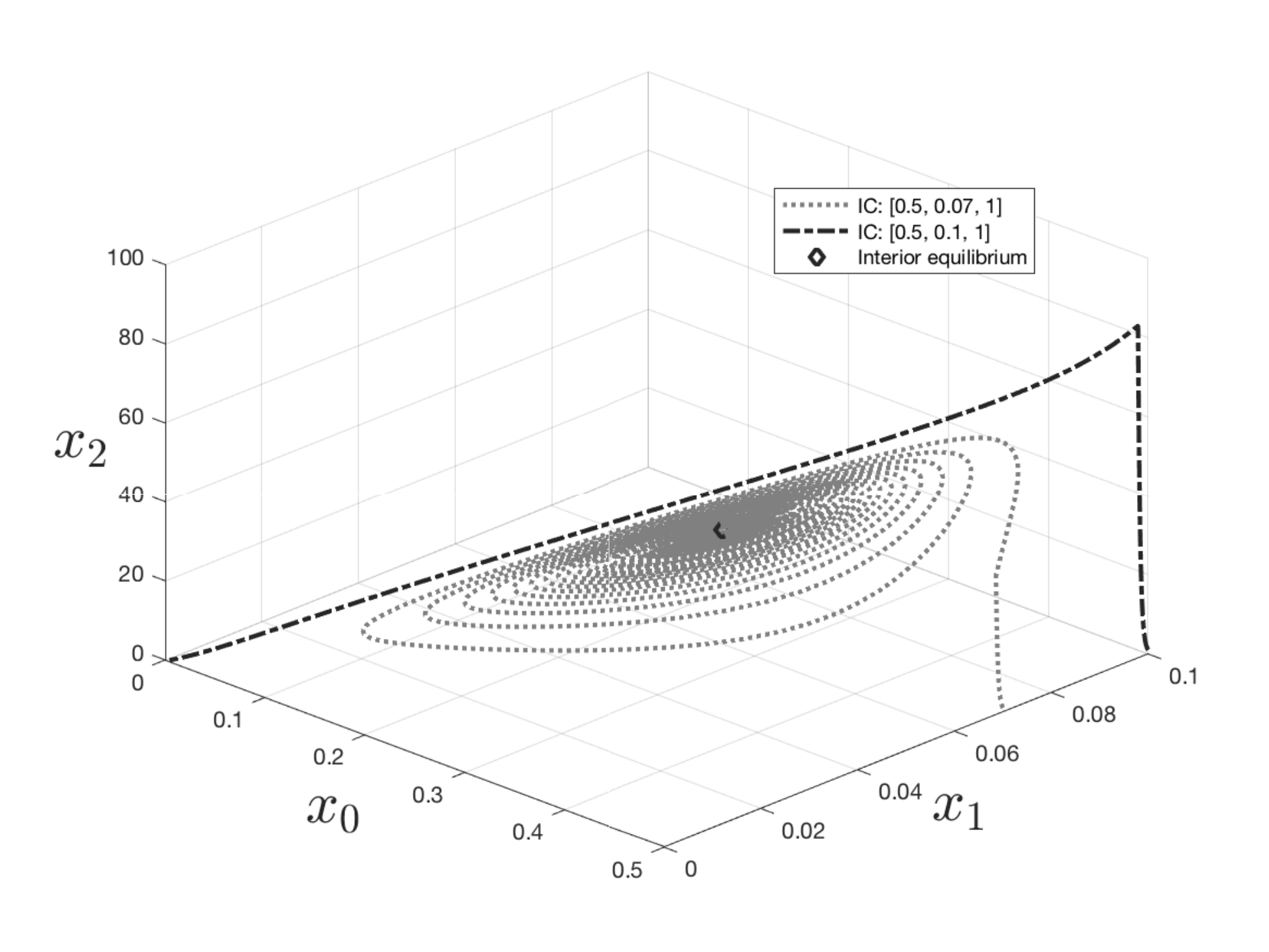}}
    \caption{Phase space of system \eqref{eq:system_reduced} with $\alpha=0.01$, $u_f=0.5$, $u_g=0.0006$, and $u_h=0.3$.}
    \label{fig:Hopf_uh_after}
\end{figure}

We can see that the Hopf bifurcation occurs between $u_h=0.05$ and $u_h=0.3$. The stable periodic orbit is no longer present, and the interior equilibrium is stable. Once again, the boundary equilibrium $\prescript{}{\left(100\right)}{\mathcal{E}}$ is stable, and thus we observe bistability in the system.

It is worth to notice, that increasing $u_h$ had a stabilizing effect on the system (actually, this type of behaviour applies also to $u_f$ and $u_g$). This result is especially important in the context of the modeled phenomenon, since the most desirable situation happens when the  production of methane is not fluctuating. Variable rates of gas production can result in decreased productivity of the biogas plant.

\subsection{Persistence}
The notion of persistence is particularly important in modeling biological phenomena. Roughly speaking, we say that a system is persistent if all the species with positive initial populations survive. The formal definition is as follows.

\begin{definition}
The system
\begin{equation}
\begin{split}
    x_i'&=x_if_i\left(x_1,x_2,\ldots,x_n\right),\\
    x_i\left(0\right) &= x_{i0}\geq 0, \quad i=1,2,\ldots,n,
\end{split}
\end{equation}
is said to be weakly persistent if
\begin{equation}
    \limsup\limits_{t\rightarrow\infty}x_i\left(t\right)>0, \quad i=1,2,\ldots,n
\end{equation}
for every trajectory with positive initial conditions, and is said to be persistent if
\begin{equation}
    \liminf\limits_{t\rightarrow\infty}x_i\left(t\right)>0, \quad i=1,2,\ldots,n
\end{equation}
for every trajectory with positive initial conditions. This system is said to be uniformly persistent if there exists a positive number $\epsilon$ such that
\begin{equation}
    \liminf\limits_{t\rightarrow\infty}x_i\left(t\right)\geq \epsilon, \quad i=1,2,\ldots,n
\end{equation}
for every trajectory with positive initial conditions.
\end{definition}
To prove that system \eqref{eq:system_reduced} is persistent, we will use the Butler-McGehee lemma \cite{smith_waltman_1995} repeatedly.
\begin{lemma}\label{lem:butler_mcgehee}
Suppose that $x^\ast$ is a hyperbolic equilibrium point of the system
\begin{equation}
\begin{split}
    x' &= f(x),\\
    x\left(0\right)&=x_0,
\end{split}
\end{equation}
with $x\in\mathbb{R}^n$ and $f:\mathbb{R}^n\to\mathbb{R}^n$, where $f$ is continuously differentiable. Suppose also that $x^\ast$ is in $\omega\left(x_0\right)$, the omega limit set of $\gamma^+\left(x_0\right)$ (the positive semi-orbit through $x_0$), but is not the entire omega limit set. Then $\omega\left(x_0\right)$ has nontrivial (i.e., different from $x^\ast$) intersection with the stable and unstable manifolds of $x^\ast$.
\end{lemma}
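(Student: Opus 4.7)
The plan is to prove the Butler-McGehee lemma by invoking the Stable Manifold Theorem at the hyperbolic equilibrium $x^\ast$ and combining it with compactness and invariance of the $\omega$-limit set. First I would apply the Stable Manifold Theorem to obtain, on a small neighborhood $U$ of $x^\ast$, local invariant manifolds $W^s_{\mathrm{loc}}(x^\ast)$ and $W^u_{\mathrm{loc}}(x^\ast)$ of complementary dimensions. After shrinking $U$, I get the standard trichotomy: any point of $U$ either lies on $W^s_{\mathrm{loc}}$ (its forward orbit stays in $U$ and converges to $x^\ast$), lies on $W^u_{\mathrm{loc}}$ (its backward orbit stays in $U$ and converges to $x^\ast$), or exits $U$ in finite time in both time directions. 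Hyperbolicity also provides a uniform expansion rate $\lambda>0$ along the unstable directions, which I will need later.

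Next, since $\omega(x_0)\neq\{x^\ast\}$, pick $y\in\omega(x_0)\setminus\{x^\ast\}$ and shrink $U$ so that $y\notin\overline{U}$. Because $x^\ast$ and $y$ are both in $\omega(x_0)$, the forward trajectory $\gamma^+(x_0)$ visits arbitrarily small neighborhoods of each infinitely often, so continuity of the flow forces it to cross $\partial U$ infinitely often. I would extract sequences $t_n^{\mathrm{in}}<t_n^{\mathrm{out}}$ with $\phi(t_n^{\mathrm{in}},x_0),\phi(t_n^{\mathrm{out}},x_0)\in\partial U$ such that the orbit remains in $\overline{U}$ throughout $[t_n^{\mathrm{in}},t_n^{\mathrm{out}}]$ and approaches $x^\ast$ arbitrarily closely inside this interval (here I invoke boundedness of the orbit, which in the paper's setting is provided by Lemma~\ref{lemma:pos_bound}). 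Compactness of $\partial U$ then lets me pass to subsequences giving
\[
z^s:=\lim_n \phi(t_n^{\mathrm{in}},x_0), \qquad z^u:=\lim_n \phi(t_n^{\mathrm{out}},x_0),
\]
both in $\omega(x_0)\cap\partial U$ and distinct from $x^\ast$.

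The crux is then to show $z^s\in W^s(x^\ast)$ and $z^u\in W^u(x^\ast)$. I would argue the sojourn times $\tau_n:=t_n^{\mathrm{out}}-t_n^{\mathrm{in}}$ tend to infinity: during $[t_n^{\mathrm{in}},t_n^{\mathrm{out}}]$ the orbit reaches within some distance $\varepsilon_n\to 0$ of $x^\ast$, and the uniform expansion rate $\lambda$ forces the time to return to $\partial U$ to be at least on the order of $|\log\varepsilon_n|/\lambda\to\infty$. Once $\tau_n\to\infty$, continuity of the flow on compact time intervals yields $\phi(\tau,z^s)\in\overline{U}$ for every $\tau\geq 0$, so by the trichotomy $z^s$ must lie on $W^s_{\mathrm{loc}}(x^\ast)\subset W^s(x^\ast)$. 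The argument for $z^u$ is symmetric, running time backwards and using $W^u_{\mathrm{loc}}$.

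The main obstacle is precisely this last step, where hyperbolicity (essentially the content of the Hartman--Grobman theorem, or a direct linearization estimate on the unstable bundle) is needed to guarantee that orbit segments spend arbitrarily long times inside $U$ as they graze $x^\ast$. Everything else is a routine exploitation of closedness, invariance, and compactness of $\omega$-limit sets.
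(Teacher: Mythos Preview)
The paper does not supply its own proof of this lemma: it is quoted as a tool from \cite{smith_waltman_1995} and used repeatedly in the persistence arguments (Theorems~\ref{thm:persistence1} and~\ref{thm:persistence2}), so there is no in-paper proof to compare against. Your sketch is essentially the standard argument one finds in that reference: localise via the Stable Manifold Theorem, use that the orbit must enter and exit a small neighbourhood $U$ of $x^\ast$ infinitely often, extract accumulation points $z^s,z^u\in\partial U\cap\omega(x_0)$, show the sojourn times diverge, and conclude $z^s\in W^s_{\mathrm{loc}}(x^\ast)$, $z^u\in W^u_{\mathrm{loc}}(x^\ast)$ from the trichotomy.

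Two small remarks. First, your appeal to Lemma~\ref{lemma:pos_bound} for boundedness is unnecessary here: the lemma as stated makes no boundedness assumption on $\gamma^+(x_0)$, and your argument does not need one either, since the compactness you use is that of $\partial U$, and the infinitely many crossings of $\partial U$ follow directly from $x^\ast\in\omega(x_0)$ and $y\in\omega(x_0)\setminus\overline{U}$. Second, the divergence $\tau_n\to\infty$ can be obtained more cheaply than via an expansion-rate estimate: if $\tau_n$ were bounded, a subsequence argument and continuity of the flow would force $\phi(\sigma,z^s)=x^\ast$ for some finite $\sigma$, hence $z^s=x^\ast$, contradicting $z^s\in\partial U$. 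Either route is fine; your version simply invokes slightly more machinery than is strictly required.
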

As we already noticed in section \ref{Hopf_bifurcation}, there are values of the parameters where one of the boundary equilibria and the interior equilibrium are both asymptotically stable, and hence system \eqref{eq:system_reduced} is not persistent, even though  an interior equilibrium point exists. We will thus focus on the cases for which no boundary equilibrium point of system \eqref{eq:system_reduced} is stable.

\begin{theorem}\label{thm:persistence1}
Let system \eqref{eq:system_reduced} have the following equilibria configuration (as represented schematically in Figure \ref{fig:persist_schem1}):
\begin{center}
    \begin{tabular}{C{2cm} C{6cm} C{6cm}}
        \hline
        Equilibrium  & Number of eigenvalues with positive real part & Number of eigenvalues with negative real part \\
        \hline
        $\prescript{}{\left(000\right)}{\mathcal{E}}$ & 2 & 1 \\
        $\prescript{}{\left(100\right)}{\mathcal{E}}$ & 1 & 2 \\
        $\prescript{}{\left(001\right)}{\mathcal{E}}$ & 1 & 2 \\
        $\prescript{}{\left(110\right)}{\mathcal{E}}$ & 1 & 2 \\
        \hline
    \end{tabular}
\end{center}
Then system \eqref{eq:system_reduced} is persistent.

\end{theorem}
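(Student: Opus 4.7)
The plan is to argue by contradiction using the Butler-McGehee Lemma~\ref{lem:butler_mcgehee}. Suppose the system is not persistent: then there exists a trajectory $\varphi$ with strictly positive initial conditions whose omega-limit set $\omega\left(\varphi\right)$ intersects the boundary $\partial \Omega$. By Lemma~\ref{lemma:pos_bound} the solution is bounded, so $\omega\left(\varphi\right)$ is nonempty, compact, connected, and invariant. Since each coordinate face of $\Omega$ is $2$-dimensional and forward invariant and, by subsection~\ref{subsec:faces}, carries no periodic orbits, the Poincar\'e-Bendixson theorem forces every compact invariant subset of a face to contain an equilibrium. Hence $\omega\left(\varphi\right)$ contains at least one of the four listed boundary equilibria.

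Next I would extract the local eigenvector structure at each of those equilibria. The Jacobians at $\prescript{}{\left(000\right)}{\mathcal{E}}$, $\prescript{}{\left(100\right)}{\mathcal{E}}$, and $\prescript{}{\left(001\right)}{\mathcal{E}}$ are block-triangular, and a direct computation shows that their eigenvectors are tangent to the invariant coordinate faces containing them, so both $W^{s}$ and $W^{u}$ at each of these three equilibria lie entirely in $\partial\Omega$. At $\prescript{}{\left(110\right)}{\mathcal{E}}$ the Jacobian has a $2\times 2$ upper-left block whose eigenvectors span the $x_{0}x_{1}$-plane, together with the extra eigenvalue $\mu_{2}-\alpha$ whose eigenvector has a nonzero $x_{2}$ component; in the regime of the theorem, where $\mu_{2}-\alpha$ is the unique positive eigenvalue and the $2\times 2$ block carries two negative ones, $W^{u}\left(\prescript{}{\left(110\right)}{\mathcal{E}}\right)$ is transverse to the $x_{0}x_{1}$-face and enters the open interior of $\Omega$, while $W^{s}\left(\prescript{}{\left(110\right)}{\mathcal{E}}\right)$ lies in that face. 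Consequently every stable manifold of a listed equilibrium is contained in $\partial\Omega$, no interior orbit converges to any of them, and in particular $\omega\left(\varphi\right)\ne\left\{\mathcal{E}^{\ast}\right\}$ for any $\mathcal{E}^{\ast}\in\omega\left(\varphi\right)\cap\partial\Omega$ --- exactly the nontrivial hypothesis for Butler-McGehee.

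I would then iterate the Butler-McGehee Lemma. For each such $\mathcal{E}^{\ast}$ with $\mathcal{E}^{\ast}\ne\prescript{}{\left(110\right)}{\mathcal{E}}$, the lemma produces a point of $\omega\left(\varphi\right)$ on $W^{u}\left(\mathcal{E}^{\ast}\right)\setminus\left\{\mathcal{E}^{\ast}\right\}$; that point lies on a boundary face, and its forward orbit --- still in $\omega\left(\varphi\right)$ --- must, by Poincar\'e-Bendixson and the absence of periodic orbits, converge to another boundary equilibrium in $\omega\left(\varphi\right)$. This produces a heteroclinic chain inside $\omega\left(\varphi\right)\cap\partial\Omega$. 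A refinement of the Dulac argument of subsection~\ref{subsec:faces} applied to the region enclosed by any hypothetical heteroclinic loop on a face (the Dulac integrand $\nabla\cdot\left(\varphi_{i}f\right)<0$ forces a strictly negative area integral, while the boundary flux vanishes along any flow segment) rules out cycles, so the chain is acyclic and must terminate at $\prescript{}{\left(110\right)}{\mathcal{E}}$, whose unstable manifold then forces $\omega\left(\varphi\right)$ to meet the interior. Packaged cleanly, this is exactly the hypothesis of the acyclic isolated-covering theorem for uniform persistence (Hale-Waltman/Smith-Thieme, cf.\ \cite{smith_waltman_1995}): the four equilibria form an acyclic isolated covering of $\bigcup_{x\in\partial\Omega}\omega\left(x\right)$ whose stable manifolds miss the interior, so uniform persistence follows, contradicting the standing assumption.

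The main obstacle will be the acyclicity verification: the direction of each $1$-dimensional unstable manifold at $\prescript{}{\left(100\right)}{\mathcal{E}}$ and $\prescript{}{\left(001\right)}{\mathcal{E}}$ depends on which of the two nontrivial eigenvalues is the positive one, so an exhaustive case analysis (using the nonexistence of $\prescript{}{\left(010\right)}{\mathcal{E}}$, $\prescript{}{\left(101\right)}{\mathcal{E}}$, $\prescript{}{\left(011\right)}{\mathcal{E}}$ under the theorem's hypotheses, together with the Dulac criterion extended carefully across the axis equilibria where the Dulac function blows up) is required to enumerate the possible chains and confirm acyclicity in every configuration consistent with the stated eigenvalue data.
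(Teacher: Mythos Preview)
Your approach is broadly correct but takes a genuinely different route from the paper. The paper proceeds by a direct, sequential application of the Butler--McGehee Lemma using \emph{stable} manifolds: for each boundary equilibrium $\mathcal{E}^\ast$ in turn ($\prescript{}{\left(000\right)}{\mathcal{E}}$, then $\prescript{}{\left(001\right)}{\mathcal{E}}$, then $\prescript{}{\left(100\right)}{\mathcal{E}}$, then $\prescript{}{\left(110\right)}{\mathcal{E}}$), it picks a point in $\omega\cap W^{s}(\mathcal{E}^\ast)\setminus\{\mathcal{E}^\ast\}$, follows its orbit \emph{backward} inside the relevant face, and derives a contradiction either from unboundedness or from landing on an equilibrium already excluded. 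No acyclicity theorem is invoked for persistence itself; the paper only appeals to the Butler--Freedman--Waltman acyclic covering machinery afterwards, in a separate remark, to upgrade persistence to uniform persistence. Your route, by contrast, packages everything through the acyclic isolated-covering theorem from the outset, which is legitimate and in fact yields the stronger conclusion in one stroke, at the cost of importing a heavier result and having to verify acyclicity carefully.

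One genuine weak point in your write-up: the intermediate ``chain'' argument does not close on its own. You build a forward heteroclinic chain via unstable manifolds, argue it must terminate at $\prescript{}{\left(110\right)}{\mathcal{E}}$, and then say its unstable manifold ``forces $\omega(\varphi)$ to meet the interior.'' But that is not a contradiction --- you only assumed $\omega(\varphi)$ meets $\partial\Omega$, not that it is contained there. You patch this by immediately invoking the acyclicity theorem, which is fine, but the chain paragraph should either be dropped or reframed purely as verifying the theorem's hypotheses (isolated covering, $W^{s}\cap\operatorname{int}\Omega=\emptyset$, acyclicity). Note also that the ``exhaustive case analysis'' you flag as the main obstacle is largely avoided in the paper's stable-manifold argument: because the paper works with the specific configuration of Figure~\ref{fig:persist_schem1}, the location of each $W^{s}$ is fixed, and the backward-unboundedness argument does not require tracking which of the two transverse eigenvalues at $\prescript{}{\left(100\right)}{\mathcal{E}}$ or $\prescript{}{\left(001\right)}{\mathcal{E}}$ is the unstable one.
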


\begin{figure}[H]
    \centering
    {\includegraphics[width=0.9\textwidth,height=0.7\textwidth]{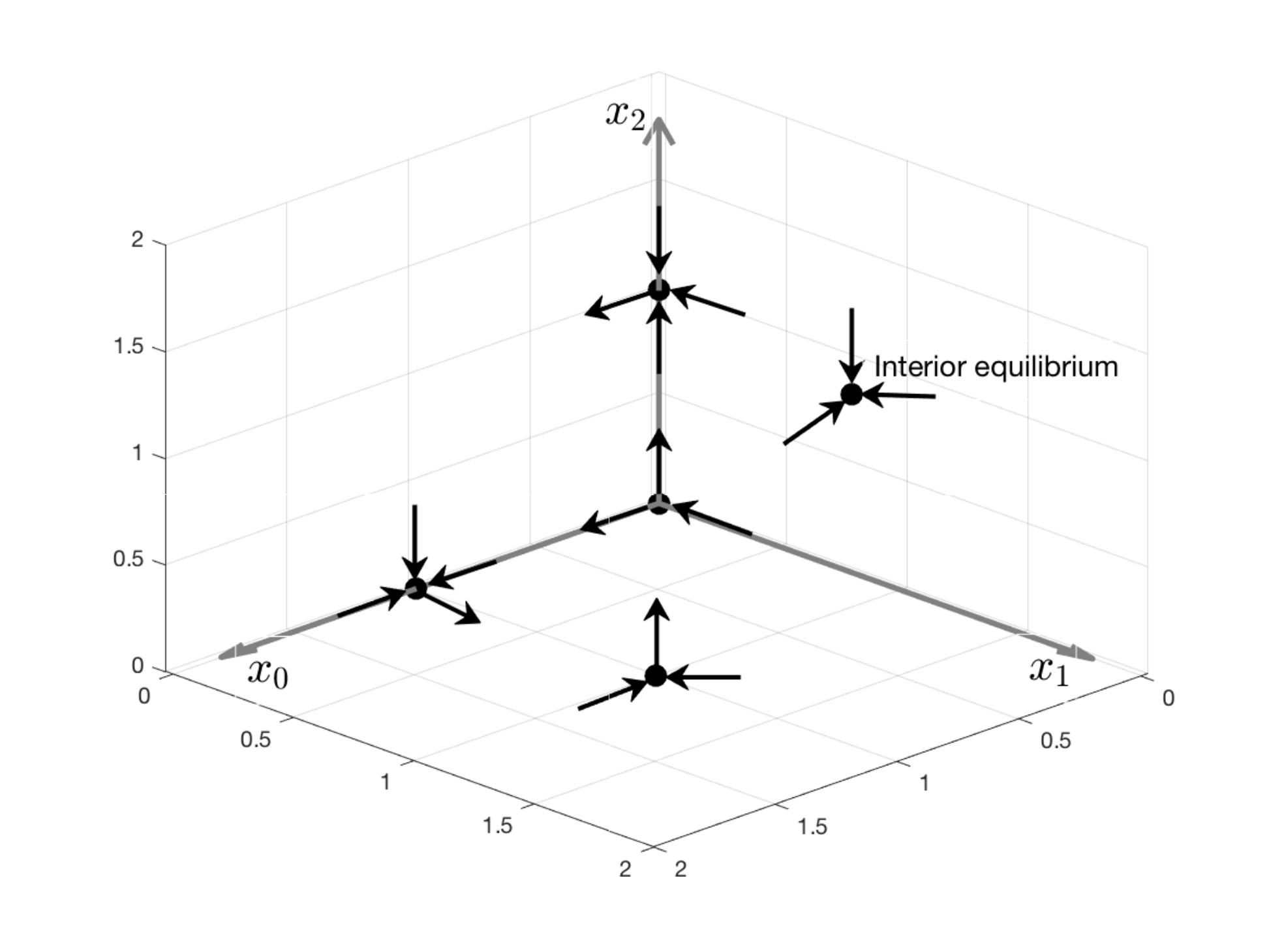}}
    \caption{Schematic representation of the equilibria configuration occurring in the hypothesis of theorem \ref{thm:persistence1}. Black arrows represent stable and unstable manifolds of each of the equilibrium (marked by the orange dots). In the example for the parameters we select in model \eqref{eq:system_reduced} to illustrate this theorem (see \eqref{eq:perst_par1}), there is an asymptotically stable interior equilibrium (as shown). However, this is not necessary for the proof of theorem \ref{thm:persistence1}.}
    \label{fig:persist_schem1}
\end{figure}

\begin{proof}
Since planes $x_0x_1$, $x_0x_2$ and $x_1x_2$ are invariant, we know where the stable and unstable manifolds of the boundary equilibria lie. This is represented in a schematic way in Figure \ref{fig:persist_schem1}. Keeping this picture in mind should make the following argument much more transparent. Assume that a solution $\Vec{x}\left(t\right) = \left(x_0\left(t\right),x_1\left(t\right),x_2\left(t\right)\right)$ with an initial condition $\Vec{x}^{\left(0\right)}= \left(x_0^{\left(0\right)},x_1^{\left(0\right)},x_2^{\left(0\right)}\right)$, where $x_i^{\left(0\right)}>0$, $i=1,2,3$, is given. First, suppose that $\prescript{}{\left(000\right)}{\mathcal{E}}$ belongs to $\omega\left(\gamma^+\left(\Vec{x}^{\left(0\right)}\right)\right)$, the omega limit set of $\gamma^+ \left(\Vec{x}^{\left(0\right)}\right)$. Since $\prescript{}{\left(000\right)}{\mathcal{E}}$ is a saddle point with one-dimensional stable manifold restricted to the $x_1$-axis, it is not the entire omega limit set $\omega\left(\gamma^+\left(\Vec{x}^{\left(0\right)}\right)\right)$. Hence, by lemma \ref{lem:butler_mcgehee}, there is a point $x^\ast\neq\prescript{}{\left(000\right)}{\mathcal{E}}$ in both $\omega\left(\gamma^+\left(\Vec{x}^{\left(0\right)}\right)\right)$ and $W^s\left(\prescript{}{\left(000\right)}{\mathcal{E}}\right)$, the stable manifold of $\prescript{}{\left(000\right)}{\mathcal{E}}$. The entire orbit through any point in an omega limit set is also in the omega limit set. The stable manifold of $\prescript{}{\left(000\right)}{\mathcal{E}}$ is the $x_1$-axis, and the $x_1$-axis is unbounded. We have already proven in section \ref{sec:reduction} that all orbits of system \eqref{eq:system_reduced} are bounded, and hence the omega limit set of any orbit of \eqref{eq:system_reduced} is bounded. This contradicts the existence of such an $x^\ast$ and thus $\prescript{}{\left(000\right)}{\mathcal{E}}\notin\omega\left(\gamma^+\left(\Vec{x}^{\left(0\right)}\right)\right)$.

Now, suppose that $\prescript{}{\left(001\right)}{\mathcal{E}}\in\omega\left(\gamma^+\left(\Vec{x}^{\left(0\right)}\right)\right)$. Since $\prescript{}{\left(001\right)}{\mathcal{E}}$ is a saddle point with two-dimensional stable manifold restricted to the $x_0x_1$-plane, $\left\{\prescript{}{\left(001\right)}{\mathcal{E}}\right\}$ is not the entire omega limit set $\omega\left(\gamma^+\left(\Vec{x}^{\left(0\right)}\right)\right)$. Thus, using lemma \ref{lem:butler_mcgehee}, there is a point $x^\ast\in \omega\left(\gamma^+\left(\Vec{x}^{\left(0\right)}\right)\right) \cap W^s\left(\prescript{}{\left(001\right)}{\mathcal{E}}\right)\setminus\left\{\prescript{}{\left(001\right)}{\mathcal{E}}\right\}$. Since the stable manifold $W^s\left(\prescript{}{\left(001\right)}{\mathcal{E}}\right)$ lies entirely in the $x_1x_2$-plane, and the entire orbit through $x^\ast$ is in $\omega\left(\gamma^+\left(\Vec{x}^{\left(0\right)}\right)\right)$, by the analysis in subsection \ref{subsec:faces}, this orbit becomes unbounded in backward time. This contradiction shows that $\prescript{}{\left(001\right)}{\mathcal{E}}\notin\omega\left(\gamma^+\left(\Vec{x}^{\left(0\right)}\right)\right)$.

Now, suppose that $\prescript{}{\left(100\right)}{\mathcal{E}}\in\omega\left(\gamma^+\left(\Vec{x}^{\left(0\right)}\right)\right)$. Similarly as in the previous cases, this implies that there exists a point $x^\ast\in \omega\left(\gamma^+\left(\Vec{x}^{\left(0\right)}\right)\right) \cap W^s\left(\prescript{}{\left(100\right)}{\mathcal{E}}\right)\setminus\left\{\prescript{}{\left(100\right)}{\mathcal{E}}\right\}$. This time the stable manifold $W^s\left(\prescript{}{\left(100\right)}{\mathcal{E}}\right)$ is two-dimensional and lies entirely in the $x_0x_2$-plane. By the analysis in subsection \ref{subsec:faces}, the entire orbit through $x^\ast$ \Big(which belongs to $\omega\left(\gamma^+\left(\Vec{x}^{\left(0\right)}\right)\right)$\Big) becomes unbounded in backward time or its closure contains $\prescript{}{\left(001\right)}{\mathcal{E}}$. This contradiction proves that  $\prescript{}{\left(100\right)}{\mathcal{E}}\notin\omega\left(\gamma^+\left(\Vec{x}^{\left(0\right)}\right)\right)$.

Now, suppose that $\prescript{}{\left(110\right)}{\mathcal{E}}\in\omega\left(\gamma^+\left(\Vec{x}^{\left(0\right)}\right)\right)$. Again, $\left\{\prescript{}{\left(110\right)}{\mathcal{E}}\right\}$ is not the entire omega limit set $\omega\left(\gamma^+\left(\Vec{x}^{\left(0\right)}\right)\right)$, so there exists a point $x^\ast\in\omega\left(\gamma^+\left(\Vec{x}^{\left(0\right)}\right)\right)\cap W^s\left(\prescript{}{\left(110\right)}{\mathcal{E}}\right)\setminus\left\{\prescript{}{\left(110\right)}{\mathcal{E}}\right\}$. This point lies in the $x_0x_1$-plane, since $W^s\left(\prescript{}{\left(110\right)}{\mathcal{E}}\right)$ is two-dimensional and is entirely contained in this plane. As in the previous cases, the entire orbit through $x^\ast$ is in $\omega\left(\gamma^+\left(\Vec{x}^{\left(0\right)}\right)\right)$. Since there are no periodic orbits in the $x_0x_1$ face, and since $\left\{\prescript{}{\left(100\right)}{\mathcal{E}}\right\}\notin \omega\left(\gamma^+\left(\Vec{x}^{\left(0\right)}\right)\right)$, the orbit becomes unbounded in backward time. This contradiction proves that $\prescript{}{\left(110\right)}{\mathcal{E}}\notin\omega\left(\gamma^+\left(\Vec{x}^{\left(0\right)}\right)\right)$. 

Finally, consider any $\widehat{x}=\left(\widehat{x}_0,\widehat{x}_1,\widehat{x}_2\right)$, such that $\widehat{x}_i=0$ for at least one $i=1,2,3$, and suppose that $\widehat{x}\in\omega\left(\gamma^+\left(\Vec{x}^{\left(0\right)}\right)\right)$. Then, the entire orbit through $\widehat{x}$ is in $\omega\left(\gamma^+\left(\Vec{x}^{\left(0\right)}\right)\right)$. But since this orbit lies entirely in either $x_0x_1$, $x_1x_2$, or $x_0x_2$ face, it converges to one of the boundary equilibria. This implies that this boundary equilibrium is in $\omega\left(\gamma^+\left(\Vec{x}^{\left(0\right)}\right)\right)$, and this possibility has been eliminated in the previous part of the proof.

We have therefore proven that
\[
    \liminf\limits_{t\rightarrow\infty}x_i\left(t\right)>0, \quad i=1,2,3,
\]
i.e., that system \eqref{eq:system_reduced} is persistent. 
\end{proof}

An example satisfying the assumptions of theorem \ref{thm:persistence1} occurs for

\begin{equation}\label{eq:perst_par1}
    \alpha = 0.0002, \quad u_f=0.6, \quad u_g=0, \quad u_h=0.1.
\end{equation}

Persistence can also be observed with the addition of phenol, i.e., with $u_g>0$.

\begin{theorem}\label{thm:persistence2}
Let system \eqref{eq:system_reduced} have the following equilibria configuration (as represented schematically in Figure \ref{fig:persist_schem2}):

\begin{center}
    \begin{tabular}{C{2cm} C{6cm} C{6cm}}
        \hline
        Equilibrium  & Number of eigenvalues with positive real part & Number of eigenvalues with negative real part \\
        \hline
        $\prescript{}{\left(000\right)}{\mathcal{E}}$ & 2 & 1 \\
        $\prescript{}{\left(100\right)}{\mathcal{E}}$ & 1 & 2 \\
        $\prescript{}{\left(001\right)}{\mathcal{E}}$ & 1 & 2 \\
        $\prescript{}{\left(011\right)}{\mathcal{E}}$ & 1 & 2
        \\
        $\prescript{}{\left(110\right)}{\mathcal{E}}$ & 1 & 2 \\
        \hline
    \end{tabular}
\end{center}

Then system \eqref{eq:system_reduced} is persistent.
\end{theorem}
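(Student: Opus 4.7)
The plan is to adapt the proof of Theorem \ref{thm:persistence1} by applying the Butler-McGehee Lemma \ref{lem:butler_mcgehee} in sequence to exclude every boundary equilibrium from $\omega\bigl(\gamma^+\bigl(\vec{x}^{(0)}\bigr)\bigr)$ for any orbit with $\vec{x}^{(0)}$ strictly positive. The only structural novelty is the presence of $\prescript{}{\left(011\right)}{\mathcal{E}}$ in the invariant $x_1x_2$-face, together with the absence of $\prescript{}{\left(010\right)}{\mathcal{E}}$ (not listed in the hypothesis table, hence non-existent under these parameters); both changes affect only the face-flow analysis in the $x_1x_2$-plane.

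The eliminations of $\prescript{}{\left(000\right)}{\mathcal{E}}$, $\prescript{}{\left(100\right)}{\mathcal{E}}$, and $\prescript{}{\left(110\right)}{\mathcal{E}}$ go through essentially \emph{verbatim} from Theorem \ref{thm:persistence1}: the Jacobian signatures are identical, their stable manifolds sit in the same axis and invariant faces, and the Dulac estimates of Subsection \ref{subsec:faces} still forbid periodic orbits in every coordinate face. For $\prescript{}{\left(001\right)}{\mathcal{E}}$ a small adjustment is required, since the existence of $\prescript{}{\left(011\right)}{\mathcal{E}}$ forces $\mu_1(u_g,\mu_2^{-1}(\alpha))>\alpha$: the positive eigenvalue at $\prescript{}{\left(001\right)}{\mathcal{E}}$ is now $\mu_1-\alpha$ rather than $\mu_0-\alpha$, so $W^s(\prescript{}{\left(001\right)}{\mathcal{E}})$ lies inside the $x_0x_2$-face rather than the $x_1x_2$-face. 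In that face $\prescript{}{\left(000\right)}{\mathcal{E}}$ is the unique source (both $\mu_0(u_f,u_h)-\alpha$ and $\mu_2(u_h)-\alpha$ are positive there), so the Butler-McGehee argument supplies a backward orbit whose Poincar\'e-Bendixson alpha-limit must contain $\prescript{}{\left(000\right)}{\mathcal{E}}$, producing the same contradiction as in Theorem \ref{thm:persistence1}. The elimination of $\prescript{}{\left(100\right)}{\mathcal{E}}$ proceeds analogously in the same face.

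The genuinely new case is $\prescript{}{\left(011\right)}{\mathcal{E}}$. A direct calculation shows that the $2\times2$ Jacobian of the face flow at this equilibrium has trace $\prescript{}{\left(011\right)}{x_1}\bigl(-\partial_{s_1}\mu_1+\omega_1\partial_{s_2}\mu_1\bigr)-\prescript{}{\left(011\right)}{x_2}\mu_2'<0$ and determinant $\prescript{}{\left(011\right)}{x_1}\prescript{}{\left(011\right)}{x_2}\mu_2'\,\partial_{s_1}\mu_1>0$, so $\prescript{}{\left(011\right)}{\mathcal{E}}$ is asymptotically stable \emph{within} the 2D $x_1x_2$-face and $W^s(\prescript{}{\left(011\right)}{\mathcal{E}})$ contains a 2D open subset of the face. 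Assume $\prescript{}{\left(011\right)}{\mathcal{E}}\in\omega$; Butler-McGehee yields $x^{\ast}\in\omega\cap W^s(\prescript{}{\left(011\right)}{\mathcal{E}})\setminus\{\prescript{}{\left(011\right)}{\mathcal{E}}\}$ whose entire orbit lies in the face and in the bounded set $\omega$. By Subsection \ref{subsec:faces} no periodic orbit exists in this face, so Poincar\'e-Bendixson forces the alpha-limit to be a connected union of fixed points and heteroclinic orbits. Because $\prescript{}{\left(011\right)}{\mathcal{E}}$ is a face-attractor it cannot belong to any backward-limit set, and $\prescript{}{\left(010\right)}{\mathcal{E}}$ does not exist, so the alpha-limit must contain $\prescript{}{\left(000\right)}{\mathcal{E}}$ or $\prescript{}{\left(001\right)}{\mathcal{E}}$, each already removed from $\omega$ in an earlier step.

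The correct order of elimination will therefore be $\prescript{}{\left(000\right)}{\mathcal{E}}$, then $\prescript{}{\left(001\right)}{\mathcal{E}}$ and $\prescript{}{\left(100\right)}{\mathcal{E}}$, then $\prescript{}{\left(011\right)}{\mathcal{E}}$ and $\prescript{}{\left(110\right)}{\mathcal{E}}$; a concluding sweep identical to the last paragraph of the proof of Theorem \ref{thm:persistence1} will handle any generic boundary point $\widehat{x}$ on a coordinate face. I expect the principal obstacle to be the bookkeeping of this elimination chain—verifying at each step that the Poincar\'e-Bendixson alpha-limit produced in the relevant invariant face genuinely involves only previously-eliminated equilibria—together with the face-stability computation for $\prescript{}{\left(011\right)}{\mathcal{E}}$, which is the key qualitative new input.
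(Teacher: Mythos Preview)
Your overall strategy is the paper's: apply the Butler--McGehee lemma in sequence to each boundary equilibrium, using the Dulac results of Subsection~\ref{subsec:faces} to control the backward face dynamics, and finish with the same generic-boundary-point sweep. The order of elimination and the treatment of $\prescript{}{\left(011\right)}{\mathcal{E}}$ also match.

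There is, however, a genuine slip in your analysis of $\prescript{}{\left(001\right)}{\mathcal{E}}$. You argue that the existence of $\prescript{}{\left(011\right)}{\mathcal{E}}$ forces $\mu_1(u_g,\mu_2^{-1}(\alpha))>\alpha$ and conclude that the single positive eigenvalue at $\prescript{}{\left(001\right)}{\mathcal{E}}$ is $\mu_1-\alpha$ \emph{rather than} $\mu_0-\alpha$, placing the two-dimensional stable manifold in the $x_0x_2$-face. But the hypothesis that $\prescript{}{\left(011\right)}{\mathcal{E}}$ carries exactly one positive eigenvalue means its transverse eigenvalue $\lambda_1=\mu_0\bigl(u_f,\mu_2^{-1}(\alpha)\bigr)-\alpha$ is positive; this is the \emph{same} quantity as the $\mu_0$-eigenvalue of $\prescript{}{\left(001\right)}{\mathcal{E}}$. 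Hence both $\mu_0-\alpha$ and $\mu_1-\alpha$ are positive at $\prescript{}{\left(001\right)}{\mathcal{E}}$, the only stable direction is $-\prescript{}{\left(001\right)}{x_2}\,\mu_2'$, and $W^s\bigl(\prescript{}{\left(001\right)}{\mathcal{E}}\bigr)$ is one-dimensional, lying on the $x_2$-axis. (The entry ``$1$ positive, $2$ negative'' for $\prescript{}{\left(001\right)}{\mathcal{E}}$ in the hypothesis table is inconsistent with the rest of the configuration; the paper's own proof and schematic figure take $W^s\bigl(\prescript{}{\left(001\right)}{\mathcal{E}}\bigr)$ to be the $x_2$-axis.) Your reading would in fact force $\prescript{}{\left(011\right)}{\mathcal{E}}$ to be asymptotically stable, contradicting the very table you are working from.

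Fortunately this does not wreck the elimination: the $x_2$-axis is contained in the $x_0x_2$-face, so the Butler--McGehee point $x^\ast$ you produce still lies where you want it. The argument then becomes \emph{simpler} than your Poincar\'e--Bendixson step---on the $x_2$-axis the backward orbit through $x^\ast$ either becomes unbounded or limits on $\prescript{}{\left(000\right)}{\mathcal{E}}$, which has already been excluded. This is exactly how the paper handles it.
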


\begin{figure}[H]
    \centering
    {\includegraphics[width=0.9\textwidth,height=0.7\textwidth]{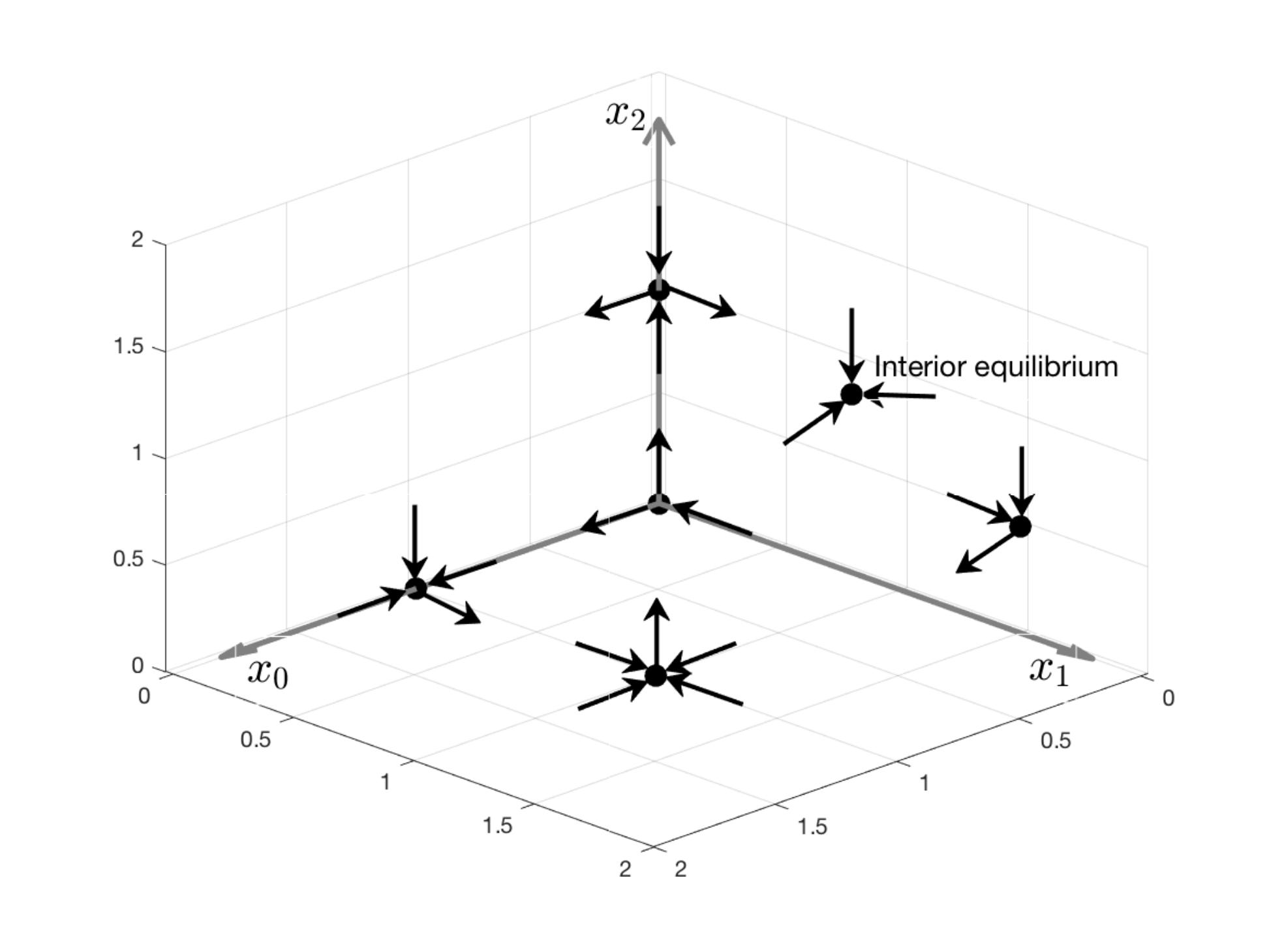}}
    \caption{Schematic representation of the equilibria configuration occurring in the hypothesis of theorem \ref{thm:persistence2}. Black arrows represent stable and unstable manifolds of each of the equilibrium (marked by the orange dots). In the example for the parameters we select in model \eqref{eq:system_reduced} to illustrate this theorem (see \eqref{eq:perst_par2}), there is an asymptotically stable interior equilibrium (as shown). However, this is not necessary for the proof of theorem \ref{thm:persistence2}.}
    \label{fig:persist_schem2}
\end{figure}

\begin{proof}
The idea behind this proof is very similar to the method presented in the proof of theorem \ref{thm:persistence1}. Let $\Vec{x}\left(t\right) = \left(x_0\left(t\right),x_1\left(t\right),x_2\left(t\right)\right)$ be a solution of \eqref{eq:system_reduced} with an initial condition $\Vec{x}^{\left(0\right)}= \left(x_0^{\left(0\right)},x_1^{\left(0\right)},x_2^{\left(0\right)}\right)$, where $x_i^{\left(0\right)}>0$, $i=1,2,3$. Since the stable and unstable manifolds of the all of the equilibria, except $\prescript{}{\left(001\right)}{\mathcal{E}}$ and $\prescript{}{\left(011\right)}{\mathcal{E}}$, have the same configuration as in the hypothesis of theorem \ref{thm:persistence1}, the argument eliminating them from the omega limit set of $\gamma^+\left(\Vec{x}^{\left(0\right)}\right)$ is exactly the same and we only need to focus on $\prescript{}{\left(001\right)}{\mathcal{E}}$ and $\prescript{}{\left(011\right)}{\mathcal{E}}$ equilibria.

Suppose that $\prescript{}{\left(001\right)}{\mathcal{E}}\in\omega\left(\gamma^+\left(\Vec{x}^{\left(0\right)}\right)\right)$. Since $\prescript{}{\left(001\right)}{\mathcal{E}}$ is a saddle point with one-dimensional stable manifold restricted to the $x_2$-axis, we have $\omega\left(\gamma^+\left(\Vec{x}^{\left(0\right)}\right)\right)\setminus\left\{\prescript{}{\left(001\right)}{\mathcal{E}}\right\}\neq\emptyset$. Hence, by lemma \ref{lem:butler_mcgehee}, there is a point $x^\ast\in\omega\left(\gamma^+\left(\Vec{x}^{\left(0\right)}\right)\right)\cap W^s\left(\prescript{}{\left(001\right)}{\mathcal{E}}\right)\setminus\left\{\prescript{}{\left(001\right)}{\mathcal{E}}\right\}$. The entire orbit through $x^\ast$, which also belongs to $\omega\left(\gamma^+\left(\Vec{x}^{\left(0\right)}\right)\right)$, either becomes unbounded in backward time, or converges to the $\prescript{}{\left(000\right)}{\mathcal{E}}$ equilibrium. Since all orbits of system \eqref{eq:system_reduced} are bounded, and $\prescript{}{\left(000\right)}{\mathcal{E}}\notin\omega\left(\gamma^+\left(\Vec{x}^{\left(0\right)}\right)\right)$, we obtain a contradiction. Hence $\prescript{}{\left(001\right)}{\mathcal{E}}\notin\omega\left(\gamma^+\left(\Vec{x}^{\left(0\right)}\right)\right)$.

Now, suppose that $\prescript{}{\left(011\right)}{\mathcal{E}}\in\omega\left(\gamma^+\left(\Vec{x}^{\left(0\right)}\right)\right)$. Since $\prescript{}{\left(011\right)}{\mathcal{E}}$ is a saddle point with two-dimensional stable manifold restricted to the $x_1x_2$-plane (it is repelling into the interior), we have $\omega\left(\gamma^+\left(\Vec{x}^{\left(0\right)}\right)\right)\setminus\left\{\prescript{}{\left(011\right)}{\mathcal{E}}\right\}\neq\emptyset$. By using lemma \ref{lem:butler_mcgehee}, there exists a point $x^\ast\in\omega\left(\gamma^+\left(\Vec{x}^{\left(0\right)}\right)\right)\cap W^s\left(\prescript{}{\left(011\right)}{\mathcal{E}}\right) \setminus\left\{\prescript{}{\left(011\right)}{\mathcal{E}}\right\}$. The entire orbit through $x^\ast$, which also belongs to $\omega\left(\gamma^+\left(\Vec{x}^{\left(0\right)}\right)\right)$, either becomes unbounded in backward time, or converges to $\prescript{}{\left(000\right)}{\mathcal{E}}$, or $\prescript{}{\left(001\right)}{\mathcal{E}}$ (we have previously shown in subsection \ref{subsec:faces} that there are no periodic orbits in the $x_1x_2$ face). Since we have already proven that $\prescript{}{\left(000\right)}{\mathcal{E}}\notin\omega\left(\gamma^+\left(\Vec{x}^{\left(0\right)}\right)\right)$, and $\prescript{}{\left(001\right)}{\mathcal{E}}\notin\omega\left(\gamma^+\left(\Vec{x}^{\left(0\right)}\right)\right)$, we obtain a contradiction, which proves that $\prescript{}{\left(011\right)}{\mathcal{E}}\notin\omega\left(\gamma^+\left(\Vec{x}^{\left(0\right)}\right)\right)$.

Finally, consider any $\widehat{x}=\left(\widehat{x}_0,\widehat{x}_1,\widehat{x}_2\right)$, such that $\widehat{x}_i=0$ for at least one $i=1,2,3$, and suppose that $\widehat{x}\in\omega\left(\gamma^+\left(\Vec{x}^{\left(0\right)}\right)\right)$. Then, the entire orbit through $\widehat{x}$ is in $\omega\left(\gamma^+\left(\Vec{x}^{\left(0\right)}\right)\right)$. But since this orbit lies entirely in either $x_0x_1$, $x_1x_2$, or $x_0x_2$ face, it converges to one of the boundary equilibria. This implies that this boundary equilibrium is in $\omega\left(\gamma^+\left(\Vec{x}^{\left(0\right)}\right)\right)$, and this possibility has been eliminated in the previous part of the proof.
\end{proof}

An example satisfying the assumptions of theorem \ref{thm:persistence2} occurs for
\begin{equation}\label{eq:perst_par2}
    \alpha = 0.0002, \quad u_f=0.6, \quad u_g=0.00015, \quad u_h=0.1.
\end{equation}

\begin{remark}
Interestingly enough, in many cases of models describing biological phenomena, persistence already implies uniform persistence. The rigorous results were obtained in \cite{butler_1986}. In our context, the key theorem from \cite{butler_1986} states that if $\mathcal{F}$ is a dynamical system for which $\mathbb{R}^n_+$ and $\partial \mathbb{R}^n_+$ are invariant, then $\mathcal{F}$ is uniformly persistent provided that

\begin{enumerate}
    \item $\mathcal{F}$ is dissipative (meaning that $\forall x\in \mathbb{R}^n_+$ $\omega\left(x\right)\neq\emptyset$ and $\bigcup_{x\in\mathbb{R}^n_+}\omega\left(x\right)$ has compact closure),
    \item $\mathcal{F}$ is weakly persistent,
    \item $\partial\mathcal{F}$ (the restriction of $\mathcal{F}$ to the boundary $\partial \mathbb{R}^n_+$) is "isolated",
    \item $\partial\mathcal{F}$ is "acyclic".
\end{enumerate}
These results can be easily modified so that we consider the flow $\mathcal{F}$ on $\Omega$ defined in \eqref{eq:inv_set}. Although $\partial\Omega$ is not invariant, the theorem from \cite{butler_1986}, as explained in \cite{BUTLER1987781}, can be modified so that it applies in the case when $\partial\Omega$ is the union of two sets $\Omega_1$ and $\Omega_2$, for which $\mathcal{F}$ is invariant on $\Omega_1$ and $\Omega_2$ is repelling into the interior of $\Omega$, provided that conditions $3.$ and $4.$ are satisfied for the restriction of $\mathcal{F}$ to $\Omega_1$. In our case, the positively invariant set $\Omega$, on which we analyze system \eqref{eq:system_reduced} is bounded, hence condition $1.$ is satisfied. Condition $2.$ holds by theorem \ref{thm:persistence1} (persistence implies weak persistence). In our context condition 3. is satisfied, because all the boundary equilibria are hyperbolic, and hence each one is the maximal invariant set in a neighbourhood of itself. Also, their union forms a covering of the omega limit sets of $\Omega_1$. Condition 4. is satisfied because the boundary equilibria are not cyclically linked, i.e., there is no cyclic chain of heteroclinic orbits joining them. Thus, we have shown not only persistence, but also uniform persistence of system \eqref{eq:system_reduced} in the case of theorem \ref{thm:persistence1} and theorem \ref{thm:persistence2}.
\end{remark}

We have thus proven the following theorem:

\begin{theorem}\label{thm:uniform_persistence}
Under the hypotheses of theorems \ref{thm:persistence1} and \ref{thm:persistence2}, system \eqref{eq:system_reduced} is uniformly persistent.
\end{theorem}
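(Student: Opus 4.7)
The plan is to apply the abstract uniform persistence theorem of \cite{butler_1986}, as adapted in \cite{BUTLER1987781}, to the flow $\mathcal{F}$ generated by system \eqref{eq:system_reduced} on the positively invariant, compact set $\Omega$. Four conditions must be verified: dissipativity, weak persistence, isolatedness of the boundary flow, and acyclicity of the boundary flow.

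First I would check dissipativity. Lemma \ref{lemma:pos_bound} already established that all positive solutions are bounded and that $\Omega$ is a positively invariant attractor, so $\omega(x)\neq\emptyset$ for every $x\in\Omega$ and $\bigcup_{x\in\Omega}\omega(x)\subset\Omega$ has compact closure. Weak persistence is immediate: under the hypotheses of Theorem \ref{thm:persistence1} or Theorem \ref{thm:persistence2} we have $\liminf_{t\to\infty}x_i(t)>0$ for $i=0,1,2$, which a fortiori gives $\limsup_{t\to\infty}x_i(t)>0$.

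Next I would address the fact that $\partial\Omega$ is not itself invariant under $\mathcal{F}$. I would decompose $\partial\Omega=\Omega_1\cup\Omega_2$, where $\Omega_1$ is the union of the three invariant coordinate faces $\{x_0=0\}$, $\{x_1=0\}$, $\{x_2=0\}$ intersected with $\Omega$, and $\Omega_2$ is the remainder of $\partial\Omega$, which (by the positivity arguments in Lemma \ref{lemma:pos_bound}) is repelling into the interior of $\Omega$. This reduces the verification of conditions 3 and 4 to the restricted flow $\mathcal{F}|_{\Omega_1}$. The boundary equilibria $\prescript{}{(000)}{\mathcal{E}}$, $\prescript{}{(100)}{\mathcal{E}}$, $\prescript{}{(001)}{\mathcal{E}}$, $\prescript{}{(110)}{\mathcal{E}}$ (and $\prescript{}{(011)}{\mathcal{E}}$ in the setting of Theorem \ref{thm:persistence2}) are, by hypothesis, all hyperbolic saddles; hence each one is isolated as an invariant set, and Subsection \ref{subsec:faces} rules out periodic orbits in the three faces. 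Consequently the omega limit set of any orbit in $\Omega_1$ is the union of these finitely many equilibria, giving condition 3.

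Finally I would verify acyclicity by going through the finite list of boundary equilibria and reading off their stable and unstable manifolds from the Jacobian computations and the face-invariance analysis already used in the proofs of Theorems \ref{thm:persistence1} and \ref{thm:persistence2}. Using the schematic pictures in Figures \ref{fig:persist_schem1} and \ref{fig:persist_schem2}, one checks that the heteroclinic connections inside $\Omega_1$ flow in one direction only (for instance from $\prescript{}{(000)}{\mathcal{E}}$ outward along each axis, and from the axis equilibria to the planar equilibria), so no cycle of heteroclinic orbits among boundary equilibria exists. The main obstacle is really just this bookkeeping step: one must argue that the combination of face-invariance, the Dulac-type exclusion of periodic orbits in Subsection \ref{subsec:faces}, and the prescribed indices of stable/unstable manifolds in the hypotheses of Theorems \ref{thm:persistence1} and \ref{thm:persistence2} leaves no room for a heteroclinic cycle. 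With the four conditions verified, the cited theorem delivers uniform persistence, i.e., a common $\varepsilon>0$ such that $\liminf_{t\to\infty}x_i(t)\geq\varepsilon$ for $i=0,1,2$, completing the proof.
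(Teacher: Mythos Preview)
Your proposal is correct and follows essentially the same approach as the paper: both invoke the abstract uniform persistence theorem of \cite{butler_1986} (as adapted in \cite{BUTLER1987781}), verify dissipativity from the compactness of $\Omega$, weak persistence from Theorems \ref{thm:persistence1} and \ref{thm:persistence2}, handle the non-invariant portion of $\partial\Omega$ by the $\Omega_1\cup\Omega_2$ decomposition, and check isolatedness and acyclicity from the hyperbolicity of the boundary equilibria and the absence of heteroclinic cycles. Your write-up is in fact slightly more explicit than the paper's in justifying the covering condition via the Dulac arguments of Subsection \ref{subsec:faces}.
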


We finish this subsection by extending the uniform persistence to the original six dimensional system \eqref{eq:system_scaled}. Notice that if $\Vec{X_0}=\left(x_0(0),x_1(0),x_2(0),s_0(0),s_1(0),s_2(0)\right)$ with $x_i(0)\geq0$, $s_i(0)\geq0$, $i=1,2,3$, then we necessarily must have $\omega\left(\Vec{X_0}\right)\in\Omega$. Otherwise, there would exist a point $\Vec{p}\in\mathbb{R}_+\setminus \Omega$ and a sequence of times $\left(t_n\right)$ with $t_n\rightarrow\infty$ for which the corresponding solution converges to $\Vec{p}$. This would mean that $\Omega$ is not globally attracting, which was proven in section \ref{sec:reduction}. Also, if $\omega\left(\Vec{X_0}\right)$ has a point lying in a face with one of the $x_i$, $i=1,2,3$ equal zero, then the entire orbit through that point would also be in $\omega\left(\Vec{X_0}\right)$. Thus, if the assumptions of either theorem \ref{thm:persistence1} or theorem \ref{thm:persistence2} hold, the omega limit set $\omega\left(\Vec{X_0}\right)$ is entirely contained in the interior of $\Omega$. We have thus proven the following theorem:

\begin{theorem}\label{thm:uniform_persistence_6_d_system}
Under the hypotheses of theorems \ref{thm:persistence1} and \ref{thm:persistence2}, the six dimensional system \eqref{eq:system_scaled} is uniformly persistent.
\end{theorem}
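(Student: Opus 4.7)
The approach I would take is to lift the uniform persistence of the reduced three-dimensional system (Theorem~\ref{thm:uniform_persistence}) to the full six-dimensional system, using $\Omega$ as the bridge: $\Omega$ is a compact, forward-invariant, globally attracting 3-manifold on which \eqref{eq:system_scaled} and \eqref{eq:system_reduced} coincide via the conservation laws \eqref{eq:cons_laws}. The discussion immediately preceding the theorem already supplies the biomass ($x_i$) half of the conclusion; what is still needed is a uniform lower bound on each substrate coordinate $s_0, s_1, s_2$, with a single constant $\delta$ valid for every initial condition.

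First I would fix any $\Vec{X_0}$ with strictly positive components. Lemma~\ref{lemma:pos_bound} gives positivity and boundedness of the solution, and the conservation identities \eqref{eq:cons_law_1}--\eqref{eq:cons_law_3} imply $\mathrm{dist}(\Vec{X}(t), \Omega) \to 0$, so $\omega(\Vec{X_0}) \subseteq \Omega$. Since \eqref{eq:system_scaled} restricted to $\Omega$ is, via \eqref{eq:cons_laws}, the same dynamical system as \eqref{eq:system_reduced}, Theorem~\ref{thm:uniform_persistence} furnishes a constant $\epsilon > 0$ (independent of $\Vec{X_0}$) with $\liminf_{t \to \infty} x_i(t) \geq \epsilon$ for $i = 0, 1, 2$.

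Next I would rule out the three substrate faces $\{s_j = 0\}$ of $\Omega$ from $\omega(\Vec{X_0})$. Using the general growth-function assumptions $\mu_0(0,\cdot) = \mu_0(\cdot,0) = \mu_1(0,\cdot) = \mu_2(0) = 0$, evaluation of the $s_j$-equations in \eqref{eq:system_scaled} at a point of $\Omega \cap \mathbb{R}_+^6$ on such a face yields
\begin{align*}
s_0'\big|_{s_0 = 0} &= \alpha u_f, \\
s_2'\big|_{s_2 = 0} &= \alpha u_h + \omega_1 \mu_1(s_1, 0)\, x_1, \\
s_1'\big|_{s_1 = 0} &= \alpha u_g + \omega_0 \mu_0(s_0, s_2)\, x_0,
\end{align*}
each of which is strictly positive in the parameter regimes of Theorems~\ref{thm:persistence1} and \ref{thm:persistence2}: $u_f, u_h > 0$ in both, and the last is positive either because $u_g > 0$ (as in Theorem~\ref{thm:persistence2}) or, when $u_g = 0$ (as in Theorem~\ref{thm:persistence1}), because $s_0, s_2, x_0 > 0$ on $\omega(\Vec{X_0})$ by the previous two steps and Theorem~\ref{thm:uniform_persistence}. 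If a point $\Vec{p} \in \omega(\Vec{X_0})$ had $s_j(\Vec{p}) = 0$, then flow-invariance of $\omega(\Vec{X_0})$ in backward time would produce nearby points of $\omega(\Vec{X_0})$ with $s_j < 0$, contradicting $\omega(\Vec{X_0}) \subseteq \mathbb{R}_+^6$. Hence $\omega(\Vec{X_0})$ is contained in the relative interior of $\Omega$.

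Finally, to convert these pointwise conclusions into a uniform lower bound, I would invoke the standard result that a dissipative, uniformly persistent flow on a compact set admits a compact global attractor lying in the relative interior of the feasible state space. Applied to the reduced flow on the compact set $\Omega$, this yields a compact attractor $\mathcal{A} \subset \Omega$; the previous paragraph forces $\mathcal{A}$ to miss every substrate face, so the continuous functions $s_j$ attain a common positive minimum $\delta > 0$ on $\mathcal{A}$. Attraction of every orbit to $\mathcal{A}$ (through the attraction to $\Omega$) then gives $\liminf_{t \to \infty} s_j(t) \geq \delta$ with $\delta$ independent of $\Vec{X_0}$, and together with the $\epsilon$ from step two this is the claimed uniform persistence. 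The point I expect to be most delicate is precisely this upgrade to a single $\delta$ valid for all orbits; the pointwise argument alone provides only $\liminf s_j > 0$ for each fixed orbit, and it is the compact-global-attractor machinery that produces uniformity.
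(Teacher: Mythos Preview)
Your approach is essentially the paper's: show $\omega(\Vec{X_0})\subseteq\Omega$ via the conservation identities, then use the uniform persistence of the reduced system on $\Omega$ to exclude the $\{x_i=0\}$ faces from $\omega(\Vec{X_0})$. That is precisely the content of the paragraph the paper presents as its proof.

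Where you diverge is in scope. The paper's definition of (uniform) persistence is stated only for Kolmogorov-type systems $x_i'=x_i f_i(\cdot)$, and its proof accordingly treats persistence of the six-dimensional system as a statement about the species coordinates $x_0,x_1,x_2$ only: it stops after arguing that $\omega(\Vec{X_0})$ avoids the faces $\{x_i=0\}$ and says nothing about the substrate faces or about an explicit uniform constant. Your proposal goes further in two ways: (i) you also exclude the faces $\{s_j=0\}$ by the strict-positivity argument for $s_j'$ on those faces (correct, and the ordering $s_0,s_2$ before $s_1$ is exactly what is needed when $u_g=0$), and (ii) you upgrade the pointwise conclusion to a single $\delta>0$ via the interior compact attractor that uniform persistence of the reduced flow supplies. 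Both additions are sound; they simply prove more than the paper claims. One small caution: in your step~2 the sentence ``Theorem~\ref{thm:uniform_persistence} furnishes a constant $\epsilon>0$ \ldots with $\liminf x_i(t)\ge\epsilon$'' is not an immediate consequence of that theorem, since the six-dimensional orbit is not on $\Omega$; you are tacitly using the preceding paragraph of the paper (or, equivalently, the asymptotically-autonomous / chain-transitive machinery) to transfer the bound from the limiting system to the full one. You acknowledge this dependence at the outset, but the phrasing in step~2 makes it sound self-contained.
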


\subsection{Bifurcation diagrams}\label{subsec:bif_diag}
As previously stated in section \ref{Hopf_bifurcation}, we now study numerically effects on the qualitative behaviour of system \eqref{eq:system_reduced} when considering $\alpha$ as the bifurcation parameter. Throughout this section, we assume that parameters $\omega_0$, $\omega_1$, $\omega_2$, $\phi_1$, $\phi_2$, $K_P$, and $K_I$ are fixed at the values given in Table \ref{table0}. 

We now fix the following parameters 
\begin{equation}
    u_f=2 , \quad u_g=0 , \quad u_h=0,
\end{equation}
and plot a one-parameter bifurcation diagram in $\alpha$, with $x_0$ on the $y$-axis. All simulations were performed using \cite{xpp}.

\begin{figure}[H]
    \centering
    {\includegraphics[width=0.9\textwidth,height=0.7\textwidth]{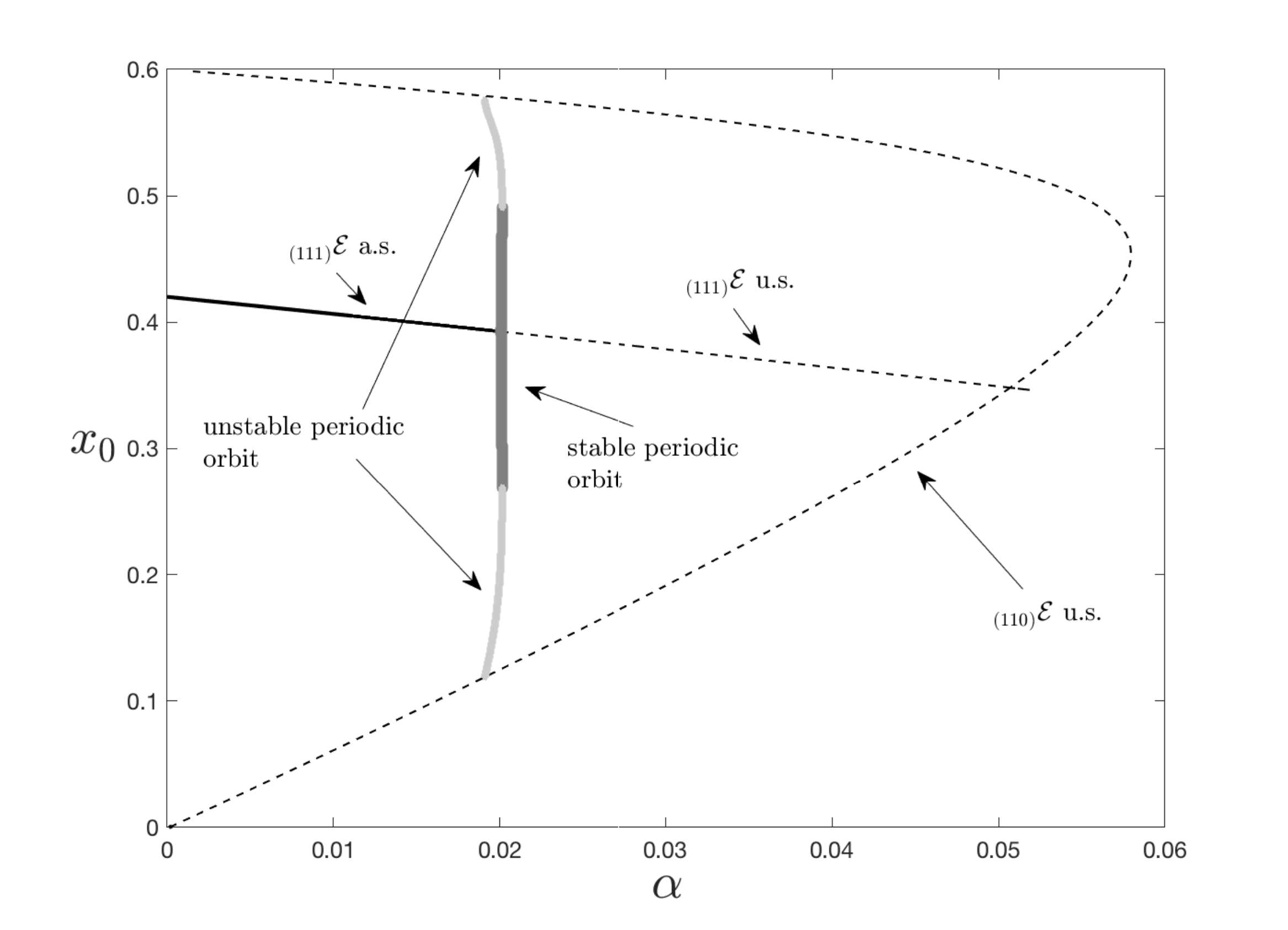}}
    \caption{One-parameter bifurcation diagram of system \eqref{eq:system_reduced} with $\alpha$ as the bifurcation parameter and $u_f=2$, $u_g=0$, $u_h=0$.}
    \label{fig:bif_diag_alph_x0}
\end{figure}

\begin{figure}[H]
    \centering
    {\includegraphics[width=0.9\textwidth,height=0.7\textwidth]{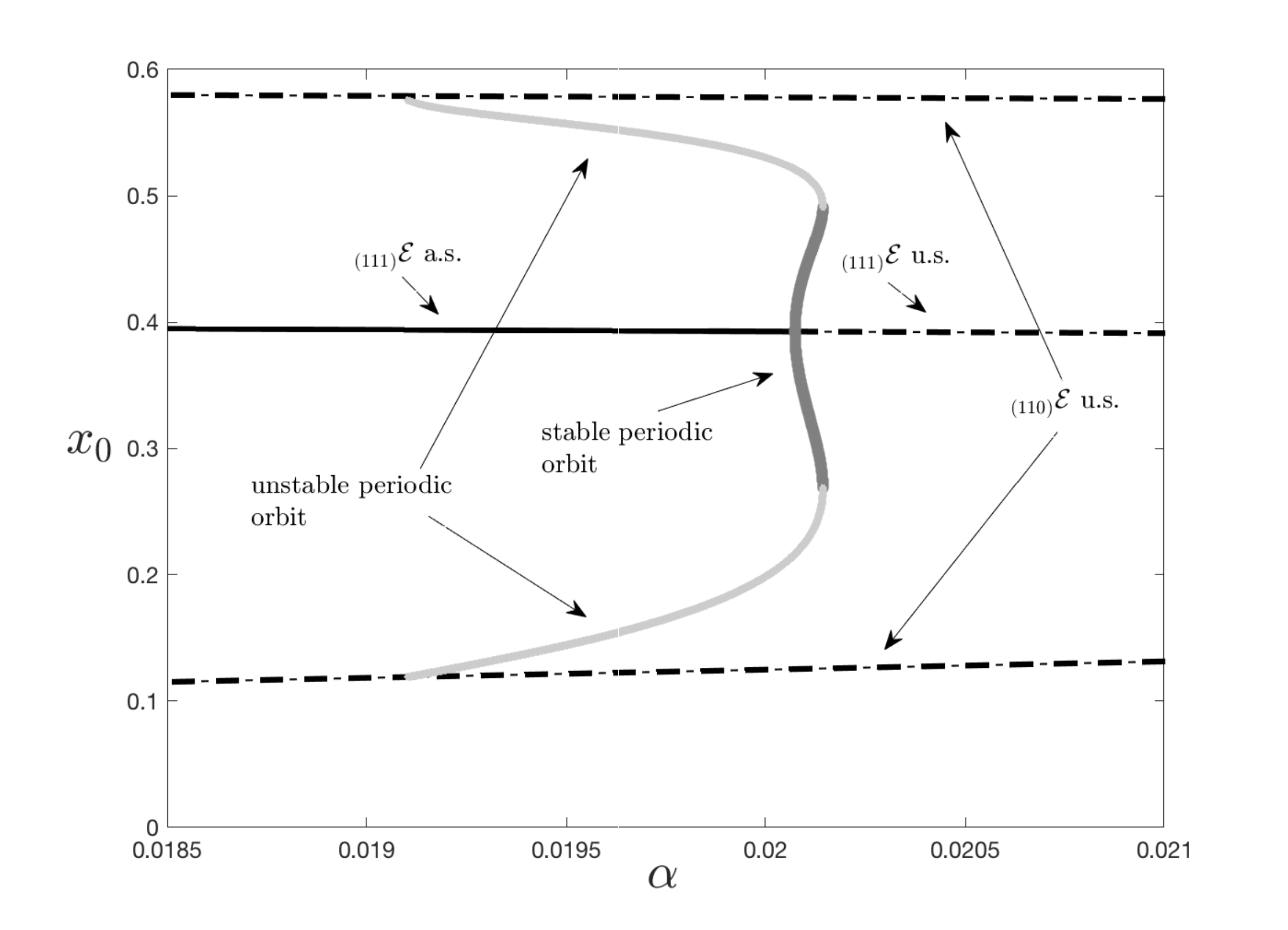}}
    \caption{Close-up of on the one-parameter bifurcation diagram represented in Figure \ref{fig:bif_diag_alph_x0}.}
    \label{fig:bif_diag_alph_x0_zoom}
\end{figure}

We can see that as $\alpha$ decreases, there is a saddle-node bifurcation, resulting in two equilibria $\prescript{}{\left(110\right)}{\mathcal{E}}^{\left(1\right)}$ and $\prescript{}{\left(110\right)}{\mathcal{E}}^{\left(2\right)}$ appearing (both unstable). Next, there is a transcritical bifurcation with the $\prescript{}{\left(110\right)}{\mathcal{E}}^{\left(1\right)}$ equilibrium, which results in the positive equilibrium coming into the interior of the admissible region $\Omega$. After that, a saddle-node of limit cycles bifurcation occurs, which gives birth to a stable and unstable periodic orbits. The $\prescript{}{\left(111\right)}{\mathcal{E}}$ equilibrium (unstable), undergoes a Hopf bifurcation, and as a consequence it becomes asymptotically stable, and the stable periodic orbits disappears. Since these bifurcations occur for a narrow range of $\alpha$, a close-up is presented in Figure \ref{fig:bif_diag_alph_x0_zoom}. Stable periodic orbit represents a case in which all three populations oscillate indefinitely, and hence the production of methane fluctuates. As already mentioned in section \ref{Hopf_bifurcation}, this situation is not a desirable one, because it might result it decreased productivity of the biogas plant. The unstable periodic orbits acts as a separatrix, giving the border of the basin of attraction of two asymptotically stable equilibria in the case of bistability. 

Since by the conservation principles \eqref{eq:cons_laws}, $s_0=u_f-x_0$, the bifurcation diagram in $\alpha$ with $s_0$ on the $y$-axis is similar to the one presented in Figure \ref{fig:bif_diag_alph_x0}. The amount of chlorophenol in the system is inversely proportional to the concentration of the phenol degrader. As the dilution rate $\alpha$ decreases, concentration of the chlorophenol degrader in the interior equilibrium increases, and as a consequence, the amount of chlorophenol decreases. It thus suggests, that operating on lower dilution rates results in the most desirable dynamics, i.e., an asymptotically stable interior equilibrium and fast chlorophenol removal.

To extend the previous analysis, we now fix the following parameters
\begin{equation}
    u_g = 0, \quad u_h = 0.1,
\end{equation}
and plot a two-parameter bifurcation diagram of system \eqref{eq:system_reduced}, choosing $\alpha$ and $u_f$ as the bifurcation parameters. Each region of the diagram is labeled and the corresponding dynamics are represented schematically in figures around it. Black dashed curve corresponds to saddle-node of equilibria bifurcation (LP), black solid curve represents saddle-node of limit cycles bifurcation (SNLC), black dotted curve denotes Hopf bifurcation (HB), and grey solid curves represent transcritical bifurcations (BP). We also denote the predicted heteroclinic bifurction by a grey dashed curve which lies very close to the Hopf curve.



\begin{figure}[H]
\centering
    \includegraphics[angle=90,trim={1cm 1cm 3cm 1cm},clip]{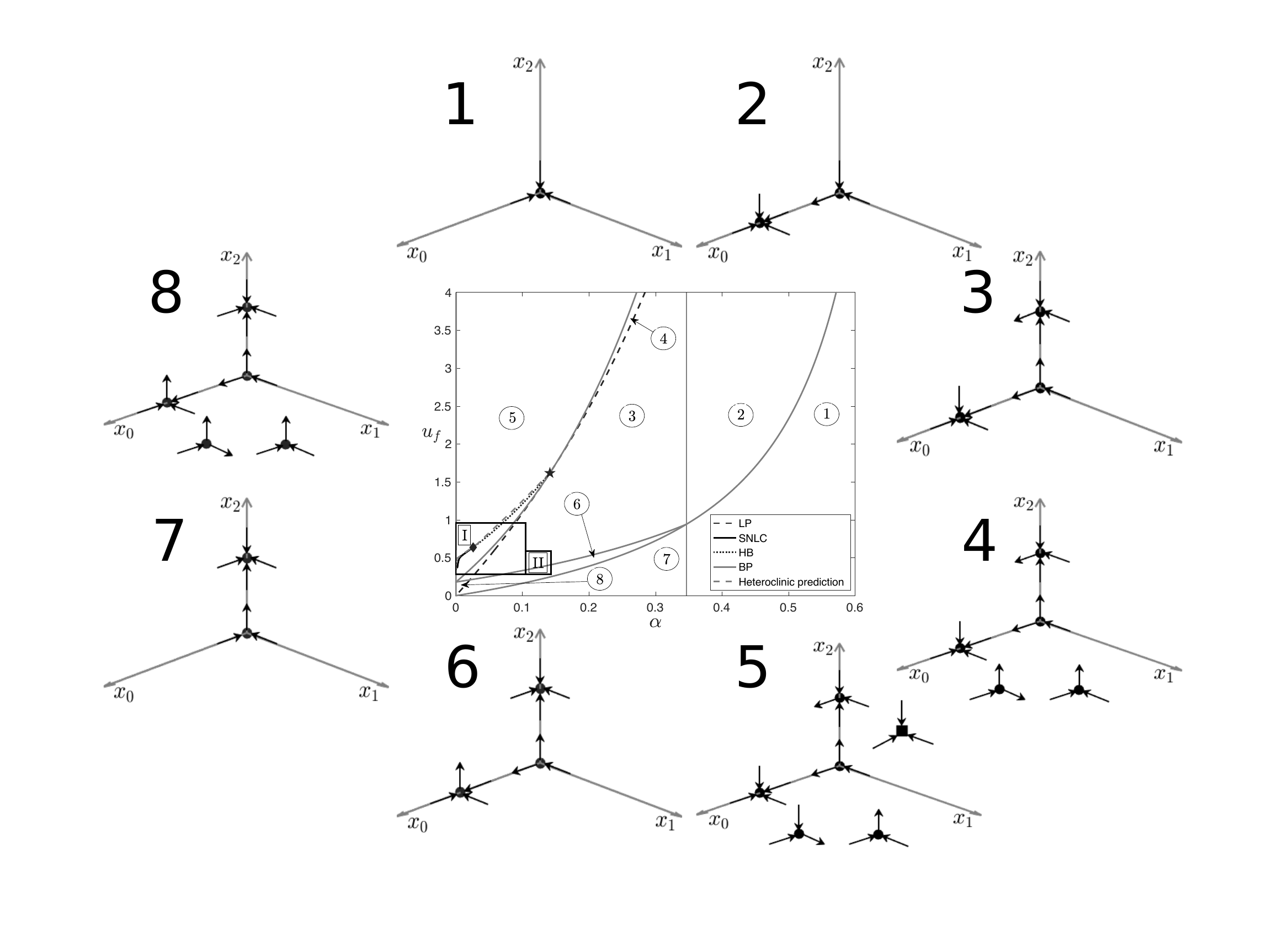}
    \caption{Two-parameter bifurcation diagram of system \eqref{eq:system_reduced} with $u_g=0$ and $u_h=0.1$.}
    \label{fig:2par_bif_diag_full}
\end{figure}

\begin{figure}[H]
    \centering
    {\includegraphics[angle=90, trim={2cm 1cm 2cm 1cm},clip]{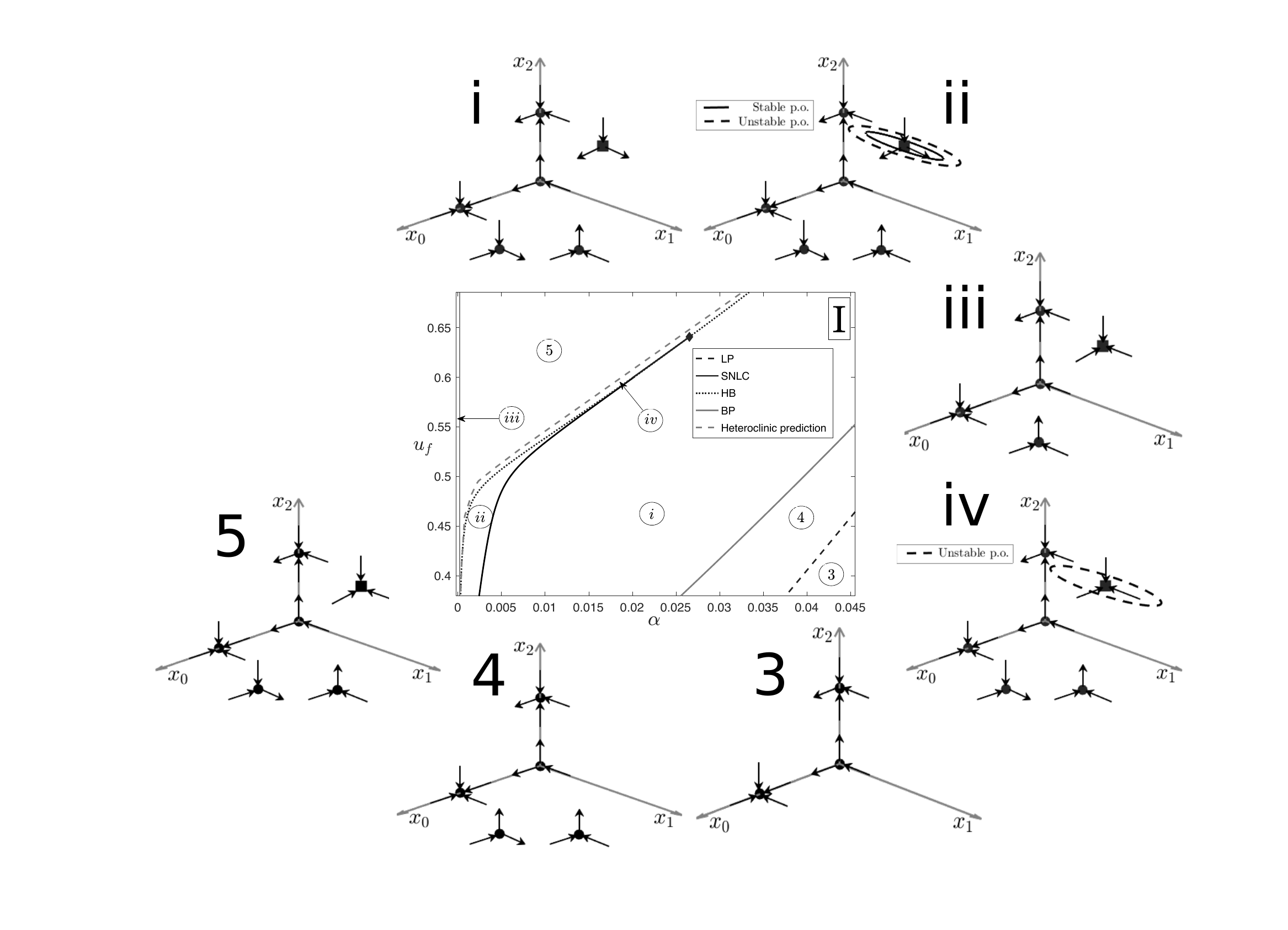}}
    \caption{Close-up on region I of Figure \ref{fig:2par_bif_diag_full}. The SNLC curve intersects the HB curve at Bautin bifurcation. This results in the change of criticality of the Hopf bifurcation from supercritical (on the left) to subcritical (on the right).}
    \label{fig:2par_bif_diag_zoom_I}
\end{figure}

\begin{figure}[H]
    {\includegraphics[angle=90, trim={2cm 2cm 1cm 0cm},clip]{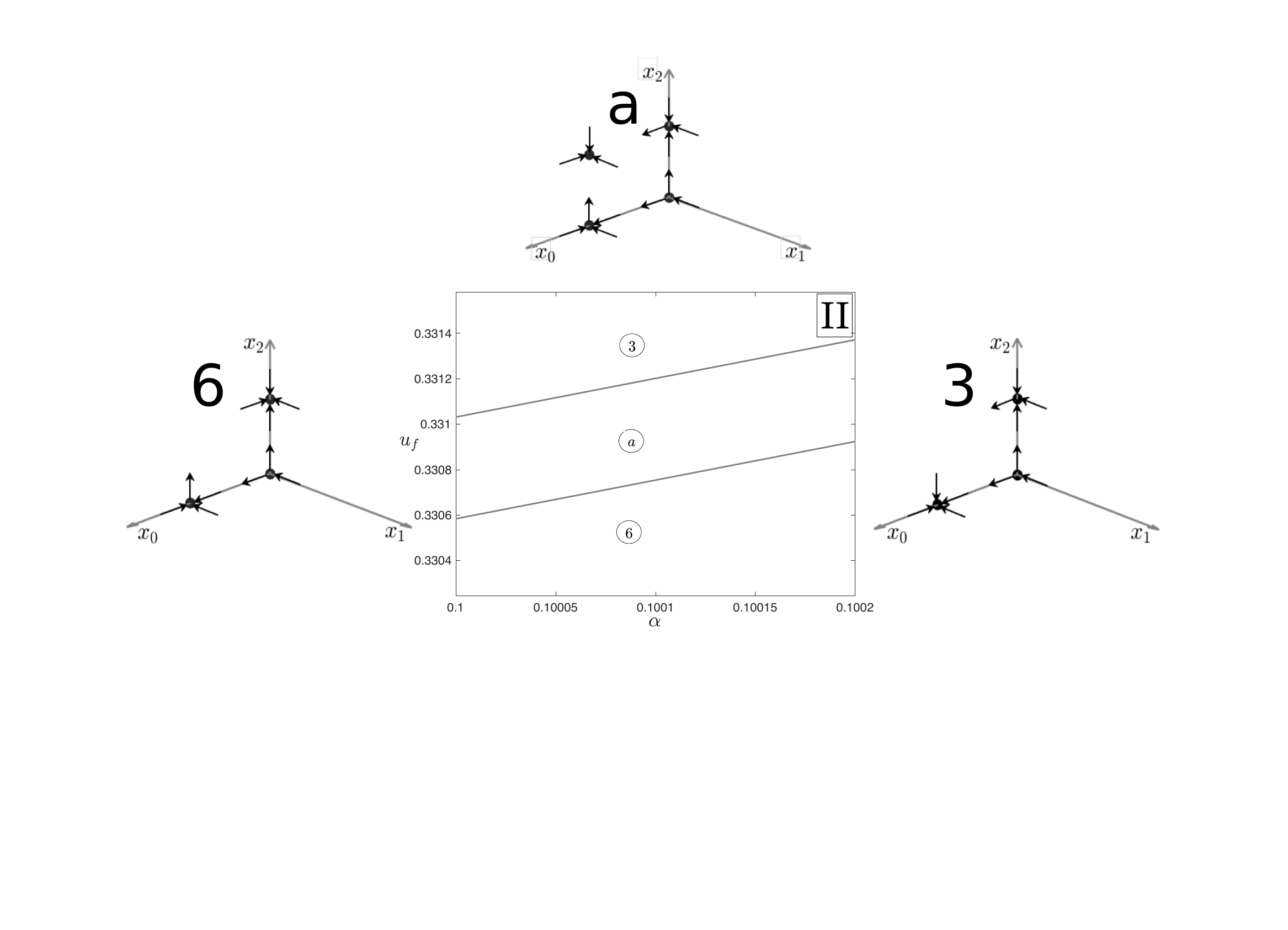}}
    \caption{Close-up on region II of Figure \ref{fig:2par_bif_diag_full}.}
    \label{fig:2par_bif_diag_zoom_II}
\end{figure}

We can see that varying two parameters at the same time can lead to a much more complicated dynamics than in the case of one-parameter bifurcations. There is a generalized Hopf bifurcation, at the point at which the Hopf curve intersects the saddle-node of limit cycles curve. This is the point where the criticality of the Hopf bifurcation changes from supercritical to subcritical, looking from left to right. The unstable periodic orbit disappears through a heteroclinic bifurcation. There are two heteroclinic orbits that form a cycle that joins the two equilibria in the $x_0x_1$ face, then passes into the interior, and then goes back to the boundary in the $x_0x_1$ face. The point at which the Hopf, homoclinic, and saddle-node of limits cycles curves intersect, represents the Bogdanov-Takens bifurcation.

From the biological viewpoint, the most interesting dynamics is occurs in regions $5$ and $iii$. There, the interior equilibrium is asymptotically stable. In the case of region $5$ we also observe bistability with the $\prescript{}{\left(100\right)}{\mathcal{E}}$ equilibrium. In region $iii$, there is uniform persistence, and thus an interior compact attractor is present. As was previously anticipated by the analysis of the one-parameter bifurcation diagram, operating at low dilution rates is the most desirable approach. If $\alpha$ is small enough, it is possible to remain in region $iii$, even for high inflow rate $u_f$.

\section{Conclusions}

In this work we have generalised the approach presented in \cite{sari2017} by including multiple substrate inflow into the chemostat, while maintaining generality (in most cases) with respect to the exact form of the growth functions. We observed that allowing the inflow of multiple substrates resulted in much more complex dynamics of the system. For example, eight steady states are possible. Previously, the theoretical results were limited to existence and uniqueness of up to three equilibria (when chlorophenol was the only input substrate), and to numerical evidence suggesting that the model should be subjected to a more detailed analysis. We also observed that external addition of substrates can result in bistability - two equilibria can simultaneously be asymptotically stable. As well, there can be an orbitally asymptotically stable periodic orbit  with all of the populations surviving and an asymptotically stable equilibrium with only chlorophenol degrader population surviving. 

We have also confirmed the findings of the previous analysis in \cite{sari2017}, where numerical evidence of the occurrence of a supercritical Hopf bifurcation was given. Theoretical conditions for the existence of a Hopf bifurcation were provided in the case of specific forms of the growth functions. Varying any one of the three parameters:  chlorophenol, phenol, and hydrogen inflow rates, was shown to result in a Hopf bifurcation. Theoretical results for varying the dilution rate as the bifurcation parameter has been left for future work. However, we have observed numerically, that varying this parameter can result in a Hopf bifurcation and a saddle-node of limit cycles bifurcation. Our numerical investigations also showed that increasing the inflow rate of the substrates has a stabilizing effect on the entire system. From a biological engineering point of view, i.e. a bioreactor treating a monochlorophenol rich waste stream, instability would typically be undesirable in terms of process performance. Therefore, identification of control strategies to avoid periodic behaviour is an important output of this work.

Another result, particularly important for engineering applications, concerns the persistence of the system for a range of parameter sets. Knowing when the microbial populations survive is again crucial from a process control perspective, and it is one of the main theoretical results of this work. We have proven that in two configurations of equilibria (in both cases all the boundary equilibria are saddle points) we observe not only persistence, but also uniform persistence, a much stronger result. These situations occur when there is an inflow of all three substrates, but also when phenol addition is not considered (i.e., when $u_g=0$).

Although we now know much more about the dynamics of the system, it is not fully understood. This follows from the numerical results provided by the two-parameter bifurcation diagrams. The analyses reveals that varying the dilution rate and the chlorophenol inflow simultaneously, can lead to a Bogdanov-Takens, or Bautin (generalized Hopf) bifurcations. Also, for the cases of bistability, where both a boundary and the interior equilibrium are asymptotically stable, it is of great importance to empiricists to have an estimation of the basins of attraction of these equilibria. This result is usually difficult to obtain theoretically, however numerical estimations are possible. Another factor that is of interest would be the inclusion of stochasticity in the model. In practice, even if the interior equilibrium is globally asymptotically stable, one of the microorganisms may become extinct. This might occur when a population is very small, and the stochastic noise effects result in the solution curve reaching one of the invariant faces of the admissible region.

There has been resistance to the idea that simplified models, of the type described here, are too remote from the systems they represent to be of worth to practitioners. Without experimental results to compare against model predictions, this case becomes stronger. However, we can look to emerging disciplines such as synthetic biology to help bridge the theoretical and the applied \cite{elkaroui19}. Recent studies have shown that synthetically derived anaerobic communities are able to confirm model predictions and provide insight into the ecology and dynamics of microbial communities that are relevant in practice \cite{delattre19}. We believe this work provides a basis by which experimental studies describing a chlorophenol mineralising food-web could be undertaken.

\section{Acknowledgments}

M.J.W. acknowledges the support from the European Union's Horizon 2020 research and innovation programme under the Marie Skłodowska-Curie Grant Agreement No. 702408 (DRAMATIC).

\bibliographystyle{siam}
\bibliography{references}

\begin{thebibliography}{10}

\bibitem{bassani2015}
{\sc I.~Bassani, P.~G. Kougias, L.~Treu, and I.~Angelidaki}, {\em Biogas
  upgrading via hydrogenotrophic methanogenesis in two-stage continuous stirred
  tank reactors at mesophilic and thermophilic conditions}, Environ. Sci.
  Technol., 49 (2015), pp.~12585--12593.

\bibitem{butler_1986}
{\sc G.~J. Butler, H.~I. Freedman, and P.~Waltman}, {\em Uniformly persistent
  systems}, Proceedings of the American Mathematical Society, 96 (1986),
  pp.~425--430.

\bibitem{BUTLER1987781}
{\sc G.~J. Butler and G.~S.~K. Wolkowicz}, {\em Predator — mediated
  coexistence in a chemostat: Coexistence and competition reversal},
  Mathematical Modelling, 8 (1987), pp.~781 -- 785.

\bibitem{chen2019}
{\sc J.~Chen, M.~J. Wade, J.~Dolfing, and O.~S. Soyer}, {\em Increasing sulfate
  levels show a differential impact on synthetic communities comprising
  different methanogens and a sulfate reducer}, J. Royal Soc. Interface, 16
  (2019), p.~20190129.

\bibitem{delattre19}
{\sc H.~Delattre, J.~Chen, M.~Wade, and O.~Soyer}, {\em Thermodynamic modelling
  of synthetic communities predicts minimum free energy requirements for
  sulfate reduction and methanogenesis}, bioRxiv,  (2019), p.~857276.

\bibitem{elhajji2017}
{\sc M.~{El Hajji}, N.~Chorfi, and M.~Jleli}, {\em Mathematical modelling and
  analysis for a three-tiered microbial food web in a chemostat}, Electron. J.
  Differ. Eq., 255 (2017), pp.~1--13.

\bibitem{elkaroui19}
{\sc M.~{El Karoui}, M.~{Hoyos-Flight}, and L.~Fletcher}, {\em {Future Trends
  in Synthetic Biology — A Report}}, Front. Bioeng. Biotechnol., 7 (2019),
  p.~175.

\bibitem{grosskopf16}
{\sc T.~Gro{\ss}kopf and O.~Soyer}, {\em Microbial diversity arising from
  thermodynamic constraints}, ISME J., 10 (2016), pp.~2725--2733.

\bibitem{kuzn}
{\sc Y.~Kuznestov}, {\em Elements of Applied Bifurcation Theory}, vol.~112,
  Springer-Verlag, New York, 2004.

\bibitem{leven2012}
{\sc L.~Lev\'{e}n, K.~Nyberg, and A.~Schn\:{u}rer}, {\em Conversion of phenols
  during anaerobic digestion of organic solid waste – a review of important
  microorganisms and impact of temperature}, J. Env. Manage., 95 (2012),
  pp.~99--103.

\bibitem{Maple:2018}
{\sc Maple}, {\em version 18.02}, Waterloo Maple Inc., Waterloo, Ontario, 2018.

\bibitem{MATLAB:2018}
{\sc MATLAB}, {\em version 9.5.0.944444 (R2018b)}, The MathWorks Inc., Natick,
  Massachusetts, 2018.

\bibitem{mazur2003}
{\sc C.~Mazur, W.~Jones, and C.~{Tebes-Stevens}}, {\em H$_2$ consumption during
  the microbial reductive dehalogenation of chlorinated phenols and
  tetrachloroethene}, Biodegradation, 14 (2003), pp.~285--295.

\bibitem{sari2017}
{\sc T.~Sari and M.~J. Wade}, {\em {Generalised approach to modelling a
  three-tiered microbial food-web}}, Mathematical Biosciences, 291 (2017),
  pp.~21--37.

\bibitem{schink97}
{\sc B.~Schink}, {\em Energetics of syntrophic cooperation in methanogenic
  degradation}, Microbiol. Mol. Biol. Rev., 61 (1997), pp.~262--280.

\bibitem{smith_waltman_1995}
{\sc H.~L. Smith and P.~Waltman}, {\em The Theory of the Chemostat: Dynamics of
  Microbial Competition}, Cambridge Studies in Mathematical Biology, Cambridge
  University Press, 1995.

\bibitem{smith2019}
{\sc N.~W. Smith, P.~R. Shorten, E.~H. Altermann, N.~C. Roy, and W.~C.
  {McNabb}}, {\em Hydrogen cross-feeders of the human gastrointestinal tract},
  Gut Microbes, 10 (2019), pp.~270--288.

\bibitem{wade2016}
{\sc M.~J. Wade, R.~W. Pattinson, N.~G. Parker, and J.~Dolfing}, {\em {Emergent
  behaviour in a chlorophenol-mineralising three-tiered microbial 'food web'}},
  Journal of Theoretical Biology, 389 (2016), pp.~171--186.

\bibitem{xpp}
{\sc XPPAUT}, {\em version 8.0}, Dr. Bard Ermentrout, Dept of Mathematics,
  University of Pittsburgh, Pittsburgh PA, 2016.

\end{thebibliography}

\end{document}